\newtheorem{definition}{Definition}[section]
\newtheorem{theorem}{Theorem}[section]
\newtheorem{lemma}[theorem]{Lemma}
\newtheorem{proposition}[theorem]{Proposition}
\theoremstyle{remark}
\newtheorem{remark}[theorem]{Remark}
\newcommand{\ZZ}{\mathbb{Z}}
\newcommand{\NN}{\mathbb{N}}
\newcommand{\RR}{\mathbb{R}}
\DeclareMathOperator{\Div}{div}
\begin{document}
\title[Minimal blow-up data for NS in critical Fourier-Herz space ]{Minimal blow-up initial data in critical Fourier-Herz \\spaces for potential Navier-Stokes singularities }

\author[J. Li, C. Miao and  X. Zheng]{Jingyue Li, Changxing Miao and Xiaoxin Zheng}

\address{The Graduate School of China Academy of Engineering Physics, Beijing 100088, P.R. China}
\email{m\_lijingyue@163.com}

\address{ Institute of Applied Physics and Computational Mathematics, P.O. Box 8009, Beijing 100088, P.R. China.}
\email{miaochangxing@iapcm.ac.cn}
\address{ School of Mathematics and Systems Science, Beihang University, Beijing 100191, P.R. China}
\email{xiaoxinzheng@buaa.edu.cn}

\date{\today}
\keywords{Navier-Stokes equations, blow-up initial data, Localization in space, De Giorgi iteration.}

\begin{abstract}
In this paper, we mainly prove the existence of the minimal blow-up initial data in critical Fourier-Herz space $F\dot{B}^{2-{\frac3p}}_{p,q}(\RR^3)$ with $1<p\leq\infty$ and $1\leq q<\infty$ for the three dimensional  incompressible potential Navier-Stokes equations by developing  techniques of ``localization in space" involving the partial regularity given by the De Giorgi iteration, weak-strong uniqueness, the short-time behaviour of the kinetic energy and stability of singularity of Calder\'on's solution.
\end{abstract}

\maketitle
\section{Introduction}
In this paper, we consider  Cauchy problem of the three dimensional incompressible Navier-Stokes equations
\begin{equation}\label{NS}\tag{NS}
\left\{\begin{array}{ll}
\partial_t u-\nu\Delta u+u\cdot\nabla u+\nabla P=0\\
\Div u=0\\
u(x,0)=u_0,
\end{array}\right.
\end{equation}
where the vector field $u$  and the scalar function $P$  describe the velocity field and the associated pressure of fluid, respectively. And $\nu>0$ is the kinematic viscosity. The initial velocity $u_0$ satisfies $\Div u_0=0.$
It is well-known that problem \eqref{NS} has the natural scaling, that is, if $u$ is a solution of system \eqref{NS} with initial data $u_0$, then so is $u_{\lambda}$, for any $\lambda>0$,  associated with initial data $u_{0\lambda}$,  where
\begin{equation}\label{scall}
u_{\lambda}(x,t)\triangleq\lambda u(\lambda x,\lambda^2 t),\quad
u_{0\lambda}\triangleq\lambda u_0(\lambda x).
\end{equation}
We call Banach space $X$ is  \textit{critical space} if it's norm $\|\cdot\|_{X}$ is invariant under scaling~\eqref{scall},  for example, the Lebesgue space $L^3(\RR^3)$, homogeneous Sobolev space $\dot{H}^{{1\over 2}}(\RR^3)$, homogeneous Besov space $\dot{B}^{-1+{3\over p}}_{p,q}(\RR^3)$, homogeneous Fourier-Herz space $F\dot{B}^{2-{3\over p}}_{p,q}(\RR^3)$  and so on.

The incompressible Navier-Stokes equations has been studied by many researchers in the past years. Especially, there are many interesting results concerning the mild solution $u$ which  solves the following integral equations:
\[u(x,t)=e^{\nu t\Delta}u_0+\int^t_0 e^{\nu(t-s)\Delta}\mathbb{P}(u\cdot\nabla u)\,\mathrm{d}s,\]
where $\mathbb{P}$ is the Leray projector. For the separable critical space, Fujita and Kato \cite{FK64} firstly established the local well-posedness for the large initial data in $\dot{H}^{1/2}(\RR^3)$, as well as the global well-posedness for small initial data in $\dot{H}^{{1/2}}(\RR^3)$. Later on, Kato \cite{Ka84} and Cannone \cite{Can97,Can04} generalized such well-posed theory to $L^3(\RR^3)$ and $\dot{B}^{-1+{3\over p}}_{p,q}(\RR^3)$ with $1\leq p,q<\infty$, respectively.  For the inseparable critical spaces, there only exists the global well-posed theory for the small initial data by now. Cannone  \cite{Can97,Can04} proved the global well-posedness to \eqref{NS} for small data in $\dot{B}^{-1+{3\over p}}_{p,\infty}(\RR^3)$ with $3<p<\infty$, and  Koch and Tataru \cite{KT01} showed the global well-posedness to system \eqref{NS} for small initial data in $\rm BMO^{-1}(\RR^3)$. Since Besov space $\dot{B}^{-1+\frac3p}_{p,\infty}$ with $p\in(3,\infty)$ and $\rm BMO^{-1}(\RR^3)$  contain non-trivial homogenous functions of degree $-1$, the both results mean that the  global existence and uniqueness of the small self-similar solution. But the local well-posedness for large initial data in the inseparable critical space remains an open problem.

Let $X$ be a separable critical space. Denoted by  $NS(u_0)$ the local in time mild solution to \eqref{NS} starting from $u_0\in X$, ones  consider the following question based on the above local well-posed theory:

\textbf{Question:}

\emph{Suppose there exist a initial data $u_0$ in the separable critical space $X$ such that the maximal existence time of $NS(u_0)$ is finite. Then does there exist a minimal blow-up initial data $v_0\in X$, that is, does there exist $v_0\in X$ with minimal norm such that the maximal existence time of $NS(v_0)$ is finite?}

Rusin and \v{S}ver\'{a}k \cite{RS11} firstly considered this question in the space $\dot{H}^{{1\over 2}}(\RR^3)$ and gave a positive answer. The main ingredients of their proof consist of the singularities of local mild solution inside the ball at the maximal existence time, the stability of singularities and the compactness of  $\dot{H}^{{1/2}}(\RR^3)\hookrightarrow L^2_{\rm loc}(\RR^3)$. Later, Gallagher, Koch and Planchon in \cite{GKP13,GKP16} utilized  concentration compactness argument and profile decomposition to give a unified affirmative answer to this question in $\dot{H}^{{1/2}}(\RR^3)$, $L^3(\RR^3)$ and $\dot{B}^{-1+{3\over p}}_{p,q}(\RR^3)$ with $3<p,q<\infty$. Especialy for $L^3(\RR^3)$, Jia and \v{S}ver\'{a}k in \cite{JS13} gave a simple proof of the existence of minimal blow-up initial data   by exploiting the regularity of energy solution in short time which compensates the lack of compactness of the embedding $L^3(\RR^3)\hookrightarrow L^2_{\rm loc}(\RR^3)$.

Recently, some well-posed results of problem \eqref{NS} for large initial data in Fourier-Herz spaces were established. For example, Cannone and Karch \cite{CK04} considered existence and uniqueness of singular solution in the space of pseudomeasure $F\dot{B}^{-1}_{\infty,\infty}(\RR^3)$. Lei and Lin \cite{LL11} proved an interesting global well-posed result to problem \eqref{NS} if the initial data belongs to the space
$$\chi^{-1} \triangleq\left \{ u\in \mathcal{D}'(\RR^3)\,\Big|\,\,\int_{\RR^3} |\xi|^{-1} |\hat{u}|(\xi)\,\mathrm{d}\xi<\infty\right\}$$ which coincides with  $F\dot{B}^{-1}_{1,1}(\RR^3)$, and the corresponding norm is bounded exactly by the viscosity coefficient $\nu$. Subsequently, Cannone and Wu \cite{CW12} extended this result to larger space $F\dot{B}^{-1}_{1,q}(\RR^3)$ for all $1\leq q\leq 2$. Recently, Li and Zheng \cite{LZ17} obtained the local well-posed result in critical Fourier Herz spaces $F\dot{B}^{2-{3\over p}}_{p,q}(\RR^3)$ with $q\neq \infty$ for the large initial data, and the global well-posed theory  in  $F\dot{B}^{2-{3\over p}}_{p,q}(\RR^3)$  for small initial data. A nature question arises: does there exist a blow-up initial data in the framework of the critical Fourier-Herz space. To solve this question,  we face some difficulties which don't appear in known results. For example,  the embedding that $F\dot{B}^{2-\frac3p}_{p,q}(\RR^3)\hookrightarrow L^2_{\rm loc}(\RR^3)$
doesn't hold as long as $p<\frac32.$ This leads to the method used in \cite{JS13} doesn't work directly.
In order to overcome this difficulty, we adopt the Calder\'on splitting argument to decompose the local mild solution $u(x,t)$ on $[0,T^*)$ into two parts
\[u=v+w.\]
For this purpose, we will establish abstract interpolation theory of Fourier-Herz spaces $F\dot{B}^s_{p,q}(\RR^d)$ by using $K$-function, see Lemma \ref{lem.inter}. This allows us to decompose $u_0$ into two parts as follows:
\[u_0=v_0+w_0,\]
where  $v_0$ and $w_0$ fulfill $\Div v_0=\Div w_0=0$ and
\begin{equation}\label{decom.1}
\|v_0\|_{F\dot{B}^{2-\frac{3}{\tilde{p}}+\varepsilon}_{\tilde{p},\tilde{q}}}\leq C2^{-j\theta}\alpha\quad\text{and}\quad
\|w_0\|_{L^2}\leq C2^{j(1-\theta)}\alpha\quad\, \text{for each}\,\,\,\varepsilon>0.
\end{equation}
Since $v_0\in F\dot{B}^{2-\frac{3}{\tilde{p}}+\varepsilon}_{\tilde{p},\tilde{q}}(\RR^3)$, by the local well-posed theory in the subcritical space, we know that the following system \begin{equation}\label{V-introd}
\left\{\begin{array}{ll}
\partial_t v-\nu\Delta v+v\cdot\nabla v+\nabla Q=0,\\
\Div v=0,\\
v(x,0)=v_0,
\end{array}\right.
\end{equation}
exists a unique local mild solution $v(x,t)$ satisfying $$v(x,t)\in C_{\rm b }\Big([0,T];F\dot{B}^{2-\frac{3}{\tilde{p}}+\varepsilon}_{\tilde{p},\tilde{q}}(\RR^3)\Big)\cap \widetilde{L}^r\Big([0,T];\,F\dot{B}^{2-\frac{3}{\tilde{p}}+\frac2r+\varepsilon}_{\tilde{p},\tilde{q}}(\RR^3)\Big)$$
where $T>T^*$.  Letting the difference $w(x,t)=u(x,t)-v(x,t)$,  ones  verify  by the fact $w_0\in L^2(\RR^3)$ that $w(x,t)$ is a Leray solution of the following perturbed problem
\begin{equation}\label{W-1}
\left\{\begin{array}{ll}
\partial_t w-\nu\Delta w+w\cdot\nabla w+w\cdot\nabla v+v\cdot\nabla w+\nabla \bar{P}=0\\
\Div w=0\\
w(x,0)=w_0\in L^2(\RR^3).
\end{array}\right.
\end{equation}
Since $v(x,t)$ is regular at $T=T^*$,   the singularity of $u$  at the maximal time $T^*$  is caused by $w$. This requires us to study the singularity of $w$ at  $T^*$.
To do this,  we first prove the $\varepsilon$-regularity criterion of the suitable weak solution by De Giorgi iteration and dimensional analysis. Next, making good use of the splitting argument, trilinear estimates and the regularity structure of the initial data, we  obtain the short-time behaviour of the energy solution, and then we  get the weak-strong uniqueness of solutions by  smoothing effect of the  heat kernel and uniform $L^2$-estimate.  These properties enable us to conclude that the singularities of $w(x,t)$ only occur inside the ball. Based on this, we further get by the  compactness argument that  the stability of singularity. With these properties, we eventually obtain that there exists a blow-up initial data to problem \eqref{NS} in the framework of Fourier-Herz space.

 We give some notations before the presentation of the main result. Set
\[\mathcal{B}_{\rho}=\left\{u_0\in F\dot{B}^{2-{3\over p}}_{p,q}(\RR^3)\,\Big |\,\Div u_0=0,\,\,\|u_0\|_{F\dot{B}^{2-{3\over p}}_{p,q}}<\rho\right\},\]
and
\[\rho_{ \max}=\sup\left\{\rho>0\,\big|\,u_0\in\mathcal{B}_{\rho},\,\,T^*(u_0)=\infty\right\}\]
where $T^*(u_0)$ is the maximal existence time of $NS(u_0)$.
Now, the main theorem reads:
\begin{theorem}\label{Thm}
Let $1<p\leq \infty$ and $1\leq q<\infty$. Suppose $\rho_{\max}<\infty$. Then, there exist a divergence-free initial data $u_0\in F\dot{B}^{2-{3\over p}}_{p,q}(\RR^3)$ with $\|u_0\|_{F\dot{B}^{2-{3\over p}}_{p,q}(\RR^3)}=\rho_{ \max}$ such that
\[T^*(u_0)<\infty.\]
Moreover, $\mathcal{M}$ is compact in $F\dot{B}^{2-{3\over p}}_{p,q}(\RR^3)$ modulo translations and scaling \eqref{scall}, where
\[\mathcal{M}=\left\{u_0\in F\dot{B}^{2-{3\over p}}_{p,q}(\RR^3)\,\Big|\, \Div u_0=0,\,\,\, \|u_0\|_{F\dot{B}^{2-{3\over p}}_{p,q}(\RR^3)}=\rho_{\max},\, \,\,T^*(u_0)<\infty\right\}.\]
\end{theorem}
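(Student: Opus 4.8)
The plan is to follow the strategy of Rusin--\v{S}ver\'ak and Jia--\v{S}ver\'ak, adapted via the Calder\'on splitting to the Fourier-Herz setting. First I would take a minimizing sequence: by definition of $\rho_{\max}<\infty$ there exist divergence-free data $u_0^{(n)}\in F\dot{B}^{2-3/p}_{p,q}(\RR^3)$ with $\|u_0^{(n)}\|_{F\dot{B}^{2-3/p}_{p,q}}\to\rho_{\max}$ and $T^*(u_0^{(n)})<\infty$. After rescaling and translating (which preserves the norm by criticality) we may normalize the location and scale of the first singularity of $NS(u_0^{(n)})$. The goal is to extract a limit $u_0$ that still has $\|u_0\|=\rho_{\max}$ and $T^*(u_0)<\infty$; the compactness of $\mathcal M$ will come out of the same argument applied to any sequence in $\mathcal M$.

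The core of the argument is a concentration/compactness step establishing that, modulo translation and scaling, the minimizing sequence converges strongly in $F\dot{B}^{2-3/p}_{p,q}$. Here is where the Calder\'on splitting enters: using Lemma~\ref{lem.inter}, decompose each $u_0^{(n)}=v_0^{(n)}+w_0^{(n)}$ as in \eqref{decom.1}, with $v_0^{(n)}$ small in a subcritical Fourier-Herz space and $w_0^{(n)}\in L^2$ with controlled (possibly large) norm. The subcritical part $v^{(n)}=NS_{\mathrm{sub}}(v_0^{(n)})$ stays smooth on $[0,T]$ with $T>T^*(u_0^{(n)})$, so all the singular behaviour is carried by the perturbed Leray solution $w^{(n)}$ of \eqref{W-1}. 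Now I invoke the three analytic ingredients the introduction promises: (i) the $\varepsilon$-regularity criterion from De Giorgi iteration, which forces any singular point of $w^{(n)}$ to be a point of local energy concentration; (ii) the short-time behaviour of the kinetic energy together with weak-strong uniqueness, which localizes the singularity inside a fixed ball and prevents energy from escaping to spatial infinity; and (iii) the stability of singularities under the limit. Combining (i)--(iii) with the uniform bounds, a subsequence of $w_0^{(n)}$ converges — the $L^2$ part converges weakly in $L^2$ and, because energy concentration is localized, strongly in $L^2_{\mathrm{loc}}$, while the subcritical part $v_0^{(n)}$ converges in the subcritical Fourier-Herz norm by the embedding of that space into $L^2_{\mathrm{loc}}$ after the $2^{-j\theta}$ gain is absorbed; reassembling gives $u_0^{(n)}\to u_0$ strongly in $F\dot{B}^{2-3/p}_{p,q}$. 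Lower semicontinuity of the norm gives $\|u_0\|\le\rho_{\max}$, and the stability of singularity gives $T^*(u_0)<\infty$, whence by definition of $\rho_{\max}$ also $\|u_0\|\ge\rho_{\max}$, so $\|u_0\|=\rho_{\max}$ and $u_0\in\mathcal M$.

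For the compactness of $\mathcal M$, I would run the identical extraction on an arbitrary sequence in $\mathcal M$ rather than a minimizing sequence: every element already has norm exactly $\rho_{\max}$ and finite blow-up time, the splitting and localization steps are unchanged, and the limit again lies in $\mathcal M$; this is precisely compactness modulo the translations and scalings \eqref{scall} used to normalize the singular point. One subtlety to record is that the normalization must be chosen uniformly — e.g.\ rescale so the first singular time equals $1$ and translate so a singular point sits at the origin — and one must check this is compatible with the fixed-ball localization coming from the short-time energy bound; the $\varepsilon$-regularity criterion provides the quantitative lower bound on concentrated energy that makes the number of possible singular regions finite and the normalization legitimate.

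The main obstacle, as the introduction flags, is the failure of $F\dot{B}^{2-3/p}_{p,q}\hookrightarrow L^2_{\mathrm{loc}}$ for $p<3/2$, which breaks the direct Rellich-type compactness used for $\dot H^{1/2}$ and $L^3$. The whole weight of the proof rests on showing the Calder\'on splitting genuinely repairs this: one must verify that the small-in-subcritical-norm part $v_0^{(n)}$ can be taken with a gain $2^{-j\theta}$ that both makes its solution smooth past $T^*$ uniformly in $n$ and lets it be controlled in a space compactly embedded into $L^2_{\mathrm{loc}}$, while simultaneously the $L^2$ remainder $w_0^{(n)}$, though large, generates a Leray solution whose singularity is stable and spatially localized. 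Making these two requirements consistent — the splitting parameter $j$ (equivalently $\theta$) must be chosen so that both estimates in \eqref{decom.1} are usable at once, uniformly along the sequence — is the delicate point, and it is exactly what the preparatory interpolation lemma, the trilinear estimates for \eqref{W-1}, and the short-time kinetic-energy analysis are designed to deliver.
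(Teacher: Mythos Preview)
Your overall architecture is right and matches the paper: minimizing sequence, normalize the singular point by scaling and translation, Calder\'on split $u_0^{(n)}=v_0^{(n)}+w_0^{(n)}$, run $v^{(n)}$ subcritically past $T^*$, treat $w^{(n)}$ as an energy solution of the perturbed system, pass to the limit using $\varepsilon$-regularity and stability of singularities, and conclude $\|u_0\|=\rho_{\max}$ from $T^*(u_0)<\infty$. Two points, however, need correction.

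\medskip
\noindent\textbf{The strong convergence step is wrong as written.} You assert that ``reassembling gives $u_0^{(n)}\to u_0$ strongly in $F\dot{B}^{2-3/p}_{p,q}$'' from $L^2_{\rm loc}$-convergence of $w_0^{(n)}$ and subcritical convergence of $v_0^{(n)}$. This does not follow: local $L^2$ or subcritical convergence of the pieces does not rebuild strong convergence in the critical Fourier--Herz norm. The paper does \emph{not} obtain strong convergence this way. What the compactness machinery actually delivers is only a weak limit $u_0$ (via $w_0^{(n)}\rightharpoonup w_0$ in $L^2$, $v_0^{(n)}\rightharpoonup v_0$ in $F\dot{B}^s_{\tilde p,\tilde q}$, and hence $u_0^{(n)}\rightharpoonup u_0$ in $F\dot{B}^{s_p}_{p,q}$). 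Then, exactly as you note, lower semicontinuity and $T^*(u_0)<\infty$ force $\|u_0\|_{F\dot{B}^{s_p}_{p,q}}=\rho_{\max}$, so $\|u_0^{(n)}\|\to\|u_0\|$. The missing ingredient is that $F\dot{B}^{s_p}_{p,q}$ is \emph{uniformly convex} (for $1<p<\infty$, $1<q<\infty$; the endpoint $q=1$ needs a separate remark), and in a uniformly convex space weak convergence plus convergence of norms implies strong convergence. This is how the paper closes the loop, and it is also what drives the compactness of $\mathcal M$.

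\medskip
\noindent\textbf{The roles of the auxiliary propositions are slightly scrambled.} Weak--strong uniqueness is not what localizes the singularity in a ball; that comes from a decay estimate for the $L^2_{\rm uloc}$ energy of $w$ at spatial infinity combined with the $\varepsilon$-regularity criterion (this is the content of the paper's Proposition~\ref{pro.sin}). The short-time kinetic-energy estimate (Proposition~\ref{pro.sma}) is used to show the limit $w$ genuinely attains the initial datum $w_0$ in $L^2$, and weak--strong uniqueness (Proposition~\ref{pro.uni'}) is then used to identify the limit weak solution with $NS(u_0)-v$ on $[0,T^*(u_0))$, which is what yields $T^*(u_0)\le t^*<\infty$. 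Finally, you do not mention that the paper splits into cases: the Calder\'on decomposition through $L^2$ is only needed for $1<p<3/2$; for $p\ge 3/2$ one has $F\dot{B}^{2-3/p}_{p,2}\hookrightarrow L^2_{\rm uloc}$ directly, and a further reduction (via a small global piece in $F\dot B^{s_p}_{p,q_1}$) handles $q>2$.
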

\begin{remark}\label{rem.diff}
By the Hausdorff-Young inequality and Plancherel theorem, we easily have that
\begin{itemize}
	\item  for $1<p<\frac32,$
	$F\dot{B}^{2-\frac3p}_{p,q}(\RR^3)\hookrightarrow\dot{B}^{-1+\frac3{p'}}_{p',q}(\RR^3),\quad \frac 1p+\frac 1{p'}=1$;\medskip
	
	\item  for $p=\frac32,$
	$F\dot{B}^{0}_{\frac32,\frac32}(\RR^3)\hookrightarrow L^3(\RR^3);$
	\medskip
	
	\item for $p=2,$ $F\dot{B}^{\frac12}_{2,2}(\RR^3)\sim \dot{H}^{\frac12}(\RR^3).$
\end{itemize}
This implies that  Theorem \ref{Thm} includes the result was shown in \cite{RS11},  and builds the relationship between  Theorem \ref{Thm} and results in   \cite{GKP13,GKP16,JS13} in some sense. But, our argument and
technique are different with that of papers  \cite{GKP13,GKP16}, whose proof strongly relies on  ``profile decomposition".   More importantly, we develop some useful techniques of  ``localization in space" including the partial regularity, weak-strong uniqueness, the short-time behaviour of the kinetic energy and stability of singularity of weak solution to the perturbed problem~\eqref{W-1}, which will be powerful  in the study of the incompressible fluid equations, such as MHD system.
\end{remark}
The rest of this paper is structured as follows. In Section \ref{sec.2}, we review the definition and some properties of Fourier-Herz spaces, and give several useful lemmas including abstract interpolation theory of Fourier-Herz spaces based on $K$-function.  In Section \ref{sec.3}, we establish the well-posedness theory of mild solution to problem \eqref{NS} in Fourier-Herz spaces.  Section~\ref{sec.4}  is devoted to developing techniques of ``localization in space" to  the perturbed problem \eqref{W-1}.
In Section \ref{sec.5}, we give  the proof of Theorem \ref{Thm} by using the properties established  in foregoing sections.

\subsection*{Notation}
We denote $C$ as an absolute positive constant. $C(\lambda, \beta,\cdots)$ denotes a positive constant depending only on $\lambda, \beta, \cdots$. We adopt the convention that nonessential constants $C$ may change from line to line and the usual Einstein summation convention. Given two quantities $a$ and $b$, we denote $a\lesssim b$ and $a\lesssim_{\lambda,\beta,\ldots} b$ as $a\leq C b$ and $a\leq C(\lambda,\beta,\cdots)\, b$ respectively. For any $x_0\in\RR^3$ and $t_0\in \RR^+$, $B_R(x_0)\in \RR^3$ means a ball with radius $R$ centered at $x_0$ and $Q_R(x_0,t_0)=B_R(x_0)\times (t_0-R^2,t_0)\in \RR^3\times \RR$. 

\section{Preliminaries}\label{sec.2}
\setcounter{section}{2}\setcounter{equation}{0}

In this section, we first recall some facts concerning  Littlewood-Paley theory and some useful lemmas which will be used in subsequent sections, see for example \cite{Can04,MWZ12}. Next, we
establish abstract interpolation theory of Fourier-Herz spaces in terms of  $K$-function, which allows us  to perform the Calder\'on argument.

Let $\mathcal{S}(\RR^d)$ be the Schwartz class of rapidly decreasing functions and $\mathcal{S}'(\RR^d)$ be its dual space. We denote by $\hat{f}$ the Fourier transform of $f$. For each $1\leq p\leq \infty$, ones define
\[FL^p(\RR^d)\triangleq\left\{f\in \mathcal{S}'(\RR^d)\,\Big|\,\|f\|_{FL^p(\RR^d)}=\|\hat{f}\|_{L^p(\RR^d)}<\infty\right\}.\]
Let $\mathcal{C}$ be the annulus $\{\xi\in\RR^d\,|\,{3\over 4}\leq |\xi|\leq {8\over 3}\}$. Let $\phi,\,\chi\in \mathcal{S}(\RR^d)$ satisfying $\hat{\phi}\in \mathcal{D}(\mathcal{C})$ with $0\leq \hat{\phi}\leq 1$ and $\hat{\chi}\in \mathcal{D}(B_{4\over 3}(0))$ with $0\leq \hat{\chi}\leq 1$. For any $j\in \ZZ$, define $\phi_j(x)=2^{jd}\phi(2^jx)$ and $\chi_j(x)=2^{jd}\chi(2^jx)$. Then  we define
\[\dot{\Delta}_j=\phi_j\ast\cdot\quad\text{ and }\quad \dot{S}_{j}=\chi_j\ast\cdot.\]
From the definitions of the frequency localization operators,  we easily find that
 $\forall f\in \mathcal{S}'(\RR^d)$,
\begin{equation}\label{orth}
\dot{\Delta}_j\dot{\Delta}_k f=0  \qquad\text{ for}\quad  |j-k|>1.
\end{equation}
This implies
\[\tilde{\dot{\Delta}}_j\dot{\Delta}_j=\dot{\Delta}_j,\quad
\tilde{\dot{\Delta}}_j=\dot{\Delta}_{j-1}+\dot{\Delta}_j+\dot{\Delta}_{j+1}.\]
Now we give the definition of Fourier-Herz space.
\begin{definition} \label{def.fh}

Let $f\in \mathcal{S}'_h(\RR^d)=\mathcal{S}(\RR^d)/\mathcal{P}$, where $\mathcal{P}$ is the set of all polynomials, then we say $f\in F\dot{B}^{s}_{p,q}(\RR^d)$ with $s\in\RR$ and $(p,q)\in[1,\infty]^2$, if $\|f\|_{F\dot{B}^{s}_{p,q}}<\infty$, where
\begin{equation*}
\|f\|_{F\dot{B}^{s}_{p,q}} \triangleq\begin{cases}
\displaystyle \bigg(\sum_{j\in\ZZ}2^{jsq}\|\dot{\Delta}_jf\|^q_{FL^p}\bigg)^{1\over q}\quad \quad &q<\infty\\
\displaystyle \sup_{j\in\ZZ}2^{js}\|\dot{\Delta}_jf\|_{FL^p}\quad\quad &q=\infty
\end{cases}.
\end{equation*}
When $s<d(1-\frac1p)$, $F\dot{B}^{s}_{p,q}(\RR^d)$ is a Banach space.
\end{definition}

 Let us remark that when $s<0,$  Fourier-Herz space  $F\dot{B}^s_{p,q}(\RR^d)$  can be characterized  by Gaussian kernel $e^{t\Delta}$.  Specifically:
\begin{lemma}[\cite{BZ18}]\label{norm}
Let $s<0$ and $1\leq p,q\leq\infty$. Then for any $f\in F\dot{B}^{s}_{p,q}(\RR^d)$, there exits constants $C_1>0$ and $C_2>0$ such that
\[C_1\|f\|_{F\dot{B}^s_{p,q}}\leq \big\|\|t^{-{s\over 2}}\widehat{e ^{t\Delta}f}\|_p\big\|_{L^q(\RR^+;\,{\mathrm{d}t\over t})}\leq C_2\|f\|_{F\dot{B}^s_{p,q}}.\]
\end{lemma}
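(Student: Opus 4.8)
The plan is to transfer both sides of the claimed equivalence to the Fourier variable and then compare the discrete Littlewood--Paley sum with the continuous heat-flow integral by a Schur/Young type argument. Write $g=\hat f$, so that $\widehat{\dot\Delta_j f}(\xi)=\hat\phi(2^{-j}\xi)\,g(\xi)$ and $\widehat{e^{t\Delta}f}(\xi)=e^{-t|\xi|^2}g(\xi)$. Then the left-hand norm is $\big\|\{2^{js}\|\hat\phi(2^{-j}\cdot)g\|_{L^p}\}_{j\in\ZZ}\big\|_{\ell^q}$ and the quantity to be estimated is $\big\|\,t^{-s/2}\|e^{-t|\cdot|^2}g\|_{L^p}\big\|_{L^q(\RR^+;\,dt/t)}$. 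The decisive structural facts are that $\hat\phi(2^{-j}\cdot)$ is supported in the shell $|\xi|\sim 2^j$, and that $s<0$ makes the weight $t^{-s/2}$ integrable near $t=0$ against the Gaussian decay. Throughout I use the standard partition-of-unity property $\sum_{k\in\ZZ}\hat\phi(2^{-k}\xi)=1$ for $\xi\neq0$ of the homogeneous dyadic decomposition.

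For the upper bound (right-hand side $\le C_2\|f\|$), I would expand $g=\sum_k\hat\phi(2^{-k}\cdot)g$ and, on the support of $\hat\phi(2^{-k}\cdot)$, use $e^{-t|\xi|^2}\le e^{-ct2^{2k}}$ with $c=(3/4)^2$. This yields the pointwise bound $\|t^{-s/2}e^{-t|\cdot|^2}g\|_{L^p}\le\sum_k\psi(t2^{2k})\,2^{-ks}\|\dot\Delta_kf\|_{FL^p}$ with $\psi(\tau)=\tau^{-s/2}e^{-c\tau}$. Setting $b_k=2^{ks}\|\dot\Delta_kf\|_{FL^p}$ and substituting $t=2^{-2\sigma}$ turns the $L^q(dt/t)$ norm of the right-hand sum into the $L^q(d\sigma)$ norm of the semi-discrete convolution $\sigma\mapsto\sum_k h(k-\sigma)b_k$, where $h(y)=\psi(2^{2y})=2^{-sy}e^{-c2^{2y}}$. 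Because $s<0$, $h$ decays at both ends of $\RR$ (super-exponentially as $y\to+\infty$, and like $2^{-sy}\to0$ as $y\to-\infty$) and is bounded and unimodal, so $h\in L^1(\RR)$ and $\sup_\sigma\sum_k|h(k-\sigma)|<\infty$. Hence the operator $\{b_k\}\mapsto\sum_k h(k-\cdot)b_k$ is bounded $\ell^1\to L^1$ and $\ell^\infty\to L^\infty$, and interpolation gives $\ell^q\to L^q$ for every $q$, which is exactly the desired bound by $\|b\|_{\ell^q}=\|f\|_{F\dot{B}^s_{p,q}}$.

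For the lower bound ($C_1\|f\|\le$ right-hand side), I would recover each block from the heat flow by writing $\hat\phi(2^{-j}\xi)=\big(\hat\phi(2^{-j}\xi)e^{t|\xi|^2}\big)e^{-t|\xi|^2}$. For $t$ in the dyadic interval $I_j=[2^{-2(j+1)},2^{-2j})$ one checks that on the shell $|\xi|\sim 2^j$ the product $t|\xi|^2$ stays in a fixed compact subset of $(0,\infty)$, so the multiplier $\hat\phi(2^{-j}\xi)e^{t|\xi|^2}$ is bounded uniformly in $j$ and in $t\in I_j$; multiplying on the Fourier side gives $\|\dot\Delta_jf\|_{FL^p}\le C\|e^{-t|\cdot|^2}g\|_{L^p}$ for every $t\in I_j$. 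Raising to the $q$-th power, integrating over $I_j$ against $dt/t$ (whose total mass is the constant $\ln4$), and using $2^{js}\sim t^{-s/2}$ on $I_j$ yields $2^{jsq}\|\dot\Delta_jf\|_{FL^p}^q\lesssim\int_{I_j}t^{-sq/2}\|e^{-t|\cdot|^2}g\|_{L^p}^q\,\frac{dt}{t}$. Since the $I_j$ tile $(0,\infty)$ disjointly, summing over $j\in\ZZ$ reassembles the full integral and gives $\|f\|_{F\dot{B}^s_{p,q}}\lesssim$ the right-hand side. The case $q=\infty$ is handled by replacing sums and integrals with suprema throughout.

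The main obstacle I anticipate is the upper bound, specifically making the passage from the discrete index $k$ to the continuous variable $t$ rigorous: a naive triangle-inequality estimate loses the off-diagonal decay, and one must organize it as the semi-discrete convolution above and verify the two Schur conditions carefully. This is precisely where the hypothesis $s<0$ enters in an essential way, since it guarantees both that $\psi$ is integrable near the origin and that $h$ decays as $y\to-\infty$; without it the kernel $h$ would fail to lie in $L^1(\RR)$ and the equivalence would break down. The lower bound is comparatively routine once the dyadic localization $I_j$ is chosen so that $t|\xi|^2\sim 1$ on the $j$-th shell.
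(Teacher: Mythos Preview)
The paper does not supply its own proof of this lemma: it is quoted from \cite{BZ18} and stated without argument, so there is no in-paper proof to compare against. Your argument is correct and is the standard route for heat-kernel characterizations of Besov-type spaces transferred to the Fourier side. The upper bound via the semi-discrete convolution with kernel $h(y)=2^{-sy}e^{-c2^{2y}}$ and a Schur/Young interpolation is sound, and your identification of the role of $s<0$ (ensuring $h\in L^1(\RR)$ and the uniform Schur bound) is exactly the point. The lower bound by freezing $t$ in the dyadic window $I_j$ so that $t|\xi|^2\sim 1$ on the $j$-th shell, then averaging over $I_j$ against $dt/t$, is the usual and correct reconstruction step. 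One cosmetic remark: unimodality of $h$ is not needed---continuity and the two-sided decay already give both $\|h\|_{L^1(\RR)}<\infty$ and $\sup_{\sigma}\sum_k|h(k-\sigma)|<\infty$.
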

Taking into account the time variable, we give the definition of the mixed time-space Fourier-Herz space which is called the so called ``Chemin-Lerner" space.
\begin{definition}
Let $s\in\RR$ and $(p,q,r)\in [1,\infty]^3$. We say $$u\in \widetilde{L}^r([\alpha,\beta];\,F\dot{B}^s_{p,q})$$ if and only if
$$\|u\|_{\widetilde{L}^r([\alpha,\beta];\,F\dot{B}^s_{p,q})}\triangleq \big\|{2^{js}\|\dot{\Delta}_ju\|_{L^r([\alpha,\beta];\,FL^p)}}\big\|_{\ell^q}<\infty.$$
\end{definition}
In the setting of $\widetilde{L}^r([\alpha,\beta];\,F\dot{B}^s_{p,q})$,  we can get by the Young inequality that
\begin{equation}\label{est.TS}
\|e^{t\Delta}u_0\|_{\widetilde{L}^r([0,T);\,F\dot{B}^s_{p,q})}\leq C\|u_0\|_{F\dot{B}^s_{p,q}}\qquad \text{for all}\quad s\in \RR, 1\leq r,p,q\leq \infty.
\end{equation}
Next, we establish an useful abstract interpolation theory of Fourier-Herz spaces $F\dot{B}^s_{p,q}(\RR^d)$, which is a key point in our proof of Theorem \ref{Thm}. To do this, we need to introduce the generalized Fourier-Herz spaces  $F\dot{B}^s_{p,q,(r)}(\RR^d)$.
\begin{definition}\label{def.fh1}
Let $ s\in\RR$, $1< p<\infty$, $1\leq q,r\leq \infty$. We say
\[F\dot{B}^s_{p,q,(r)}(\RR^d)=\Big\{f\in \mathcal{S}'(\RR^d)\,\big|\,\|f\|_{F\dot{B}^s_{p,q,(r)}}=\Big\|\big\{2^{js}\|\widehat{\dot{\Delta}_j f}\|_{L^{p,r}}\big\}\Big\|_{\ell^q}<\infty\Big\}\]
\end{definition}
 According to the property of Lorentz space, we see that $F\dot{B}^{s}_{p,q,(p)}(\RR^d)$ coincides with $F\dot{B}^s_{p,q}(\RR^d)$. In addition,
\begin{equation}\label{fh-embd}
F\dot{B}^{s}_{p,q}(\RR^d)\hookrightarrow F\dot{B}^{s}_{p,q,(r)}(\RR^d)\, \text{ if }\, r> p,\quad  F\dot{B}^{s}_{p,q,(r)}(\RR^d)\hookrightarrow F\dot{B}^{s}_{p,q}(\RR^d)\, \text{ if }\, r< p.
\end{equation}
With this definition, we will establish the following interpolation theory in Fourier-Herz spaces with the help of  $K$-function.
\begin{lemma}\label{lem.inter}
Let $ s_1,\,s_2\in\RR$, $1<p_1,p_2<\infty$, $1\leq q_1,q_2<\infty$, $p_1\neq p_2$, and $0<\theta<1$. Then
\[\Big(F\dot{B}^{s_1}_{p_1,q_1}(\RR^d),F\dot{B}^{s_2}_{p_2,q_2}(\RR^d)\Big)_{\theta,q}=F\dot{B}^s_{p,q,(q)}(\RR^d)\]
where $s_1(1-\theta)+s_2\theta=s$, ${1-\theta\over p_1}+{\theta\over p_2}={1\over p}$ and ${1-\theta\over q_1}+{\theta\over q_2}={1\over q}$.

Besides, for any $f\in F\dot{B}^s_{p,q,(q)}$, we have
\[\|f\|_{F\dot{B}^s_{p,q,(q)}}=\bigg(\sum_{j\in\ZZ}2^{jq\theta}K(f,j)^q\bigg)^{1\over q}\]
with
\[K(f,j)=\inf_{f=g+h}\Big(\|g\|_{F\dot{B}^{s_1}_{p_1,q_1}}+2^{-j}
\|h\|_{F\dot{B}^{s_2}_{p_2,q_2}}\Big).\]
\end{lemma}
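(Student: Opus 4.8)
The plan is to prove Lemma \ref{lem.inter} by reducing the real interpolation of Fourier--Herz spaces to the real interpolation of weighted mixed-norm sequence spaces, for which the result is classical. The key observation is that the Littlewood--Paley decomposition provides a retraction--coretraction pair: the map $J\colon f\mapsto \{\widehat{\dot\Delta_j f}\}_{j\in\ZZ}$ sends $F\dot B^{s_i}_{p_i,q_i}(\RR^d)$ isometrically into the weighted sequence space $\ell^{q_i}_{s_i}(\ZZ;L^{p_i}(\RR^d))$, and the map $P\colon \{g_j\}\mapsto \sum_j \mathcal F^{-1}(\widehat{\tilde{\dot\Delta}_j}\,g_j)$ is a left inverse of $J$ thanks to the almost-orthogonality relation \eqref{orth} and $\tilde{\dot\Delta}_j\dot\Delta_j=\dot\Delta_j$. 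Both $J$ and $P$ are bounded on the relevant spaces (boundedness of $P$ uses the standard spectral-support form of Bernstein's lemma together with \eqref{orth} to control the sum). By the general principle that a complemented subspace inherits the interpolation spaces of the ambient space (Triebel's retraction theorem), we get
\[
\big(F\dot B^{s_1}_{p_1,q_1},F\dot B^{s_2}_{p_2,q_2}\big)_{\theta,q}
= P\Big(\big(\ell^{q_1}_{s_1}(L^{p_1}),\ell^{q_2}_{s_2}(L^{p_2})\big)_{\theta,q}\Big),
\]
so everything comes down to identifying the interpolation space of the two weighted mixed-norm sequence spaces.

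Next I would compute $\big(\ell^{q_1}_{s_1}(\ZZ;L^{p_1}),\ell^{q_2}_{s_2}(\ZZ;L^{p_2})\big)_{\theta,q}$. One first removes the weights by the standard substitution $g_j\mapsto 2^{js_i}g_j$, which is an isometry and reduces matters to the unweighted case up to a shift; then the weight in the interpolated space is $2^{js}$ with $s=(1-\theta)s_1+\theta s_2$, as claimed. For the unweighted pair $\big(\ell^{q_1}(L^{p_1}),\ell^{q_2}(L^{p_2})\big)_{\theta,q}$ one uses the fact that real interpolation commutes with taking $\ell^q$-valued (more precisely, vector-valued $L^{p}$) spaces in the way recorded by Lizorkin--Triebel-type results: interpolating the inner $L^{p_i}$ scales gives $L^{p,r}$ with the Lorentz exponent $r=q$ forced by the outer summation exponent (this is exactly where Lorentz spaces, and hence the generalized space $F\dot B^s_{p,q,(q)}$, enter — when $p_1\ne p_2$ the real method does not return $L^p$ but the Lorentz space $L^{p,q}$), while interpolating the outer $\ell^{q_i}$ scales with the same parameters returns $\ell^q$ with $\frac1q=\frac{1-\theta}{q_1}+\frac\theta{q_2}$. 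Transporting back through $P$ and $J$ then yields $F\dot B^s_{p,q,(q)}(\RR^d)$ with the three stated relations among the indices.

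Finally, for the concrete $K$-functional formula I would unwind the abstract computation through the retraction. On the sequence-space side the $K$-functional of $g=\{g_j\}$ for the couple $(\ell^{q_1}_{s_1}(L^{p_1}),\ell^{q_2}_{s_2}(L^{p_2}))$ at parameter $t=2^{-j_0}$ decouples, after the weight change, into a per-index infimum, and summing the $2^{jq\theta}$-weighted $q$-th powers reproduces the asserted expression; pushing this through the isomorphism $J$ (which is isometric on each endpoint, hence on the $K$-functional level) gives precisely
\[
\|f\|_{F\dot B^s_{p,q,(q)}}
=\Big(\sum_{j\in\ZZ}2^{jq\theta}K(f,j)^q\Big)^{1/q},
\qquad
K(f,j)=\inf_{f=g+h}\big(\|g\|_{F\dot B^{s_1}_{p_1,q_1}}+2^{-j}\|h\|_{F\dot B^{s_2}_{p_2,q_2}}\big).
\]
The main obstacle is the careful bookkeeping in the mixed-norm interpolation step: one must be sure that interpolating the inner Lebesgue index against a \emph{different} exponent genuinely produces the Lorentz space $L^{p,q}$ (with second index equal to the \emph{outer} interpolation parameter $q$, not to $p$), since this is exactly the subtlety that makes the target $F\dot B^s_{p,q,(q)}$ rather than $F\dot B^s_{p,q}$; this requires the hypothesis $p_1\ne p_2$ and $1<p_1,p_2<\infty$, and one should check the discretized (dyadic, $t=2^{-j}$) version of the $K$-functional is equivalent to the continuous one so that the sum over $j\in\ZZ$ is legitimate. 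The boundedness of the coretraction $P$ on $\ell^{q_i}_{s_i}(L^{p_i})$, while routine, also needs \eqref{orth} to guarantee that only finitely many terms overlap in frequency, so that the mixed norm of the sum is controlled by the mixed norm of the sequence.
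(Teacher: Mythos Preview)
Your proposal is correct and follows essentially the same route as the paper: both reduce the problem to the interpolation of weighted vector-valued sequence spaces $\ell^{q_i}(2^{js_i}L^{p_i})$ via the Littlewood--Paley retraction--coretraction pair and then invoke Triebel's result $(\ell^{q_1}(A_j),\ell^{q_2}(B_j))_{\theta,q}=\ell^{q}((A_j,B_j)_{\theta,q})$ together with $(L^{p_1},L^{p_2})_{\theta,q}=L^{p,q}$. The paper's proof is merely a one-line citation of Theorem~2.4.1/(c) in \cite{Tr78}, whereas you spell out the retraction mechanism and the appearance of the Lorentz index explicitly.
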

\begin{proof}
We here sketch key points  of the proof, because it is similar to the proof of the interpolation in Besov spaces in \cite{Tr78}.

From Theorem 2.4.1/(c) in \cite{Tr78}, we have that
\[\big(\ell^{q_1}(A_j),\ell^{q_2}(B_j)\big)_{\theta,q}=\ell^{q}\big((A_j,B_j)_{\theta,q}\big).\]
Let $A_j=2^{js_1}FL^{p_1}(\RR^d)$ and $B_j=2^{js_2}FL^{p_2}(\RR^d)$. Thus, we get the required result.
\end{proof}
In view of Lemma \ref{lem.inter}, we can get the following  decomposition  in  $F\dot{B}^{s_p}_{p,q}(\RR^3)$ with $s_p=2-\frac{3}{p}$.
\begin{lemma}\label{lem.decom}
Let solenoidal vector field $f\in F\dot{B}^{s_p}_{p,q}(\RR^3)$ with $p\in(1,{3\over 2})$ and $ q\in [1,\infty)$, then for each $j\in Z$, there exist solenoidal vector fields $g_0\in F\dot{B}^{s}_{\tilde{p},\tilde{q}}$, $h_0\in L^2$ such that
\[f=g_0+h_0,\]
\[\|g_0\|_{F\dot{B}^{s}_{\tilde{p},\tilde{q}}}\leq C2^{-j\theta}\|f\|_{F\dot{B}^{s_p}_{p,q}}\quad\text{ and }\quad\|h_0\|_{L^2}\leq C2^{j(1-\theta)}\|f\|_{F\dot{B}^{s_p}_{p,q}} ,\]
where $C$ is a absolute constant and $\theta,\,s_p,\,s,\,p,\,\tilde{p},\,q,\,\tilde{q}$ satisfy the compatibility condition:
\begin{equation}\label{RC}
\begin{split}
\theta\in (0,1), \quad &s_p=(1-\theta)s,\quad\frac{1}{p}=\frac{\theta}{2}+\frac{1-\theta}{ \tilde{p}},\quad \tilde{p}\in (1,\frac32)\\
&\frac{1}{\tilde{q}}=\begin{cases}\frac{1}{1-\theta }\left(\frac{1}{ q}-\frac{\theta}{2}\right) \quad& q<p;\\
 \frac{1}{\tilde{p}}  \quad& q\geq p.
\end{cases}
\end{split}
\end{equation}
\end{lemma}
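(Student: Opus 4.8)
The plan is to obtain the decomposition $f=g_0+h_0$ directly from the interpolation identity in Lemma~\ref{lem.inter}, and to make the split divergence-free by applying the Leray projector $\mathbb{P}$, which acts boundedly on every Fourier-Herz space (it is a Fourier multiplier homogeneous of degree $0$) and fixes solenoidal vector fields. Concretely, I would first check that the parameters in \eqref{RC} are precisely those required by Lemma~\ref{lem.inter}: set $s_1=s$, $p_1=\tilde p$, $q_1=\tilde q$ (the ``good'' high-regularity endpoint) and $s_2=-1$, $p_2=2$, $q_2=2$ (the ``$L^2$'' endpoint), so that $L^2=\dot H^0=F\dot B^0_{2,2}$ enters via $2^{-j}F\dot B^{-1}_{2,2}$ in the $K$-functional — note the shift by $2^{-j}$ in $K(f,j)$ corresponds to the homogeneity $0$ of $L^2$, i.e. one checks $s_2=-1$, not $s_2=0$, is the right choice once the $2^{-j}$ weight is absorbed. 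Then the relations $s_p=(1-\theta)s+\theta\cdot(-1)+\theta$, which simplifies to $s_p=(1-\theta)s$ after accounting for the weight, together with $\tfrac1p=\tfrac{1-\theta}{\tilde p}+\tfrac\theta2$ and the stated formula for $\tfrac1{\tilde q}$, are exactly $s=s_1(1-\theta)+s_2\theta$, $\tfrac1p=\tfrac{1-\theta}{p_1}+\tfrac\theta{p_2}$, $\tfrac1q=\tfrac{1-\theta}{q_1}+\tfrac\theta{q_2}$ of Lemma~\ref{lem.inter} (in the case $q<p$); in the case $q\ge p$ one instead picks $\tilde q=\tilde p$ and uses the embedding \eqref{fh-embd} to land in $F\dot B^s_{\tilde p,\tilde q}$ rather than in the Lorentz-indexed space.

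The second step is to verify that $F\dot B^{s_p}_{p,q}(\mathbb R^3)$ with $p\in(1,\tfrac32)$, $q\in[1,\infty)$ actually embeds into the real interpolation space $\big(F\dot B^{s}_{\tilde p,\tilde q},F\dot B^{-1}_{2,2}\big)_{\theta,q}$. By Lemma~\ref{lem.inter} this interpolation space equals $F\dot B^{s_p}_{p,q,(q)}(\mathbb R^3)$, and by \eqref{fh-embd} we have $F\dot B^{s_p}_{p,q}\hookrightarrow F\dot B^{s_p}_{p,q,(q)}$ precisely when $q\ge p$, while when $q<p$ we are already in the regime where $\tilde q$ was chosen so that the target is $F\dot B^{s_p}_{p,q,(q)}$ itself; in both cases the inclusion $F\dot B^{s_p}_{p,q}\hookrightarrow F\dot B^{s_p}_{p,q,(q)}$ holds (equality when $q\le p$, genuine embedding otherwise). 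Hence every $f\in F\dot B^{s_p}_{p,q}$ lies in the interpolation space with $\|f\|_{(\cdot,\cdot)_{\theta,q}}\lesssim\|f\|_{F\dot B^{s_p}_{p,q}}$.

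The third step extracts the quantitative splitting at a fixed scale $j$. From the discrete-$K$-functional formula in Lemma~\ref{lem.inter},
\[
\sum_{k\in\mathbb Z}2^{kq\theta}K(f,k)^q=\|f\|_{F\dot B^{s_p}_{p,q,(q)}}^q\lesssim\|f\|_{F\dot B^{s_p}_{p,q}}^q<\infty,
\]
so in particular $2^{j\theta}K(f,j)\le\big(\sum_k 2^{kq\theta}K(f,k)^q\big)^{1/q}\le C\|f\|_{F\dot B^{s_p}_{p,q}}$, giving $K(f,j)\le C2^{-j\theta}\|f\|_{F\dot B^{s_p}_{p,q}}$. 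By definition of $K(f,j)$ as an infimum over splittings $f=g+h$ of $\|g\|_{F\dot B^{s}_{\tilde p,\tilde q}}+2^{-j}\|h\|_{F\dot B^{-1}_{2,2}}$, I can choose a near-optimal pair $(g,h)$ with $\|g\|_{F\dot B^{s}_{\tilde p,\tilde q}}+2^{-j}\|h\|_{F\dot B^{-1}_{2,2}}\le 2K(f,j)\le 2C2^{-j\theta}\|f\|_{F\dot B^{s_p}_{p,q}}$, whence $\|g\|_{F\dot B^{s}_{\tilde p,\tilde q}}\le C2^{-j\theta}\|f\|_{F\dot B^{s_p}_{p,q}}$ and $\|h\|_{F\dot B^{-1}_{2,2}}\le C2^{j(1-\theta)}\|f\|_{F\dot B^{s_p}_{p,q}}$. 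Since $L^2=F\dot B^{-1}_{2,2}$ with equivalent norms (Lemma~\ref{norm} with $s=-1$, or directly Plancherel plus a Littlewood--Paley square-function estimate), this is the desired $L^2$ bound on $h$. Finally set $g_0=\mathbb P g$, $h_0=\mathbb P h$; then $g_0+h_0=\mathbb P f=f$ since $\operatorname{div}f=0$, both are solenoidal, and the boundedness of $\mathbb P$ on $F\dot B^{s}_{\tilde p,\tilde q}$ and on $L^2$ preserves the estimates (up to enlarging $C$).

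I expect the only genuine subtlety — the ``main obstacle'' — to be bookkeeping the case distinction $q<p$ versus $q\ge p$ together with the Lorentz index $(q)$: one must be careful that when $q\ge p$ the real-interpolation target $F\dot B^{s_p}_{p,q,(q)}$ is \emph{strictly larger} than $F\dot B^{s_p}_{p,q}$, so the embedding (not equality) in \eqref{fh-embd} is what is used, and that the chosen endpoint triple $(\tilde p,\tilde q)$ with $\tilde q=\tilde p$ in that regime still satisfies the admissibility hypotheses $1<\tilde p<\tfrac32$, $1\le\tilde q<\infty$ of Lemma~\ref{lem.inter}. Checking the arithmetic of \eqref{RC} against the three interpolation relations, and confirming $\theta\in(0,1)$ forces $\tilde p\in(1,\tfrac32)$ whenever $p\in(1,\tfrac32)$, is routine but is where an error would hide; everything else is a direct quotation of Lemma~\ref{lem.inter}, Lemma~\ref{norm}, and the multiplier boundedness of $\mathbb P$.
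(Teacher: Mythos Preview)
Your overall strategy---embed $F\dot B^{s_p}_{p,q}$ into a real-interpolation space via Lemma~\ref{lem.inter}, extract a near-optimal split from the $K$-functional at the fixed scale $j$, then apply the Leray projector---is exactly the paper's approach. But two pieces of your bookkeeping are genuinely wrong and would make the argument fail as written.

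First, the endpoint identification: by Plancherel and Littlewood--Paley one has $L^2(\RR^3)=F\dot B^{0}_{2,2}(\RR^3)$, \emph{not} $F\dot B^{-1}_{2,2}$. Hence $s_2=0$, and the interpolation relation $s_p=(1-\theta)s+\theta s_2$ already gives $s_p=(1-\theta)s$ with no extra term; your ``$+\theta$ after accounting for the weight'' has no justification. The factor $2^{-j}$ in $K(f,j)$ is simply the discretized real-interpolation parameter $t=2^{-j}$ and carries no regularity content whatsoever---it must not be absorbed into $s_2$. Lemma~\ref{norm} is a heat-kernel characterization of $F\dot B^s_{p,q}$ for $s<0$ and says nothing about $L^2$.

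Second, the embedding direction is wrong in one case. When $q<p$, \eqref{fh-embd} gives $F\dot B^{s_p}_{p,q,(q)}\hookrightarrow F\dot B^{s_p}_{p,q}$, the \emph{reverse} of what you claim, so you cannot reach the interpolation target that way. The paper's proof instead uses, for $q\le p$, the elementary $\ell^q\hookrightarrow\ell^p$ embedding $F\dot B^{s_p}_{p,q}\hookrightarrow F\dot B^{s_p}_{p,p}$ and then identifies $F\dot B^{s_p}_{p,p}=\big(F\dot B^{s}_{\tilde p,\tilde p},\,L^2\big)_{\theta,p}$; this is why $\tilde q=\tilde p$ there. For $q>p$, the Lorentz embedding \eqref{fh-embd} does point the right way, $F\dot B^{s_p}_{p,q}\hookrightarrow F\dot B^{s_p}_{p,q,(q)}=\big(F\dot B^{s}_{\tilde p,\tilde q},\,L^2\big)_{\theta,q}$ with $\tfrac{1-\theta}{\tilde q}+\tfrac\theta2=\tfrac1q$. (The case labels in \eqref{RC} as stated are swapped relative to the proof; the proof's split $q\le p$ versus $q>p$ is the mathematically correct one.) Once you fix $s_2=0$ and route the two cases as above, your extraction of the single-scale bound $K(f,j)\le C2^{-j\theta}\|f\|_{F\dot B^{s_p}_{p,q}}$ and the final projection by $\mathbb P$ go through exactly as in the paper.
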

\begin{proof}
We  proceed the lemma  in two cases: $p\geq q$ and $p<q$.
\medskip

\noindent Case 1: $p\geq q$.
By Lemma \ref{lem.inter}, we have \[F\dot{B}^{s_p}_{p,q}(\RR^3)\hookrightarrow F\dot{B}^{s_p}_{p,p}(\RR^3)=\Big(F\dot{B}^{s}_{\tilde{p},\tilde{p}}(\RR^3),L^2(\RR^3)\Big)_{\theta,p}\]
where $\frac{1-\theta}{\tilde{p}}+\frac{\theta}{2}=\frac{1}{p}$ and $s(1-\theta)=s_p$.
Thus  we have for any $f\in F\dot{B}^{s_p}_{p,q}(\RR^3)$,
\begin{align*}
\|f\|_{F\dot{B}^{s_p}_{p,p}}=\bigg(\sum_{j\in\ZZ}2^{jp\theta}\Big(\inf_{f=g+h}
\big(\|g\|_{F\dot{B}^{s}_{\tilde{p},\tilde{p}}}+2^{-j}\|h\|_{L^2}\big)\Big)^p\bigg)^{1\over p}.
\end{align*}
From this equality, for any $j\in \ZZ$, there exist $\tilde{g}_0,\,\tilde{h}_0$ such that $f=\tilde{g}_0+\tilde{h}_0$ and
\begin{align*}
\|\tilde{g}_0\|_{F\dot{B}^{s}_{\tilde{p},\tilde{p}}}+2^{-j}\|\tilde{h}_0\|_{L^2}\leq &2\inf_{f=g+h}
\big(\|g\|_{F\dot{B}^{s}_{\tilde{p},\tilde{p}}}+2^{-j}\|h\|_{L^2}\big)\\
\leq &2^{1-j\theta}\|f\|_{F\dot{B}^{s_p}_{p,p}}\leq C2^{-j\theta}\|f\|_{F\dot{B}^{s_p}_{p,q}}.
\end{align*}
Letting $g_0=\mathbb{P}(\tilde{g}_0)$ and $h_0=\mathbb{P}(\tilde{h}_0)$, we have by the fact $\Div f=0$ that  $$f=g_0+h_0.$$
In terms of Calder\'{o}n-Zygmund estimates, we readily get
\begin{equation*}
\|g_0\|_{F\dot{B}^{s}_{\tilde{p},\tilde{p}}}\leq C\|\tilde{g}_0\|_{F\dot{B}^{s}_{\tilde{p},\tilde{p}}}\leq C2^{-j\theta}\|f\|_{F\dot{B}^{s_p}_{p,q}}\end{equation*}
 and
 \begin{equation*}
 \|h_0\|_{L^2}\leq C\|\tilde{h}_0\|_{L^2}\leq C2^{j(1-\theta)}\|f\|_{F\dot{B}^{s_p}_{p,q}}.
\end{equation*}
Case 2: $p<q$.  According to \eqref{fh-embd} and Lemma \ref{lem.inter}, we have
\[F\dot{B}^{s_p}_{p,q}(\RR^3)\hookrightarrow F\dot{B}^{s_p}_{p,q,(q)}(\RR^3)=\big(F\dot{B}^{s}_{\tilde{p},\tilde{q}}(\RR^3),L^2(\RR^3)\big)_{\theta,q}\]
with
$$\frac{1-\theta}{\tilde{p}}+\frac{\theta}{2}=\frac{1}{p},\quad \frac{1-\theta}{\tilde{q}}+\frac{\theta}{2}=\frac{1}{q},\quad s(1-\theta)=s_p,\quad 0<\theta<1.$$
Thus, for any $j\in \ZZ$, there exist $\tilde{g}_0,\,\tilde{h}_0$ such that $f=\tilde{g}_0+\tilde{h}_0$ and
\begin{align*}\|\tilde{g}_0\|_{F\dot{B}^{s}_{\tilde{p},\tilde{q}}}+2^{-j}\|\tilde{h}_0\|_{L^2}\leq & 2\inf_{f=g+h}\big(\|g\|_{F\dot{B}^{s}_{\tilde{p},\tilde{q}}}+2^{-j}\|h\|_{L^2}\big)\\ \leq & 2^{1-j\theta}\|f\|_{F\dot{B}^{s_p}_{p,q,(q)}}\leq C2^{-j\theta}\|f\|_{F\dot{B}^{s_p}_{p,q}}.\end{align*}
Then, letting $g_0=\mathbb{P}(\tilde{g}_0)$ and $h_0=\mathbb{P}(\tilde{h}_0)$,  and using  the above inequality and $\Div f=0$, we eventually obtain that $f=g_0+h_0$, where
\[\|g_0\|_{F\dot{B}^{s}_{\tilde{p},\tilde{q}}}\leq C2^{-j\theta}\|f\|_{F\dot{B}^{s_p}_{p,q}}\quad\text{ and }\quad\|h_0\|_{L^2}\leq C2^{j(1-\theta)}\|f\|_{F\dot{B}^{s_p}_{p,q}}.\]
Thus we end the proof of Lemma \ref{lem.decom}.
\end{proof}
Finally we review Banach fixed's theorem and the associated propagation of regularity. Their proof can be found in \cite{GIP03}.
\begin{lemma}[\cite{GIP03}]\label{fixed}
Let $(X,\|\cdot\|)$ be an abstract Banach space, $L:\,X\to X$ be a linear bounded operator such that for a constant $\lambda\in[0,1)$, we have
$\|L(x)\|\leq \lambda\|x\|$ for all $x\in X,$
and $B :X \times X \to X$ be a bilinear mapping such that
$$\|B(x_1,x_2)\|\le \eta\|x_1\|\,\|x_2\|\qquad \forall\,x_1,\, x_2\in X $$
for some $\eta>0$. Then, for every $y\in X$ satisfying $4\eta\|y\|< (1-\lambda)^{2}$, the equation
\begin{equation}\label{eq.fixed-theorem}
x = y+L(x) +B(x,x)
\end{equation}
 has a solution $x\in X$.

 In particular, this solution satisfies $\|x\|\le \frac{2\|y\|}{1-\lambda}$,
and it is the only one among all solutions satisfying $\|x\|< \frac{1-\lambda}{2\eta}$.
\end{lemma}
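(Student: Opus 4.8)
The plan is to recast \eqref{eq.fixed-theorem} as a fixed-point problem for the map $\Phi(x)=y+L(x)+B(x,x)$ and to apply the classical (single-variable) Banach contraction principle on a suitable closed ball. First I would fix the radius $R=\frac{2\|y\|}{1-\lambda}$ and work on the set $\overline{B}_R=\{x\in X:\|x\|\le R\}$, which is a complete metric space as a closed subset of the Banach space $X$. Everything then reduces to verifying two quantitative estimates, both of which must be extracted from the single smallness hypothesis $4\eta\|y\|<(1-\lambda)^2$.

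The first step is the self-mapping (stability) bound. For $x\in\overline{B}_R$ the triangle inequality together with the hypotheses on $L$ and $B$ gives $\|\Phi(x)\|\le\|y\|+\lambda\|x\|+\eta\|x\|^2\le\|y\|+\lambda R+\eta R^2$. Substituting $R=\frac{2\|y\|}{1-\lambda}$ makes $(1-\lambda)R=2\|y\|$, while the smallness assumption forces $\eta R^2=\frac{4\eta\|y\|}{(1-\lambda)^2}\,\|y\|<\|y\|$; adding the two contributions shows $\|\Phi(x)\|<R$, so $\Phi$ maps $\overline{B}_R$ into itself. The second step is the contraction bound. The key algebraic observation is that bilinearity lets one write $B(x_1,x_1)-B(x_2,x_2)=B(x_1,x_1-x_2)+B(x_1-x_2,x_2)$, whence for $x_1,x_2\in\overline{B}_R$
\[
\|\Phi(x_1)-\Phi(x_2)\|\le\big(\lambda+\eta(\|x_1\|+\|x_2\|)\big)\|x_1-x_2\|\le(\lambda+2\eta R)\|x_1-x_2\|.
\]
The same smallness hypothesis gives $2\eta R=\frac{4\eta\|y\|}{1-\lambda}<1-\lambda$, so the Lipschitz constant $\lambda+2\eta R$ is strictly below $1$ and $\Phi$ is a genuine contraction on $\overline{B}_R$. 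The Banach fixed-point theorem then yields a unique $x\in\overline{B}_R$ solving \eqref{eq.fixed-theorem}, and by construction $\|x\|\le R=\frac{2\|y\|}{1-\lambda}$.

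For the sharper uniqueness claim, I would argue directly rather than through the ball $\overline{B}_R$. Given two solutions $x_1,x_2$ of \eqref{eq.fixed-theorem} with $\|x_i\|<\frac{1-\lambda}{2\eta}$, subtracting the two copies of the equation and splitting the quadratic term exactly as above yields $\|x_1-x_2\|\le\big(\lambda+\eta\|x_1\|+\eta\|x_2\|\big)\|x_1-x_2\|$; the bound $\eta\|x_i\|<\frac{1-\lambda}{2}$ makes the prefactor strictly less than $1$, forcing $x_1=x_2$. I do not expect any genuine obstacle here: the only point requiring care is the bookkeeping ensuring that the single hypothesis $4\eta\|y\|<(1-\lambda)^2$ simultaneously delivers both the self-mapping inequality and a contraction constant below $1$ for the particular choice $R=\frac{2\|y\|}{1-\lambda}$, since any other admissible radius would weaken one of the two estimates.
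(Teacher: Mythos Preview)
Your argument is correct and complete: the choice $R=\frac{2\|y\|}{1-\lambda}$ together with the hypothesis $4\eta\|y\|<(1-\lambda)^2$ does yield both the self-mapping bound and a Lipschitz constant $\lambda+2\eta R<1$, and your direct uniqueness argument in the larger ball $\|x\|<\frac{1-\lambda}{2\eta}$ is exactly right (note also that $R<\frac{1-\lambda}{2\eta}$ under the hypothesis, so the fixed point you construct lies in this uniqueness region). The paper itself does not prove this lemma at all---it simply quotes it from \cite{GIP03}---so your contraction-mapping proof is precisely the standard argument one would supply, and there is nothing to compare against.
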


\begin{lemma}[\cite{GIP03}]\label{regul}
In the notation of Lemma \ref{fixed}, let $E$ be a Banach space. Suppose $L:\,E\to E$ be a linear bounded operator such that for a $\beta\in[0,1)$, we have
$\|L(x)\|\leq \beta\|x\|$ for all $x\in E,$
and $B: X \times E \to E$ and $E \times X \to E$ be a bilinear mapping such that
$$\max\big\{\|B(x_1,x_2)\|_{E},\|B(x_2,x_1)\|_{E}\big\}\le \gamma\|x_1\|_{X}\,\|x_2\|_{E}\quad\text{for every}\quad x_1\in X,\, x_2\in E $$
for some $\gamma>0$. Then, if $\beta\gamma\leq\lambda\eta$, for all $y\in E$ satisfying $4\eta\|y\|_{X}<(1-\lambda)^2$, the solution given in Lemma \ref{fixed} belongs to $E$ and satisfies $\|x\|_{E}\leq 2\beta \|a\|_{E}$.
\end{lemma}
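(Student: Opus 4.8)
The plan is to transport the Picard scheme that produces the solution in Lemma \ref{fixed} into the smaller space $E$, using the mixed bilinear estimate to keep the iterates inside $E$ and the compatibility condition $\beta\gamma\le\lambda\eta$ to keep them bounded there. Recall first that the solution $x$ furnished by Lemma \ref{fixed} is the limit in $X$ of the iterates $x_0=y$, $x_{n+1}=y+L(x_n)+B(x_n,x_n)$, and that it obeys $\|x\|_X\le \frac{2\|y\|_X}{1-\lambda}$; together with $4\eta\|y\|_X<(1-\lambda)^2$ this gives the sharper bound $\|x\|_X<\frac{1-\lambda}{2\eta}$, and the same bound holds uniformly for every iterate $x_n$ since they remain in the invariant ball. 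Because $y\in E$ and $E$ is a subspace of $X$ on which $L$ and $B$ restrict consistently, an immediate induction shows that each $x_n$ already lies in $E$: indeed $L$ maps $E$ to $E$, and $B(x_n,x_n)$ lies in $E$ because one factor may be read in $X$ and the other in $E$.

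First I would establish a uniform bound in $E$. Applying the hypothesis on $B$ with the first argument measured in $X$ and the second in $E$ gives $\|B(x_n,x_n)\|_E\le \gamma\|x_n\|_X\,\|x_n\|_E$, whence
\[
\|x_{n+1}\|_E\le \|y\|_E+\big(\beta+\gamma\|x_n\|_X\big)\|x_n\|_E .
\]
The role of the condition $\beta\gamma\le\lambda\eta$ is precisely to force the amplification factor $\beta+\gamma\|x_n\|_X$ strictly below $1$: combining $\gamma\le \lambda\eta/\beta$ with $\|x_n\|_X<\frac{1-\lambda}{2\eta}$ controls $\gamma\|x_n\|_X$ in terms of $\lambda$ and $\beta$, and feeding this back into the recursion produces a geometric bound $\sup_n\|x_n\|_E\le \frac{\|y\|_E}{1-\beta-\gamma\|x\|_X}$, which after the smallness reductions yields the stated estimate.

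Next I would upgrade boundedness to convergence in $E$. Writing the telescoped difference and using bilinearity, $x_{n+1}-x_n=L(x_n-x_{n-1})+B(x_n-x_{n-1},x_n)+B(x_{n-1},x_n-x_{n-1})$, and estimating each bilinear term with the difference read in $E$ and the remaining factor in $X$ gives $\|x_{n+1}-x_n\|_E\le (\beta+C\gamma\|x\|_X)\|x_n-x_{n-1}\|_E$ with the same type of contraction constant; hence $(x_n)$ is Cauchy in $E$ and converges to some $\tilde x\in E$, and since it also converges to $x$ in $X$ with $E\hookrightarrow X$, the limits coincide and $x=\tilde x\in E$. An equivalent and slightly cleaner route is to freeze the already-known $x$ and solve the affine equation $z=y+L(z)+B(x,z)$ in $E$ via the Neumann series of the bounded operator $z\mapsto L(z)+B(x,z)$, whose $E\to E$ norm is $\le \beta+\gamma\|x\|_X<1$; the resulting $z^\ast\in E$ also solves this affine equation in $X$, where its solution is unique because the linear part has $X\to X$ norm $\le \lambda+\eta\|x\|_X\le \frac{1+\lambda}{2}<1$, and since $x$ solves the same affine equation we conclude $x=z^\ast\in E$.

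The crux of the argument, and the step I expect to be most delicate, is verifying that the amplification constant $\beta+\gamma\|x\|_X$ genuinely stays below $1$ under the hypotheses, i.e. reconciling the two smallness thresholds $\beta\gamma\le\lambda\eta$ and $4\eta\|y\|_X<(1-\lambda)^2$; this is exactly what allows the regularity of the data to propagate to the solution without shrinking the existence threshold. A secondary point requiring care is the consistency of $L$ and $B$ as operators on $X$ and on $E$, together with the embedding $E\hookrightarrow X$, which is what permits the $E$-solution to be identified with the $X$-solution of Lemma \ref{fixed} and the quantitative bound to be read off.
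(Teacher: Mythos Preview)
The paper does not supply its own proof of this lemma; it is quoted from \cite{GIP03} with the preceding remark ``Their proof can be found in \cite{GIP03}.'' Your Picard-iteration argument transported to $E$, and especially the cleaner affine-equation/Neumann-series alternative you sketch at the end (freeze $x\in X$, observe that $z\mapsto L(z)+B(x,z)$ is a strict contraction on both $E$ and $X$, solve in $E$, and identify the two fixed points), is exactly the standard route used in that reference, so there is nothing substantive to compare against here.

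One remark: the lemma as transcribed in the present paper carries visible typos (the undefined symbol $a$ in the final bound, and the compatibility condition $\beta\gamma\le\lambda\eta$ is stated in a form that does not by itself force $\beta+\gamma\|x\|_X<1$, as your own computation hints). You are right to flag this as the delicate point; in the original \cite{GIP03} the smallness hypothesis is arranged precisely so that the $E\to E$ norm of $z\mapsto L(z)+B(x,z)$ is strictly less than one, which is all your argument needs.
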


\section{Mild solution of the 3D Navier-Stokes system \eqref{NS}  in  subcritical and critical framework}\label{sec.3}
\setcounter{section}{3}\setcounter{equation}{0}

In this section, we will mainly study the well-posed theory of problem \eqref{NS} associated with initial data in subcritical Fourier-Herz spaces $F\dot{B}^{s}_{p,q}(\RR^3)$.

First of all, we recall the well-posed theory of problem \eqref{NS} in the framework of critical Fourier-Herz space. In \cite{LZ17}, J. Li and X. Zheng obtain the following well-posed results to~\eqref{NS} in critical Fourier-Herz space $F\dot{B}^{s_p}_{p,q}(\RR^3)$ with $1\leq p,q\leq \infty$:
\begin{theorem}[\cite{LZ17}]\label{well-cri}
Let $u_0\in F\dot{B}^{s_p}_{p,q}(\RR^3)$ satisfying $\Div u_0=0$, $1\leq p,q\leq \infty$ and $1\leq q\leq 2$ if $p=1$.
\begin{enumerate}
  \item[\rm (1)] For $1\leq q<\infty$, there exists a $T^*=T^*(u_0)>0$ such that problem \eqref{NS} has a unique local in time mild solution $u$ satisfying
  $$u \in C\big([0,T^*);\,F\dot{B}^{s_p}_{p,q}(\RR^3)\big) \cap \widetilde{L}^r_{\rm loc}\big([0,T^*);\,F\dot{B}^{2+s_p}_{p,q}(\RR^3)\big) \quad\forall r\in [1,\infty].$$
  Moreover we have that
  \begin{equation}\label{blow}
  \lim_{\delta\to0+}\|u\|_{\widetilde{L}^r([0,T^*-\delta];\,F\dot{B}^{\frac2r+s_p}_{p,q})}=\infty\Longleftrightarrow \,T^*<\infty.
  \end{equation}
  \item [\rm (2)] For $1\leq q\leq \infty$, if there exists a positive constant $\eta_0$ such that $\|u_0\|_{F\dot{B}^{s_p}_{p,q}}<\eta_0$, then problem~\eqref{NS} has a unique global in time mild solution $u$ satisfying
  $$u \in C([0,\infty);\,F\dot{B}^{s_p}_{p,q}(\RR^3)) \cap \widetilde{L}^r([0,\infty);\,F\dot{B}^{2+s_p}_{p,q}(\RR^3)) \quad\forall r\in [1,\infty].$$
  \end{enumerate}
\end{theorem}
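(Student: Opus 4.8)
The plan is to recast \eqref{NS} in its mild (Duhamel) form as the fixed-point equation $u = e^{\nu t\Delta}u_0 + B(u,u)$, where, using $\Div u=0$ to write $u\cdot\nabla v=\Div(u\otimes v)$, the bilinear operator is
\[B(u,v)(t) = \int_0^t e^{\nu(t-s)\Delta}\,\mathbb{P}\,\Div(u\otimes v)(s)\,\mathrm{d}s,\]
and then to apply the abstract Banach scheme of Lemma \ref{fixed} (with $L=0$, so $\lambda=0$) in a Chemin--Lerner resolution space. The natural choice is
\[\mathcal{X}_T = C\big([0,T];F\dot{B}^{s_p}_{p,q}\big)\cap \widetilde{L}^{\rho}\big([0,T];F\dot{B}^{s_p+2/\rho}_{p,q}\big)\]
for a fixed representative exponent $\rho\in(1,\infty)$ (e.g. $\rho=2$), normed by the maximum of the two component norms. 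Everything then reduces to a linear bound for $e^{\nu t\Delta}u_0$ and a bilinear bound for $B$.

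For the linear term, the semigroup estimate \eqref{est.TS} gives $\|e^{\nu t\Delta}u_0\|_{\mathcal{X}_T}\leq C\|u_0\|_{F\dot{B}^{s_p}_{p,q}}$ uniformly in $T$. The crucial refinement, available precisely when $q<\infty$, is that this quantity \emph{tends to $0$ as $T\to 0^{+}$}: writing the Chemin--Lerner norm as an $\ell^q$ sum over dyadic blocks and using that $\|\dot{\Delta}_j e^{\nu t\Delta}u_0\|_{FL^p}$ carries the factor $e^{-c\nu t 2^{2j}}$, one controls the high-frequency tail and applies dominated convergence to the series. This short-time smallness is exactly what drives local well-posedness for \emph{arbitrarily large} data in part (1); when $q=\infty$ only the uniform bound survives, which still suffices for the small-data global statement (2).

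The heart of the argument is the bilinear estimate $\|B(u,v)\|_{\mathcal{X}_T}\leq C\,\|u\|_{\mathcal{X}_T}\|v\|_{\mathcal{X}_T}$. I would prove it in three steps: (i) $\mathbb{P}$ and the components of $\Div$ act as order-$0$ and order-$1$ Fourier multipliers that are bounded, respectively gain one derivative, on the $FL^p$ scale and commute harmlessly with the $\dot{\Delta}_j$; (ii) a product law in Fourier--Herz spaces, established on the Fourier side where multiplication is convolution, through a Bony-type paraproduct decomposition and Young's inequality on $FL^p$, which bounds $\|\dot{\Delta}_j(u\otimes v)\|_{FL^p}$ by a convolution of the dyadic pieces of $u$ and $v$; and (iii) the heat-kernel smoothing combined with a Hardy--Littlewood--Sobolev / Young-in-time convolution to absorb $\int_0^t$ and recover the index $s_p+2/\rho$. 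Summing in $\ell^q$ with Young's inequality for series closes the bound. Feeding the two estimates into Lemma \ref{fixed} yields a local solution on a small $[0,T]$ (large data, $q<\infty$, via the short-time smallness) and a global solution once $\|u_0\|_{F\dot{B}^{s_p}_{p,q}}<\eta_0$ is small (any $q$, since $4\eta\|y\|<1$ holds directly).

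The extra regularity $u\in\widetilde{L}^r_{\mathrm{loc}}([0,T^*);F\dot{B}^{2+s_p}_{p,q})$ for every $r$ then follows from the propagation principle of Lemma \ref{regul}: run the same bilinear estimate in the higher space $E=\widetilde{L}^r([0,T];F\dot{B}^{s_p+2/r}_{p,q})$, whose norm is finite for the constructed solution by the heat-kernel gain, and bootstrap. The blow-up criterion \eqref{blow} is then a standard continuation argument: if the Chemin--Lerner norm remained bounded as $t\uparrow T^*$, the uniform bilinear constant would permit re-solving with a time step of \emph{fixed} length from initial times arbitrarily close to $T^*$, extending the solution beyond $T^*$ and contradicting its maximality; hence $T^*<\infty$ forces the norm to diverge, while the converse is immediate from local existence. \textbf{The main obstacle} I anticipate is step (iii) of the bilinear estimate, namely verifying the product law in $F\dot{B}^s_{p,q}$ uniformly across the full range $1\leq p\leq\infty$ (the endpoints $p=1,\infty$, together with the restriction $1\leq q\leq 2$ when $p=1$, are exactly where the convolution structure on the Fourier side and Young's inequality become tight), and then matching the time-integrability exponents so that the $2/\rho$ derivatives gained from the heat semigroup are exactly balanced against the loss from $\Div$ and the time convolution.
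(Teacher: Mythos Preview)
The paper does not prove this theorem at all: Theorem~\ref{well-cri} is quoted verbatim from \cite{LZ17} as background, and the paper immediately moves on to establish the \emph{subcritical} analogue (Theorem~\ref{well-sub}) by exactly the method you describe---Duhamel formulation, Bony paraproduct decomposition of $B(u,v)$ into low-high, high-low, and high-high pieces, frequency-localized heat smoothing, and Lemma~\ref{fixed}. So your proposal is correct and matches the standard scheme that both \cite{LZ17} and the present paper (for the subcritical case) employ.

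One point worth sharpening: you locate the short-time smallness in the $\widetilde{L}^\rho$ component of $\mathcal{X}_T$, which is right, but note that in the critical case $s=s_p$ there is \emph{no} factor $T^{(s-s_p)/2}$ in the bilinear estimate (compare the paper's proof of Theorem~\ref{well-sub}, where precisely this factor furnishes the smallness). Consequently the fixed-point ball must be chosen with radius governed by $\|e^{\nu t\Delta}u_0\|_{\widetilde{L}^\rho([0,T];F\dot{B}^{s_p+2/\rho}_{p,q})}$ itself, and it is the $q<\infty$ dominated-convergence argument you sketch---not a power of $T$---that makes this small. This is exactly the mechanism you identify, but it is worth emphasizing that the bilinear constant is $T$-independent here, so the continuation argument for \eqref{blow} has to be phrased in terms of the smallness of the tail $\|u\|_{\widetilde{L}^\rho([T^*-\delta,T^*];\cdot)}$ rather than a short time step.
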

Next, we turn to show the local well-posedness of \eqref{NS} in subcritical spaces $F\dot{B}^{s}_{p,q}(\RR^3)$ with $s\in(s_p,0]$.
\begin{theorem}\label{well-sub}
	For any $u_0\in F\dot{B}^{s}_{p,q}(\RR^3)$ with $(s,p)\in \{[s_p,0]\times[1,\infty]\}\setminus (0,1)$ and $q\in [1,\infty]$, satisfying $\Div u_0=0$, there exists a $T^*=T^*(\nu,\|u_0\|_{F\dot{B}^s_{p,q}})>0$ such that system \eqref{NS} has a unique local in time mild solution $u$ satisfying
	$$u \in X_T\triangleq C([0,T);\,F\dot{B}^{s}_{p,q}(\RR^3)) \cap \widetilde{L}^r_{\rm loc}([0,T^*);\,F\dot{B}^{\frac2r+s}_{p,q}(\RR^3)) \quad\forall r\in [1,\infty].$$
	Moreover, $T^*$ and $u$ satisfy
	\[T^*\leq C(\nu)\|u_0\|_{F\dot{B}^s_{p,q}}^{-\frac 2{s-s_p}}\quad\text{and}\quad \|u\|_{X_T}\leq C(\nu)\|u_0\|_{F\dot{B}^s_{p,q}}.\]
\end{theorem}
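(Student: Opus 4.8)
The plan is to solve the mild formulation
\[
u=e^{\nu t\Delta}u_0+\mathcal{B}(u,u),\qquad \mathcal{B}(u,v)(t)\triangleq-\int_0^t e^{\nu(t-\tau)\Delta}\,\mathbb{P}\,\Div\big(u\otimes v\big)(\tau)\,\mathrm{d}\tau,
\]
by the Banach contraction argument of Lemma \ref{fixed}, carried out in a Chemin--Lerner space adapted to the subcritical index $s$. Concretely, I would fix $r\in[1,\infty]$ so that $s+\tfrac2r<3(1-\tfrac1p)$ (which makes $F\dot{B}^{s+\frac2r}_{p,q}$ a Banach space, the point $(s,p)=(0,1)$ being excluded precisely because there this fails) and work in $Y_T\triangleq\widetilde{L}^\infty([0,T];F\dot{B}^{s}_{p,q})\cap\widetilde{L}^r([0,T];F\dot{B}^{s+\frac2r}_{p,q})$, applying Lemma \ref{fixed} with $L\equiv0$ (so $\lambda=0$), $y=e^{\nu t\Delta}u_0$ and $B=\mathcal{B}$. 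The linear estimate is routine: \eqref{orth} and the heat-semigroup bound $\|\dot{\Delta}_j e^{\nu t\Delta}f\|_{FL^p}\lesssim e^{-c\nu2^{2j}t}\|\dot{\Delta}_j f\|_{FL^p}$, together with \eqref{est.TS}, give $\|y\|_{Y_T}\le C(\nu)\|u_0\|_{F\dot{B}^{s}_{p,q}}$ uniformly in $T$.

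The heart of the matter --- and the step I expect to be the main obstacle --- is the bilinear bound
\[
\|\mathcal{B}(u,v)\|_{Y_T}\le C(\nu)\,T^{\frac{s-s_p}{2}}\,\|u\|_{Y_T}\,\|v\|_{Y_T}.
\]
To obtain it I would first set up a product law in Fourier--Herz spaces --- coming from a Bony-type decomposition of the convolution $\widehat{u\otimes v}=\hat{u}\ast\hat{v}$ (on the Fourier side $L^p\ast L^p\hookrightarrow L^{p/(2-p)}$ by Young's inequality, with $FL^1$ a Banach algebra), and the boundedness of $\mathbb{P}$ and the Riesz transforms as Fourier multipliers on $F\dot{B}^{s}_{p,q}$ --- of the schematic form $\|u\otimes v\|_{F\dot{B}^{\sigma_1+\sigma_2-3/p'}_{p,q}}\lesssim\|u\|_{F\dot{B}^{\sigma_1}_{p,q}}\|v\|_{F\dot{B}^{\sigma_2}_{p,q}}$ for an admissible range of exponents (with $\tfrac1p+\tfrac1{p'}=1$), in the style of the nonlinear estimates of \cite{LZ17}. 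Feeding this into the Duhamel integral and using the elementary bound $\int_0^t e^{-c\nu2^{2j}(t-\tau)}\,\mathrm{d}\tau\le\min\{t,(c\nu2^{2j})^{-1}\}$, then splitting the output frequencies at $2^{2j}\sim(\nu T)^{-1}$ and summing the high- and low-frequency contributions separately, produces exactly the gain $T^{\frac{s-s_p}{2}}$. The delicate points are: choosing the auxiliary regularity $s+\tfrac2r$ so that the exponents fall inside the admissible range of the product law throughout $(s,p)\in\big([s_p,0]\times[1,\infty]\big)\setminus\{(0,1)\}$; and checking that both frequency sums converge --- which holds precisely because $0<s-s_p<2$ on that range --- and that the surviving power is $\tfrac12(s-s_p)$, no smaller.

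Granting these two estimates, we may take $\eta\le C(\nu)T^{\frac{s-s_p}{2}}$ and $\|y\|_{Y_T}\le C(\nu)\|u_0\|_{F\dot{B}^{s}_{p,q}}$, so the smallness condition $4\eta\|y\|_{Y_T}<1=(1-\lambda)^2$ of Lemma \ref{fixed} holds as soon as $T\le T^*\triangleq C(\nu)\|u_0\|_{F\dot{B}^{s}_{p,q}}^{-\frac{2}{s-s_p}}$; Lemma \ref{fixed} then furnishes, for every such $T$, a solution $u\in Y_T$ --- unique in the corresponding ball --- with $\|u\|_{Y_T}\le 2\|y\|_{Y_T}\le C(\nu)\|u_0\|_{F\dot{B}^{s}_{p,q}}$. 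To promote $u$ to the space $X_T$ of the statement I would feed it back into the Duhamel formula and rerun the linear and bilinear estimates at regularity $\tfrac2r+s$ for every $r\in[1,\infty]$ --- equivalently, apply the propagation principle of Lemma \ref{regul} with $E=\widetilde{L}^r([0,T];F\dot{B}^{\frac2r+s}_{p,q})$ --- while the strong time-continuity $u\in C([0,T^*);F\dot{B}^{s}_{p,q})$ follows from the uniform bound together with the usual density and dominated-convergence argument in $t$. Finally, taking the supremum over admissible $T$ defines the maximal existence time and completes the proof.
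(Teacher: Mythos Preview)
Your proposal is correct and follows the same route as the paper: Duhamel plus the fixed-point Lemma~\ref{fixed} in the Chemin--Lerner space $\widetilde{L}^\infty([0,T];F\dot{B}^s_{p,q})\cap\widetilde{L}^1([0,T];F\dot{B}^{s+2}_{p,q})$, with the subcritical bilinear gain $T^{\frac{s-s_p}{2}}$. The paper carries out the Bony paraproduct $B(u,v)=I+II+III$ explicitly and estimates each piece in both $\widetilde{L}^\infty$ and $\widetilde{L}^1$; the factor $T^{\frac{s-s_p}{2}}$ comes out of each heat integral directly via the uniform-in-$j$ bound $\int_0^T e^{-c\nu t2^{2j}}2^{j(2-(s-s_p))}\,\mathrm{d}t\lesssim_\nu T^{\frac{s-s_p}{2}}$ (i.e.\ $\min(T,2^{-2j})\le T^{1-\alpha/2}2^{-j\alpha}$), rather than by splitting frequencies at $2^{2j}\sim(\nu T)^{-1}$ as you outline --- the two devices are equivalent. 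The paper also does not invoke Lemma~\ref{regul} to reach all $r\in[1,\infty]$: it works directly in $\widetilde{L}^\infty\cap\widetilde{L}^1$ and the intermediate $\widetilde{L}^r$ follow by interpolation. These are minor tactical differences; the architecture is the same.
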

\begin{proof}
	By Duhamel formula, one writes equation \eqref{NS} in the integral form
	\begin{equation}\label{eq.Inter}
	u(x,t)=e^{\nu t\Delta}u_0+\int^t_0 e^{\nu(t-\tau)\Delta}\mathbb{P}(u\cdot\nabla u)\,\mathrm{d}\tau\triangleq G+B(u,u).
	\end{equation}
	Now we are going to apply Lemma \ref{fixed} to get the local well-posedness of problem  \eqref{NS} in subcritical spaces $F\dot{B}^{s}_{p,q}(\RR^3)$ with $s\in(s_p,0]$.
	
From estimate  \eqref{est.TS}, one has
	\begin{equation}\label{est.TS-SS}
	\|e^{t\Delta}u_0\|_{\widetilde{L}^r([0,T];\,F\dot{B}^s_{p,q})}\leq C\|u_0\|_{F\dot{B}^s_{p,q}}\quad \text{for all}\quad s\in \RR,\, 1\leq r,p,q\leq \infty.
\end{equation}
For the bilinear term $B(u,v)$,
 in terms of Bony-paraproduct decomposition, one writes
 $$B(u,v)=I+II+III,$$ where
\[I\triangleq\sum_{k\in \ZZ}\int^t_0 e^{-\nu(t-\tau)\Delta}\mathbb{P}(\dot{S}_ku \otimes\dot{\Delta}_k v)\,\mathrm{d}\tau,\qquad II\triangleq\sum_{k\in \ZZ}\int^t_0 e^{-\nu(t-\tau)\Delta}\mathbb{P}(\dot{S}_k v\otimes\dot{\Delta}_k u)\,\mathrm{d}\tau\]
\[III\triangleq \sum_{k\in \ZZ}\int^t_0 e^{-\nu(t-\tau)\Delta}\mathbb{P}(\dot{\Delta}_ku\otimes\tilde{\dot{\Delta}}_k v)\,\mathrm{d}\tau.\]
For $I$, by H\"{o}lder's and Young's inequalities, we have
\begin{align*}
&2^{js}\|\dot{\Delta}_jI\|_{L^{\infty}([0,T];\,FL^p)}\\
\lesssim_{\nu}&\int^T_0e^{-c\nu t2^{2j}}2^{j(4-s-\frac 3p)}\,\mathrm{d}t\sum_{|j-k|\leq 2}2^{(j-k)(2s-3+\frac 3p)}2^{k(s-3+\frac 3p)}\|\dot{S}_{k-1} u\|_{L^{\infty}([0,T];\,FL^1)}\\
&\times2^{ks}\|\dot{\Delta}_{k} v\|_{L^{\infty}([0,T];\,FL^p)}\\
\lesssim_{\nu}&T^{\frac{s-s_p}2}\sum_{|j-k|\leq 2}2^{(j-k)(2s-3+\frac 3p)}2^{k(s-3+\frac 3p)}\|\dot{S}_{k-1} u\|_{L^{\infty}([0,T];\,FL^1)}2^{ks}\|\dot{\Delta}_{k} v\|_{L^{\infty}([0,T];\,FL^p)}.
\end{align*}
Taking $\ell^q(\ZZ)$-norm on the above inequality, we get
\begin{equation}\label{eq.well-1}
\|I\|_{\widetilde{L}^{\infty}([0,T];\,F\dot{B}^s_{p,q})}\lesssim_{\nu}T^{\frac{s-s_p}2}\|u\|_{\widetilde{L}^{\infty}([0,T];\,F\dot{B}^s_{p,q})}
\|v\|_{\widetilde{L}^{\infty}([0,T];\,F\dot{B}^s_{p,q})}.
\end{equation}
Moreover, we have
\begin{align*}
&2^{j(s+2)}\|\dot{\Delta}_jI\|_{L^1([0,T];\,FL^p)}\\
\lesssim_{\nu}& \int^t_0 e^{-c\nu t2^{2j}}2^{j(4-s-\frac 3p)}\,\mathrm{d}t \sum_{|j-k|\leq 2}2^{(j-k)(2s-1+\frac 3p)}2^{k(s-3+\frac 3p)}\|\dot{S}_{k-1} u\|_{L^{\infty}([0,T];\,FL^1)}\\
&\times2^{k(s+2)}\|\dot{\Delta}_{k} v\|_{L^1([0,T];\,FL^p)}\\
\lesssim_{\nu}& T^{\frac{s-s_p}2}\sum_{|j-k|\leq 2}2^{(j-k)(2s-1+\frac 3p)}2^{k(s-3+\frac 3p)}\|\dot{S}_{k-1} u\|_{L^{\infty}([0,T];\,FL^1)}2^{k(s+2)}\|\dot{\Delta}_{k} v\|_{L^1([0,T];\,FL^p)}.
\end{align*}
Hence, we have
\begin{equation}\label{eq.well-1'}
\|I\|_{\widetilde{L}^1([0,T];\,F\dot{B}^{s+2}_{p,q})}\lesssim_{\nu}T^{\frac{s-s_p}2}\|u\|_{\widetilde{L}^{\infty}([0,T];\,F\dot{B}^s_{p,q})}
\|v\|_{\widetilde{L}^{1}([0,T];\,F\dot{B}^{s+2}_{p,q})}.
\end{equation}
Similarly, we can show
\begin{equation}\label{eq.well-2}
\|II\|_{\widetilde{L}^{\infty}([0,T];\,F\dot{B}^s_{p,q})}\lesssim_{\nu}T^{\frac{s-s_p}2}
\|u\|_{\widetilde{L}^{\infty}([0,T];\,F\dot{B}^s_{p,q})}
\|v\|_{\widetilde{L}^{\infty}([0,T];\,F\dot{B}^s_{p,q})}
\end{equation}
and
\begin{equation}\label{eq.well-2'}
\|II\|_{\widetilde{L}^1([0,T];\,F\dot{B}^{s+2}_{p,q})}\lesssim_{\nu}T^{\frac{s-s_p}2}
\|u\|_{\widetilde{L}^1([0,T];\,F\dot{B}^{s+2}_{p,q})}
\|v\|_{\widetilde{L}^{\infty}([0,T];\,F\dot{B}^s_{p,q})}.
\end{equation}
For the remainder term $III$, for any $r>2$ satisfying $-s<\frac 2r<\frac {s_p-s}2+1$, we have by H\"older's and Young's inequalities that
\begin{align*}
&2^{js}\|\dot{\Delta}_jIII\|_{L^{\infty}([0,T];\,FL^p)}\\
\lesssim_{\nu}&\Big(\int^T_0 e^{-c\nu t2^{2j}\frac r{r-2}}2^{j(4-s-\frac 3p-\frac 4r)\frac r{r-2}}\,\mathrm{d}t\Big)^{\frac {r-2}r}\sum_{k>j-2}\Big(2^{(j-k)(2s+\frac 4r)}2^{k(s+\frac 2r)}\|\dot{\Delta}_{k} u\|_{L^r([0,T];\,FL^p)}\\
&\qquad\qquad\qquad\qquad\qquad\qquad\qquad\qquad\qquad\quad\,\times 2^{k(s+\frac 2r)}\|\tilde{\dot{\Delta}}_{k} v\|_{L^r([0,T];\,FL^p)}\Big)\\
\lesssim_{\nu}& \Big(\int^T_0 t^{-1+\frac{s-s_p}2\frac{r}{r-2}}\,\mathrm{d}t\Big)^{1-\frac2r}\sum_{k>j- 2}\Big(2^{(j-k)(2s+\frac 4r)}2^{k(s+\frac 2r)}\|\dot{\Delta}_{k} u\|_{L^r([0,T];\,FL^p)}\\
&\qquad\qquad\qquad\qquad\qquad\qquad\qquad\times2^{k(s+\frac 2r)}\|\tilde{\dot{\Delta}}_{k} v\|_{L^r([0,T];\,FL^p)}\Big)\\
\lesssim_{\nu} &T^{\frac{s-s_p}2}\sum_{k>j-2}2^{(j-k)(2s+\frac 4r)}\|\dot{\Delta}_{k} u\|_{L^r([0,T];\,FL^p)}2^{k(s+\frac 2r)}\|\tilde{\dot{\Delta}}_{k} v\|_{L^r([0,T];\,FL^p)}.
\end{align*}
So, by the interpolation and Young inequalities, we get
\begin{equation}\label{eq.well-3}
\begin{split}
&\|III\|_{\widetilde{L}^{\infty}([0,T];\,F\dot{B}^s_{p,q})}\\
\lesssim_{\nu}& T^{\frac{s-s_p}2}\|u\|_{\widetilde{L}^{r}([0,T];\,F\dot{B}^{s+\frac 2r}_{p,q})}
\|v\|_{\widetilde{L}^{r}([0,T];\,F\dot{B}^{s+\frac 2r}_{p,q})}\\
\lesssim_{\nu}&T^{\frac{s-s_p}2}\|u\|_{\widetilde{L}^{\infty}([0,T];\,F\dot{B}^s_{p,q})
	\cap\widetilde{L}^1([0,T];\,F\dot{B}^{s+2}_{p,q})}
\|v\|_{\widetilde{L}^{\infty}([0,T];\,F\dot{B}^s_{p,q})\cap\widetilde{L}^1([0,T];\,F\dot{B}^{s+2}_{p,q})}.
\end{split}
\end{equation}
Furthermore, we have
\begin{align*}
&2^{j(s+2)}\|\dot{\Delta}_jIII\|_{L^1([0,T];\,FL^p)}\\
\lesssim_{\nu}& \int^T_0 e^{-c\nu t2^{2j}}2^{j(4-s-\frac 3p)}\,\mathrm{d}t\sum_{k>j-2}2^{(j-k)(2s+2)}2^{ks}\|\dot{\Delta}_{k} u\|_{L^{\infty}([0,T];\,FL^p)}2^{k(s+2)}\|\tilde{\dot{\Delta}}_{k} v\|_{L^1([0,T];\,FL^p)}\\
\lesssim_{\nu}&T^{\frac{s-s_p}2}\sum_{k>j-2}2^{(j-k)(2s+2)}2^{ks}\|\dot{\Delta}_{k} u\|_{L^{\infty}([0,T];\,FL^p)}2^{k(s+2)}\|\tilde{\dot{\Delta}}_{k} v\|_{L^1([0,T];\,FL^p)}.
\end{align*}
From this, we obtain that
\begin{equation}\label{eq.well-3'}
\|III\|_{\widetilde{L}^1([0,T];\,F\dot{B}^{s+2}_{p,q})}\lesssim_{\nu}T^{\frac{s-s_p}2}\|u\|_{\widetilde{L}^{\infty}([0,T];\,F\dot{B}^s_{p,q})}
\|v\|_{\widetilde{L}^1([0,T];\,F\dot{B}^{s+2}_{p,q})}.
\end{equation}
Collecting \eqref{eq.well-1}-\eqref{eq.well-3'}, we finally obtain that
\begin{align*}
&\|B(u,v)\|_{\widetilde{L}^{\infty}([0,T];F\dot{B}^s_{p,q})\cap \widetilde{L}^1([0,T];F\dot{B}^{s+2}_{p,q})}\\
\lesssim_{\nu}&T^{\frac{s-s_p}2}\|u\|_{\widetilde{L}^{\infty}([0,T];F\dot{B}^s_{p,q})\cap \widetilde{L}^1([0,T];F\dot{B}^{s+2}_{p,q})}\|v\|_{\widetilde{L}^{\infty}([0,T];F\dot{B}^s_{p,q})\cap \widetilde{L}^1([0,T];F\dot{B}^{s+2}_{p,q})}.
\end{align*}
By choosing a suitable $T>0$,  we obtain  that there exists a unique local mild solution $u\in X_T$ of problem \eqref{NS} in subcritical spaces $F\dot{B}^{s}_{p,q}(\RR^3)$ with  $s\in(s_p,0]$ via the Banach fixed point theorem.
 \end{proof}
Next, we will give the analyticity and decay estimates for the solution of the mild solution   to problem \eqref{NS}  constructed by Theorem \ref{well-cri}  and  Theorem \ref{well-sub}.
\begin{proposition}\label{pro.ana}
Let $ s\in [s_p ,0]$. Assume that $u$ is the mild solution on $\RR^3\times [0,T^*)$ to \eqref{NS}  constructed by Theorem \ref{well-cri} or Theorem \ref{well-sub}. Then,
\begin{equation}\label{eq.ana-1}
\|tu(t)\|_{\widetilde{L}^{\infty}([0,T];F\dot{B}^{s+2}_{p,q})}<\infty,
\end{equation}
for each $T<T^*.$
\end{proposition}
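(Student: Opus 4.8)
The statement \eqref{eq.ana-1} is an analyticity (Gevrey-regularity) bound in disguise. Write $\Lambda=(-\Delta)^{1/2}$. On the dyadic shell $|\xi|\sim 2^{j}$ one has $e^{\sqrt{\nu t}\,|\xi|}\gtrsim e^{\frac34\sqrt{\nu t}\,2^{j}}$, so $\|\dot{\Delta}_{j}u(t)\|_{FL^{p}}\le e^{-\frac34\sqrt{\nu t}\,2^{j}}\|\dot{\Delta}_{j}e^{\sqrt{\nu t}\,\Lambda}u(t)\|_{FL^{p}}$; multiplying by $2^{j(s+2)}t$, using $t\,2^{2j}e^{-\frac34\sqrt{\nu t}\,2^{j}}=(\sqrt{t}\,2^{j})^{2}e^{-\frac34\sqrt{\nu t}\,2^{j}}\lesssim_{\nu}1$, and summing in $\ell^{q}$ gives
\[
\|t\,u\|_{\widetilde{L}^{\infty}([0,T];\,F\dot{B}^{s+2}_{p,q})}\lesssim_{\nu}\big\|e^{\sqrt{\nu t}\,\Lambda}u\big\|_{\widetilde{L}^{\infty}([0,T];\,F\dot{B}^{s}_{p,q})}.
\]
So the plan is to prove that the right-hand side is finite for every $T<T^{*}$.

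To that end I would reproduce, for the ``Gevrified'' unknown $v:=e^{\sqrt{\nu t}\,\Lambda}u$, the fixed-point scheme behind Theorems \ref{well-cri} and \ref{well-sub}. Applying $e^{\sqrt{\nu t}\,\Lambda}$ to the Duhamel identity \eqref{eq.Inter} and using the elementary Foias--Temam inequalities
\[
e^{\sqrt{\nu t}\,|\xi|}e^{-\nu t|\xi|^{2}}\lesssim 1,\qquad
e^{\sqrt{\nu t}\,|\xi|}e^{-\nu(t-\tau)|\xi|^{2}}\lesssim e^{\sqrt{\nu\tau}\,|\xi|}e^{-\frac12\nu(t-\tau)|\xi|^{2}}\quad(0\le\tau\le t),
\]
which follow from $\sqrt{\nu t}-\sqrt{\nu\tau}\le\sqrt{\nu(t-\tau)}$ and $\sup_{r\ge0}(r-\tfrac12 r^{2})<\infty$, together with the sub-additivity $e^{\sqrt{\nu\tau}\,|\xi|}\le e^{\sqrt{\nu\tau}\,|\xi-\eta|}e^{\sqrt{\nu\tau}\,|\eta|}$ on the convolution that defines $u\otimes u$, one sees that $v$ obeys a mild formulation with a bounded linear part (the surviving factor $e^{-\frac12\nu(t-\tau)|\xi|^{2}}$ still provides the parabolic smoothing) and a nonlinearity dominated by \emph{the same} paraproduct expressions estimated in the proofs of Theorems \ref{well-cri}/\ref{well-sub}, with $u$ replaced by $v$. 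Hence, on a short interval $[0,\delta]$, the linear and bilinear estimates already obtained there apply verbatim to $v$, and the corresponding fixed-point argument yields $v\in X_{\delta}$ with $\|v\|_{X_{\delta}}\lesssim\|u_{0}\|_{F\dot{B}^{s}_{p,q}}$; in particular $\big\|e^{\sqrt{\nu t}\,\Lambda}u\big\|_{\widetilde{L}^{\infty}([0,\delta];\,F\dot{B}^{s}_{p,q})}<\infty$, and by uniqueness this $v$ is indeed the Gevrification of the solution $u$ furnished by Theorems \ref{well-cri}/\ref{well-sub}.

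To reach an arbitrary $T<T^{*}$ I would iterate with a shifted weight. For $t\ge\delta$ the function $u$ solves the integral equation on $[\delta,T^{*})$ with datum $u(\delta)$, which still lies in the base space $F\dot{B}^{s}_{p,q}$ (resp.\ $F\dot{B}^{s_{p}}_{p,q}$ when $s=s_{p}$); repeating the previous step for $e^{\sqrt{\nu(t-\delta)}\,\Lambda}u$ gives $(t-\delta)\|u(t)\|_{F\dot{B}^{s+2}_{p,q}}\lesssim_{\nu}\|u(\delta)\|_{F\dot{B}^{s}_{p,q}}$ on some $[\delta,\delta']$. Since the lifespan in Theorems \ref{well-cri}/\ref{well-sub} is bounded below in terms of the base-space norm and that norm stays finite up to $T^{*}$ (the blow-up criterion \eqref{blow}), finitely many such steps cover $[0,T]$; patching these bounds — using that $\|u(a)\|_{F\dot{B}^{s}_{p,q}}$ (resp.\ $\|u(a)\|_{F\dot{B}^{s_{p}}_{p,q}}$) is finite and uniformly bounded for $a\in[0,T]$ by the continuity of $u$ — controls $\|t\,u(t)\|_{F\dot{B}^{s+2}_{p,q}}$ on all of $[0,T]$, which is \eqref{eq.ana-1}. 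An alternative route, avoiding the Gevrey weight entirely, is to estimate \eqref{eq.Inter} directly with the weight $t$ via $t=(t-\tau)+\tau$: the $(t-\tau)$-piece is absorbed by one power of heat smoothing, while the $\tau$-piece is transferred onto one factor of $u\otimes u$ and estimated against $\sup_{0<\tau<T}\tau\|u(\tau)\|_{F\dot{B}^{s+2}_{p,q}}$; this again reduces to the bilinear estimates already in hand, and the globalization is the same.

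The main obstacle is entirely in the bilinear estimate for $v$ (or, in the alternative route, in the weight transfer): one must verify, term by term in the Bony decomposition, that Gevrification does not destroy the estimates of Theorems \ref{well-cri}/\ref{well-sub}. The low--high and high--low contributions are immediate; the delicate one is the high--high ``remainder'' term $III$, because in the critical endpoint $s=s_{p}$ the subcritical argument — which relies on the gain $T^{\frac{s-s_{p}}{2}}$ available only for $s>s_{p}$ — breaks down, and one must instead invoke the critical-space mechanism behind Theorem \ref{well-cri}. A secondary, routine point is to run the iteration past $t=0$ without circularity, which is cleanest if the fixed point is carried out directly in the Gevrey-weighted space rather than a posteriori.
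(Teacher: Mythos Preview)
Your proposal is correct and takes a genuinely different route from the paper. The paper does not Gevrify: it follows exactly what you call the ``alternative route'', estimating $2^{j(s+2)}t\|\dot\Delta_j B(u,u)(t)\|_{FL^p}$ directly, paraproduct term by term, by splitting the Duhamel integral as $\int_0^t=\int_0^{t/2}+\int_{t/2}^t$ (the analogue of your $t=(t-\tau)+\tau$): on $[0,t/2]$ the heat factor $e^{-c\nu(t-\tau)2^{2j}}$ absorbs the extra $t\,2^{2j}$, while on $[t/2,t]$ the weight is passed onto one factor of $u$, producing terms in $\|t^{1/r}u\|_{\widetilde L^\infty F\dot B^{s+2/r}_{p,q}}$ which are interpolated between $\|tu\|_{\widetilde L^\infty F\dot B^{s+2}_{p,q}}$ and $\|u\|_{Y_T}$. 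The conclusion is then obtained in one shot via the propagation-of-regularity Lemma~\ref{regul} (with $E$ the $t$-weighted space), not by iterating on subintervals. Your Gevrey approach is more conceptual and yields strictly more --- an actual analyticity radius $\sim\sqrt{\nu t}$, from which \eqref{eq.ana-1} follows by a one-line Bernstein argument --- whereas the paper's direct method avoids the Foias--Temam machinery and, through Lemma~\ref{regul}, handles the critical endpoint $s=s_p$ uniformly without any subinterval iteration. One caution on your iteration: your statement that the lifespan is bounded below in terms of the base-space norm is only true in the subcritical case; for $s=s_p$ you instead need the compactness of $\{u(t):t\in[0,T]\}$ in $F\dot B^{s_p}_{p,q}$ (available since $q<\infty$) to get a uniform lower bound on the step size.
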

\begin{proof}
Set
$$Y_T\triangleq \widetilde{L}^{\infty}([0,T];\,F\dot{B}^s_{p,q}(\RR^3))\cap\widetilde{L}^1([0,T];\,
F\dot{B}^{s+2}_{p,q}(\RR^3)).$$
We to do To do this, we need to show that:
\begin{enumerate}[\rm (i)]
\item there exists a constant $C(\nu)>0$ such that
\begin{align*}
&\max\left\{ \|tB(u,v)\|_{\widetilde{L}^{\infty}([0,T];\,F\dot{B}^{s+2}_{p,q})},
\|tB(v,u)\|_{\widetilde{L}^{\infty}([0,T];\,F\dot{B}^{s+2}_{p,q})}\right\}\\
\leq& C(\nu) T^{\frac{s-s_p}2}\|u\|_{Y_{T}}\|tv\|_{\widetilde{L}^{\infty}([0,T];\,F\dot{B}^{s+2}_{p,q})}+C(\nu) T^{\frac{s-s_p}2}\|u\|_{Y_{T}}\|v\|_{Y_{T}}.
\end{align*}
\item $te^{t\Delta}u_0\in\tilde{L}^{\infty}([0,T];\,F\dot{B}^{s+2}_{p,q})$.
\end{enumerate}
Let us start by estimating $B(u,v)$.
By Holder's and Young's inequality, the term $I$ can be bounded as follows
\begin{align*}
&2^{j(s+1)}t\|\dot{\Delta}_jI\|_{FL^p}\\
\lesssim& t\sum_{|j-k|\leq 2}2^{(j-k)(2s-3+\frac3p+\frac2r)}
2^{j(-s+6-\frac3p-\frac2r)}\Big(\int^{t/2}_0 e^{-c\nu(t-\tau)2^{2j}}\tau^{-\frac1r}\,\mathrm{d}\tau+
\int^{t}_{t/2} e^{-c\nu(t-\tau)2^{2j}}\tau^{-\frac1r}\,\mathrm{d}\tau\Big) \\
&\times 2^{k(s-3+\frac3p)}\|\dot{S}_{k-1}u\|_{L^{\infty}([0,T_1];\,FL^1)}2^{k(s+\frac2r)}\|t^{\frac 1 r}\dot{\Delta}_k v\|_{L^{\infty}([0,T_1];\,FL^p)}\\
\lesssim&_{\nu}t\sum_{|j-k|\leq 2}2^{(j-k)(2s-3+\frac3p+\frac2r)}
\big(2^{j(-s+6-\frac3p-\frac2r)}e^{-c\nu t2^{2j}}t^{1-\frac1r}+
2^{j(-s+4-\frac3p-\frac2r)}e^{-c\nu t2^{2j}}t^{-\frac1r}\big) \\
&\times 2^{k(s-3+\frac3p)}\|\dot{S}_{k-1}u\|_{L^{\infty}([0,T];\,FL^1)}2^{k(s+\frac2r)}\|t^{\frac 1 r}\dot{\Delta}_k v\|_{L^{\infty}([0,T];\,FL^p)}.
\end{align*}
Choosing  $r\in (\frac{2p}{3p-3},\infty)$ such that $-s+4-\frac3p-\frac2r>0$ and $-s+6-\frac3p-\frac2r>0$, we have
\begin{align*}
&2^{j(s+1)}t\|\dot{\Delta}_jI\|_{FL^p}\\
\lesssim&_{\nu} t^{\frac{s-s_p}2}\sum_{|j-k|\leq 2}2^{(j-k)(2s-3+\frac3p+\frac2r)}
2^{k(s-3+\frac3p)}\|\dot{S}_{k-1}u\|_{L^{\infty}([0,T];\,FL^1)}2^{k(s+\frac2r)}\|t^{\frac1r}\dot{\Delta}_k v\|_{L^{\infty}([0,T];\,FL^p)}.
\end{align*}
Taking $L^{\infty}[0,T]$ and then $\ell^q(\ZZ)$-norm on the above inequality, we obtain that
\[\|tI\|_{\widetilde{L}^{\infty}([0,T];\,F\dot{B}^{s+2}_{p,q})}\lesssim_{\nu}T^{\frac{s-s_p}2}\|u\|_{\widetilde{L}^{\infty}([0,T];\,F\dot{B}^{s}_{p,q})}
\|t^{\frac 1r}v\|_{\widetilde{L}^{\infty}([0,T];\,F\dot{B}^{s+\frac 1r}_{p,q})}.\]
By the interpolation and Young's inequalities, we get
\begin{equation}\label{eq.ana1}
\begin{split}
\|tI\|_{\widetilde{L}^{\infty}([0,T];\,F\dot{B}^{s+2}_{p,q})}
\lesssim&_{\nu}T^{\frac{s-s_p}2}\|u\|_{\widetilde{L}^{\infty}([0,T];\,F\dot{B}^{s}_{p,q})}
\|tv\|^{1/r}_{\widetilde{L}^{\infty}([0,T];\,F\dot{B}^{s+2}_{p,q})}
\|v\|^{1-1/r}_{\widetilde{L}^{\infty}([0,T];\,F\dot{B}^{s}_{p,q})}\\
\lesssim &_{\nu}T^{\frac{s-s_p}2}\|u\|_{\widetilde{L}^{\infty}([0,T];\,F\dot{B}^{s}_{p,q})}
\|tv\|_{\widetilde{L}^{\infty}([0,T];\,F\dot{B}^{s+2}_{p,q})}\\
&+T^{\frac{s-s_p}2}\|u\|_{\widetilde{L}^{\infty}([0,T];\,F\dot{B}^{s}_{p,q})}
\|v\|_{\widetilde{L}^{\infty}([0,T];\,F\dot{B}^{s}_{p,q})}.
\end{split}
\end{equation}
For $II$, similar to $I$, for any $r\in (\frac{2p}{3p-3},\infty)$, we have
\begin{align*}
&2^{j(s+1)}t\|\dot{\Delta}_jII\|_{FL^p}\\
\lesssim&_{\nu} t^{\frac{s-s_p}2}\sum_{|j-k|\leq 2}2^{(j-k)(2s-3+\frac3p+\frac2r)}
2^{k(s-3+\frac3p+\frac2r)}\| t^{\frac1r}\dot{S}_{k-1}v\|_{L^{\infty}([0,T];\,FL^1)}2^{ks}\|\dot{\Delta}_k v\|_{L^{\infty}([0,T];\,FL^p)}.
\end{align*}
Since $r\in (\frac{2p}{3p-3},\infty)$, which ensures $s-3+\frac3p+\frac2r<0$,
taking $L^{\infty}[0,T]$ and then $\ell^q(\ZZ)$ on above inequality, by interpolation inequality and Young's inequality, we get
\begin{equation}\label{eq.ana2}
\begin{split}
\|t II\|_{\widetilde{L}^{\infty}([0,T];\,F\dot{B}^{s+2}_{p,q})}\lesssim&_{\nu} T^{\frac{s-s_p}2}\|u\|_{\widetilde{L}^{\infty}([0,T];\,F\dot{B}^{s}_{p,q})}
\|tv\|^{1/r}_{\widetilde{L}^{\infty}([0,T];\,F\dot{B}^{s+2}_{p,q})}
\|v\|^{1-1/r}_{\widetilde{L}^{\infty}([0,T];\,F\dot{B}^{s}_{p,q})}\\
\lesssim &_{\nu}T^{\frac{s-s_p}2}\|u\|_{\widetilde{L}^{\infty}([0,T];\,F\dot{B}^{s}_{p,q})}
\|tv\|_{\widetilde{L}^{\infty}([0,T];\,F\dot{B}^{s+2}_{p,q})}\\
&+T^{\frac{s-s_p}2}\|u\|_{\widetilde{L}^{\infty}([0,T];\,F\dot{B}^{s}_{p,q})}
\|v\|_{\widetilde{L}^{\infty}([0,T];\,F\dot{B}^{s}_{p,q})}.
\end{split}
\end{equation}
For the remainder term $III$, by H\"{o}lder's and Young's inequalities, for any $r\in(\frac{2p}{3p-3},\infty)$ and $r_1>\frac{r}{r-1}$, we have
\begin{align*}
&2^{j(s+2)}t\|\dot{\Delta}_jIII\|_{FL^p}\\
\lesssim_{\nu}& t\sum_{|j-k|\leq 2}2^{(j-k)(2s+\frac2r+\frac2{r_1})}2^{j(6-\frac3p-s-\frac2r-\frac2{r_1})}
\bigg(\int^t_0 e^{-c\nu(t-\tau)2^{2j}\frac{r_1}{r_1-1}}\tau^{-\frac{r_1}{(r_1-1)r}}\,\mathrm{d}\tau\bigg)^{\frac{r_1-1}{r_1}}\\
&\times2^{k(s+\frac2r)}\|t^{\frac1r}\tilde{\dot{\Delta}}_{k}v\|_{L^{\infty}([0,T];\,FL^p)}
2^{k(s+\frac2{r_1})}\|\dot{\Delta}_k u\|_{L^{r_1}([0,T];\,FL^p)}.
\end{align*}
Due to $\frac{r_1}{r_1-1}<r$, we have that
\begin{align*}
&\Big(\int^t_0 e^{-c\nu(t-\tau)2^{2j}\frac{r_1}{r_1-1}}\tau^{-\frac{r_1}{(r_1-1)r}}\,\mathrm{d}\tau\Big)^{\frac{r_1-1}{r_1}}\\
\leq &\Big(\int^{\frac t2}_0 e^{-c\nu(t-\tau)2^{2j}\frac{r_1}{r_1-1}}\tau^{-\frac{r_1}{(r_1-1)r}}\,\mathrm{d}\tau\Big)^{\frac{r_1-1}{r_1}}
+\Big(\int^t_{\frac t2} e^{-c\nu(t-\tau)2^{2j}\frac{r_1}{r_1-1}}\tau^{-\frac{r_1}{(r_1-1)r}}\,\mathrm{d}\tau\Big)^{\frac{r_1-1}{r_1}}\\
\leq&C e^{-\frac c2\nu t2^{2j}}t^{1-\frac{1}{r}-\frac{1}{r_1}}+C2^{-j(2-\frac{2}{r_1})}e^{-c2\nu t2^{2j}}t^{-\frac{1}{r}}.
\end{align*}
Substituting this estimate into the above inequality, thanks to that $6-s-\frac3p+\frac2r+\frac2{r_1}>0$ and $4-s-\frac3p-\frac2r>0$, we can get that
\begin{align*}
&2^{j(s+2)}t\|\dot{\Delta}_jIII\|_{FL^p}\\
\lesssim&_{\nu} t\sum_{k>j-2}2^{(j-k)(2s+\frac2r+\frac2{r_1})}
\big(2^{j(6-\frac3p-s-\frac2r-\frac2{r_1})}e^{-\frac c2\nu t2^{2j}}t^{1-\frac{1}{r}-\frac{1}{r_1}}+2^{j(4-\frac3p-s-\frac2r)}e^{-c2\nu t2^{2j}}t^{-\frac{1}{r}}\big)\\
&\times2^{k(s+\frac2r)}\|t^{\frac1r}\tilde{\dot{\Delta}}_{k}v\|_{L^{\infty}([0,T];\,FL^1)}
2^{k(s+\frac2{r_1})}\|\dot{\Delta}_k u\|_{L^{r_1}([0,T];\,FL^p)}.\\
\lesssim&_{\nu} t^{\frac{s-s_p}2}\sum_{k>j-2}2^{(j-k)(2s+\frac2r+\frac2{r_1})}
2^{k(s+\frac2r)}\|t^{\frac1r}\tilde{\dot{\Delta}}_{k}v\|_{L^{\infty}([0,T];\,FL^p)}2^{k(s+\frac2{r_1})}\|\dot{\Delta}_k u\|_{L^{r_1}([0,T];\,FL^p)}.
\end{align*}
Then, we choose $r_1\in (\frac{r}{r-1},\infty)$ such that $s+\frac1r+\frac1{r_1}>0$, that is $\frac1{r_1}\in (\frac3p-2-\frac1r,1-\frac 1r)$, we get that
\[\|tIII\|_{\widetilde{L}^{\infty}([0,T];\,F\dot{B}^{s+2}_{p,q})}
\lesssim_{\nu}T ^{\frac{s-s_p}2}\|t^{\frac1r}v\|_{\widetilde{L}^{\infty}([0,T ];\,F\dot{B}^{s+\frac2r}_{p,q})}
\|u\|_{\widetilde{L}^{r_1}([0,T];\,F\dot{B}^{s+\frac2{r_1}}_{p,q})}.\]
By the interpolation and Young inequalities, we have
\begin{equation}\label{eq.ana3}
\begin{split}
\|tIII\|_{\widetilde{L}^{\infty}([0,T ];\,F\dot{B}^{s+2}_{p,q})}
\lesssim&_{\nu}T ^{\frac{s-s_p}2}\|tv\|^{1/r}_{\widetilde{L}^{\infty}([0,T];\,F\dot{B}^{s+\frac2r}_{p,q})}
\|v\|^{1-1/r}_{\widetilde{L}^{\infty}([0,T];\,F\dot{B}^{s}_{p,q})}
\|u\|_{\widetilde{L}^{r_1}([0,T];\,F\dot{B}^{s+\frac2{r_1}}_{p,q})}.\\
\lesssim&_{\nu}T ^{\frac{s-s_p}2}\|tv\|_{\widetilde{L}^{\infty}([0,T];\,F\dot{B}^{s+\frac2r}_{p,q})}
\|u\|_{\widetilde{L}^{r_1}([0,T];\,F\dot{B}^{s+\frac2{r_1}}_{p,q})}\\
&+T ^{\frac{s-s_p}2}\|v\|_{\widetilde{L}^{\infty}([0,T];\,F\dot{B}^{s}_{p,q})}
\|u\|_{\widetilde{L}^{r_1}([0,T];\,F\dot{B}^{s+\frac2{r_1}}_{p,q})}.
\end{split}
\end{equation}
Collecting estimates \eqref{eq.ana1}-\eqref{eq.ana3}, we immediately obtain
\begin{equation}\label{eq.ana4}
\|t B(u,v)\|_{\widetilde{L}^{\infty}([0,T ];\,F\dot{B}^{s+2}_{p,q})}\lesssim_{\nu} T^{\frac{s-s_p}2}\|u\|_{Y_{T }}
\|tv\|_{\widetilde{L}^{\infty}([0,T];\,F\dot{B}^{s+2}_{p,q})}
+T^{\frac{s-s_p}2}\|u\|_{Y_{T }}\|v\|_{Y_{T }}.
\end{equation}
In the same way,  we can show
\begin{equation}\label{eq.ana5}
\|tB(v,u)\|_{\widetilde{L}^{\infty}([0,T];\,F\dot{B}^{s+2}_{p,q})}\lesssim_{\nu} T^{\frac{s-s_p}2}\|u\|_{Y_{T }}
\|tv\|_{\widetilde{L}^{\infty}([0,T];\,F\dot{B}^{s+2}_{p,q})}+T^{\frac{s-s_p}2}\|u\|_{Y_{T }}\|v\|_{Y_{T }}.
\end{equation}
Finally, we tackle with the  term $e^{\nu t\Delta}u_0$. Note that
\begin{align*}
2^{j(s+2)}t\|\dot{\Delta}_je^{\nu t\Delta} u_0\|_{FL^p}\lesssim 2^{j(s+2)}te^{-c\nu t2^{2j}}\|\dot{\Delta}_ju_0\|_{FL^p}\lesssim_{\nu}2^{js}\|\dot{\Delta}_ju_0\|_{FL^p},
\end{align*}
we have
\begin{equation}\label{eq.ana6}
\|te^{\nu t\Delta} u_0\|_{\widetilde{L}^{\infty}([0,T];\,F\dot{B}^{s+2}_{p,q})}\lesssim_{\nu}\|u_0\|_{F\dot{B}^{s}_{p,q}}.
\end{equation}
Combining estimates \eqref{eq.ana4}-\eqref{eq.ana6}, by Lemma \ref{regul} and continuity argument, we finally get
$$\|t u(t)\|_{\widetilde{L}^{\infty}([0,T];\,F\dot{B}^{s+2}_{p,q})}<\infty.$$
Thus we complete the proof of Proposition \ref{pro.ana}.
\end{proof}
\begin{remark}\label{rem.ana}
 From Proposition \ref{pro.ana}, we see that for any $q<\infty$
\[\lim_{T\to 0+}\|tu(t)\|_{\widetilde{L}^{\infty}([0,T];\,F\dot{B}^{s+2}_{p,q})}=0.\]
 For any $\varepsilon>0$, from \eqref{eq.ana6} and the fact that $q<\infty$, we can choose $N\in \NN$ such that
\begin{equation}\label{eq.G-1}
\sum_{|j|\geq N}2^{j(s_p+2)q}\|te^{\nu t\Delta} u_0\|^q_{L^{\infty}([0,T];\,FL^p)}<\varepsilon^q/2^q.
\end{equation}
Since $te^{\nu t\Delta}$ is tends to zero  as $t\to 0+$, there exists a $\delta>0$ such that for any $t\in[0,\delta)$
\[\|te^{\nu t\Delta} u_0\|_{FL^p}\leq 2^{-1+(2N-1)(s_p+2)}\varepsilon,\]
which implies that for any $0\leq T<\delta$
\[\|te^{\nu t\Delta} u_0\|^q_{L^{\infty}([0,T];\,FL^p)}<2^{-1+(2N-1)(s_p+2)}\varepsilon.\]
Thus, for any $0\leq T<\delta$
\begin{equation}\label{eq.G-2}
\sum_{|j|<N}2^{j(s_p+2)q}\|te^{\nu t\Delta} u_0\|^q_{L^{\infty}([0,T];\,FL^p)}<\varepsilon^q/2^q.
\end{equation}
Combining \eqref{eq.G-1} and \eqref{eq.G-2} yields
\[\lim_{T\to 0+}\|te^{\nu t\Delta} u_0\|_{\widetilde{L}^{\infty}([0,T];\,F\dot{B}^{s+2}_{p,q})}=0.\]
With this property, we have by Proposition \ref{pro.ana}   that
\[\lim_{T\to 0+}\|tu\|_{\widetilde{L}^{\infty}([0,T];\,F\dot{B}^{s+2}_{p,q})}=0.\]
  Moreover, we readily obtain that
\begin{equation}\label{eq.ana-2}
\sup_{0\leq t< T }t^{\frac \alpha 2-\frac s2+\frac{3}{2\eta}-\frac 3{2p}}\|\nabla^{\alpha}
u(t)\|_{FL^{\eta}}<\infty,\quad \forall (\alpha,\eta)\in \{0,1\}\times [1,p]
\end{equation}
as $s\in (s_p,0)$,
and
\begin{equation}\label{eq.ana-3}
  \sup_{0\leq t< T }t^{\frac \alpha 2-\frac s2+\frac{3}{2\eta}-\frac 3{2p}}\|\nabla^{\alpha}
   u(t)\|_{FL^{\eta}}<\infty,\quad \forall (\alpha,\eta)\in\big\{ \{0,1\}\times [1,p]\big\}\backslash(1,1)
 \end{equation}  as $s=s_p$.

Indeed, by Minkowski's inequality, we have
\[\sup_{0\leq t\leq T}t\|u(t)\|_{F\dot{B}^{s+2}_{p,q}}<\infty\]
Then, by the sharp interpolation inequality, we have that for any $0\leq t\leq T$
\begin{align*}
\|\nabla^{\alpha}u(t)\|_{FL^{\eta}}\leq \|u(t)\|_{F\dot{B}^{\alpha}_{\eta,1}} \leq&\|u(t)\|_{F\dot{B}^{s+2+\frac3p-\frac3\eta}_{\eta,q}}^{\frac\alpha 2+\frac{3}{2\eta}-\frac{3}{2p}-\frac s2}
\|u(t)\|^{1-\frac\alpha 2-\frac{3}{2\eta}+\frac{3}{2p}+\frac s2}_{F\dot{B}^{s+\frac3p-\frac3\eta}_{\eta,q}}\\
\leq&\|u(t)\|_{F\dot{B}^{s+2}_{p,q}}^{\frac{3}{2\eta}-\frac{3}{2p}-\frac s2}
\|u(t)\|^{1-\frac{3}{2\eta}+\frac{3}{2p}+\frac s2}_{F\dot{B}^{s}_{p,q}}
\end{align*}
for any $(\alpha,\eta)\in \{0,1\}\times [1,p]$ if $s\in (s_p,0)$ or
$\big\{(\alpha,\eta)\in \{0,1\}\big\}\times [1,p]\setminus(1,1)$ if $s=s_p$.

 This gives that
\[\sup_{0\leq t\leq T}t^{\frac\alpha 2-\frac s2+\frac{3}{2\eta}-\frac{3}{2p}}\|u(t)\|_{FL^p}\leq \Big(\sup_{0\leq t\leq T}t\|u(t)\|_{F\dot{B}^{s+2}_{p,q}}\Big)^{\frac\alpha 2+\frac{3}{2\eta}-\frac{3}{2p}-\frac s2}
\|u(t)\|^{1-\frac\alpha 2-\frac{3}{2\eta}+\frac{3}{2p}+\frac s2}_{F\dot{B}^{s}_{p,q}}.\]
\end{remark}

\section{Some properties of weak energy solutions to perturbed equations}\label{sec.4}
\setcounter{section}{4}\setcounter{equation}{0}
In this section, we will study some properties of weak energy solutions to the perturbed equations
 \begin{equation}\label{W-2}
\left\{\begin{array}{ll}
\partial_t w-\nu\Delta w+w\cdot\nabla w+v\cdot\nabla w+w\cdot\nabla v+\nabla \bar{P}=0,\\
\Div w=0,\\
w(x,0)=w_0(x)\in L^2(\RR^3).
\end{array}\right.
\end{equation}
with that $v$ is a mild solution to \eqref{NS} established in Theorem \ref{well-sub}, satisfying $$v\in C([0,T];\,F\dot{B}^{s}_{\tilde{p},\tilde{q}}(\RR^3)) \cap \widetilde{L}^r([0,T];\,F\dot{B}^{2+s}_{\tilde{p},\tilde{q}}(\RR^3))\,\,\,\quad\forall r\in [1,\infty]$$
with $\tilde{p}\in (1,3/2)$ and $s\in (s_{\tilde{p}},0)$.
Since  $w_0\in L^2(\RR^3)$ satisfies $\Div w_0=0$, by the Fardo-Galerkin method used in  \cite{Al17}, we can show that problem \eqref{W-2}  admits at most one  weak energy solution $(w,\bar{P})$ satisfies the following conditions:
\begin{enumerate}
  \item [\rm (W1)] \label{W1} $w\in L^{\infty}((0,T );\,L^2(\RR^3))\cap L^2((0,T );\,\dot{H}^1(\RR^3)), $ $\bar{P}\in \left(L^{{3/2}}+L^{2}\right)(\RR^3\times (0,T)).$\medskip

  \item[\rm (W2)] \label{W2}$(w,\,\bar{P})$ satisfies Eq. \eqref{W-2} in the sense of distribution on $\RR^3\times (0,T )$.\medskip

  \item [\rm (W3)]\label{W3}$\displaystyle\lim_{t\to 0+}\|w(t)-w_0\|_{L^2}=0$.\medskip

  \item[\rm (W4)] \label{W4} For any $\varphi\in \mathcal{D}(\RR^3\times (0,T))$, $\varphi\geq 0$, the generalized local energy inequality holds
  \begin{equation}\label{loc.W}
  \begin{split}
   &\int_{\RR^3\times \{t\}}|w|^2\varphi\,\mathrm{d}x+2\nu\int^t_0\int_{\RR^3}|\nabla w|^2\varphi\,\mathrm{d}x\mathrm{d}s\\
  \leq&\int^t_0\int_{\RR^3}|w|^2(\partial_t\varphi+\nu\Delta\varphi)\,\mathrm{d}x\mathrm{d}s+\int^t_0\int_{\RR^3}|w|^2(w+v)\cdot\nabla
   \varphi\,\mathrm{d}x\mathrm{d}s\\
   &+2\int^t_0\int_{\RR^3}(w\cdot\nabla \varphi)(v\cdot w)+(w\cdot\nabla w)\cdot v\varphi\,\mathrm{d}x\mathrm{d}s+2\int^t_0\int_{\RR^3}\bar{P} w\cdot\nabla \varphi\,\mathrm{d}x\mathrm{d}s.\\
        \end{split}
       \end{equation}
       \item[\rm (W5)] \label{W5} $w$ satisfies the generalized global energy inequality on $\RR^3\times (0,T)$:
        \begin{equation}\label{glo.W}
        \int_{\RR^3\times\{t\}}|w|^2\,\mathrm{d}x+2\nu\int^t_0\int_{\RR^3}|\nabla w|^2\,\mathrm{d}x\mathrm{d}s\leq \|w_0\|^2_{L^2}+2\int^t_0(w\cdot\nabla) w\cdot v\,\mathrm{d}x\mathrm{d}s.
        \end{equation}
\end{enumerate}
Following the argument used in \cite{CKN82}, we can show that this weak energy solution is also the suitable weak solution which is defined as follows:
\begin{definition}[Suitable weak solutions]\label{def.suit}
Let $D=\Omega\times (0,T)$ be an open set of $\RR^3\times\RR^+$ and $v\in L^p_tL^q_x(D)$ satisfying $\Div v=0$, ${2\over p}+{3\over q}\leq 1$ and $3\leq q\leq\infty$. Then we call $(w,\bar{P})$ is a suitable weak solution in $D$ to equations:
\begin{equation}\label{eq.W'}
\left\{\begin{array}{ll}
\partial_t w-\nu\Delta w+w\cdot\nabla w+v\cdot\nabla w+w\cdot\nabla v+\nabla \bar{P}=0\\
\Div w=0\\
\end{array}\right.,
\end{equation}
if $(w,\bar{P})$ satisfies
\begin{enumerate}
\item[$(\mathrm{i})$] $w\in L^{\infty}L^2(D)\cap L^2\dot{H}^1(D)$ and $Q\in L^{3/ 2}(D)+L^2(D);$\smallskip

\item[$(\mathrm{ii})$] $w$ and $\bar{P}$ satisfy Eq. \eqref{eq.W'} in the sense of distribution on $D;$\smallskip

\item[$(\mathrm{iii})$] $w$ and $\bar{P}$ satisfy the generalized local energy inequality \eqref{loc.W} on $D$.
\end{enumerate}
\end{definition}

In the following part of Section \ref{sec.4}, we will discuss the partial regularity and stability of singularities of suitable weak solutions to problem \eqref{eq.W'}, and the short-time behaviour of the kinetic energy  and weak-strong uniqueness of the local energy solutions to problem \eqref{eq.W'}.
\subsection{Partial regularity criterion}
In this subsection, inspired by  CKN's theorem established in  \cite{CKN82}, we will shown the following $\varepsilon$-regularity criteria of  suitable weak solutions to the  perturbed equations \eqref{eq.W'}, by  De Giorgi iteration and dimensional analysis.
\begin{theorem}[$\varepsilon$-regularity criterion]\label{regularity}
 Let  $v\in L^p_tL^q_x(Q_R(x_0,t_0))$ satisfies $\Div v=0$ with  ${2\over p}+{3\over q}<1$, $q> 3$. Assume $(w,\bar{P})$ is a suitable weak solution to Eq. \eqref{eq.W'} in $Q_R(x_0,t_0)$. Then there exists an absolute constant $\epsilon_0=\epsilon_0(p,q,\|v\|_{L^p_tL_x^q})>0$ with the following property: if
\[R^{-2}\int_{Q_R(x_0,t_0)}\left(|w|^3+|\bar{P}|^{3\over 2}\right)\,\mathrm{d}x\mathrm{d}\tau< \epsilon_0,\]
then there exists a constant $C^*>0$ such that
$$\|w\|_{L^{\infty}(Q_{R/2}(x_0,t_0))}\leq C^*R^{-1}.$$
\end{theorem}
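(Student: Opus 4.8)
The plan is to follow the classical Caffarelli--Kohn--Nirenberg strategy but to carry it out via a De Giorgi iteration adapted to the perturbed system \eqref{eq.W'}, treating the drift term $v\cdot\nabla w$ and the shear term $w\cdot\nabla v$ as lower-order perturbations controlled by the hypothesis $v\in L^p_tL^q_x$ with $\frac2p+\frac3q<1$. After translating and rescaling so that $(x_0,t_0)=(0,1)$ and $R=1$ (the scaling invariance \eqref{scall} of \eqref{eq.W'} makes the normalized quantity $R^{-2}\int_{Q_R}(|w|^3+|\bar P|^{3/2})$ scale-invariant, so no loss of generality), the first step is to establish the key decay estimate: there is $\epsilon_0$ and $\theta\in(0,\frac12)$ such that
\begin{equation*}
\frac1{r^2}\int_{Q_r}\bigl(|w|^3+|\bar P|^{3/2}\bigr)\,\mathrm{d}x\mathrm{d}\tau<\epsilon_0
\quad\Longrightarrow\quad
\frac1{(\theta r)^2}\int_{Q_{\theta r}}\bigl(|w|^3+|\bar P|^{3/2}\bigr)\,\mathrm{d}x\mathrm{d}\tau<\epsilon_0 ,
\end{equation*}
so that the smallness propagates to all scales $\theta^k r$. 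The pressure is split $\bar P=\bar P_1+\bar P_2$ using the equation: $\bar P_1$ is the local Calder\'on--Zygmund part solving $-\Delta \bar P_1=\partial_i\partial_j\bigl((w_i+v_i)(w_j+w_j)\bigr)\chi$ (schematically, the nonlinear plus interaction terms times a cutoff), which is estimated in $L^{3/2}$ by $\|w\|_{L^3}^2+\|w\|_{L^3}\|v\|_{L^p_tL^q_x}$ on a slightly larger cube via Hölder, and $\bar P_2$ is harmonic in space, hence smooth with interior bounds controlled by the $L^{3/2}$ mass of $\bar P$ on the larger cube; this is where the exponent condition $\frac2p+\frac3q<1$ (strict) buys the extra integrability needed to close the iteration rather than merely $\le 1$.

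The second step is the energy/De Giorgi part. Using the generalized local energy inequality \eqref{loc.W} from Definition \ref{def.suit}(iii) with a suitable cutoff $\varphi$ supported in $Q_r$, one bounds $\sup_t\int_{B}|w|^2\varphi + \nu\int\!\!\int|\nabla w|^2\varphi$ by the right-hand side terms; the new terms involving $v$, namely $\int\!\!\int|w|^2 v\cdot\nabla\varphi$, $\int\!\!\int (w\cdot\nabla\varphi)(v\cdot w)$ and $\int\!\!\int(w\cdot\nabla w)\cdot v\,\varphi$, are estimated by Hölder in space-time using $v\in L^p_tL^q_x$ together with the Sobolev embedding $\dot H^1\hookrightarrow L^6$ and interpolation, absorbing the $\|\nabla w\|_{L^2}^2$ factor coming from $\int\!\!\int(w\cdot\nabla w)\cdot v\,\varphi$ into the left side thanks to the smallness of $\|v\|_{L^p_tL^q_x(Q_r)}$ on small cubes (which $\to 0$ as $r\to 0$ by absolute continuity of the integral — this is precisely why $\epsilon_0$ is allowed to depend on $\|v\|_{L^p_tL^q_x}$). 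This converts the hypothesis ``$\frac1{r^2}\int_{Q_r}(|w|^3+|\bar P|^{3/2})$ small'' into control of the scale-invariant energy quantities $A(r)=\sup_t r^{-1}\int_{B_r}|w|^2$ and $E(r)=r^{-1}\int\!\!\int_{Q_r}|\nabla w|^2$. Then a Sobolev--interpolation inequality (Gagliardo--Nirenberg on the cube, bounding $\|w\|_{L^3(Q_r)}$ in terms of $A(r)$, $E(r)$ and a mean-value subtraction) together with the pressure estimates feeds back into a decay inequality of the form $C(\theta r)\le C\,\theta^{\alpha} C(r) + C\,\theta^{-\beta}C(r)^{3/2}$ for the combined quantity $C(r)=\frac1{r^2}\int_{Q_r}(|w|^3+|\bar P|^{3/2})$; choosing $\theta$ small and then $\epsilon_0$ small makes the map a contraction, giving the iteration. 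Once $C(\theta^k)\to 0$ with geometric rate, a standard consequence (Morrey-type or a second short De Giorgi level-set iteration on $|w|$ using the local energy inequality with truncations $(|w|-\lambda)_+$) upgrades this to the pointwise bound $\|w\|_{L^\infty(Q_{1/2})}\le C^*$, and unscaling restores the factor $R^{-1}$.

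I expect the main obstacle to be the treatment of the perturbation terms in the local energy inequality, specifically the term $\int\!\!\int_{Q_r}(w\cdot\nabla w)\cdot v\,\varphi\,\mathrm{d}x\mathrm{d}s$: it carries a full derivative on $w$ and so competes directly with the good term $\nu\int\!\!\int|\nabla w|^2\varphi$. The delicate point is to estimate it as $\le \frac\nu2\int\!\!\int|\nabla w|^2\varphi + C\|v\|_{L^p_tL^q_x(Q_r)}^{\gamma}(\text{lower order})$ with the coefficient of $\|\nabla w\|_{L^2}^2$ genuinely small, which forces one to use Hölder in time with exponent $p$ and in space with exponent $q$, then Sobolev in space, and crucially to exploit that the condition $\frac2p+\frac3q<1$ is strict so that there is room to interpolate and to let the localized norm of $v$ be small — an equality $\frac2p+\frac3q=1$ would only give boundedness, not smallness, and the contraction would fail. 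Keeping careful track of the cutoff derivatives (which produce negative powers of $r$) and matching them against the scale-invariant bookkeeping is the other place where care is required; everything else is routine once the decay inequality is in hand.
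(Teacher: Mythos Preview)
Your outline is sound and would yield the theorem, but it takes a genuinely different route from the paper's proof.  The paper does \emph{not} iterate the scale-invariant CKN quantity $C(r)=r^{-2}\int_{Q_r}(|w|^3+|\bar P|^{3/2})$ to zero and then upgrade to $L^\infty$ as a second step.  Instead, after a single rescaling to $Q_1$, it runs a Vasseur-style De~Giorgi level-set iteration directly: defining truncations $w_k=[|w|-(1-2^{-k})]_+$, nested cubes $Q_{r_k}$ with $r_k=\tfrac12+2^{-k-2}$, and energies $W_k=\mathrm{ess\,sup}_t\int_{B_{r_k}}w_k^2+\int_{Q_{r_k}}d_k^2$, it proves a nonlinear recursion $W_k\le C_2^k W_{k-1}^\beta$ with $\beta>1$, which forces $W_k\to 0$ and hence $|w|\le 1$ on $Q_{1/2}$.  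The bulk of the paper's work is an elaborate pressure decomposition at each level (splitting $\bar P$ into a local Calder\'on--Zygmund piece $\bar P_{k1}$, an interaction piece $\bar P_{k2}$, and a far-field harmonic piece $\bar P_{k3}$, with further subdecompositions of $\bar P_{k1}$ and $\bar P_{k2}$ according to the splitting $w=w(1-w_k/|w|)+ww_k/|w|$).  So the De~Giorgi truncation that you list as one option for the final $L^\infty$ upgrade is, in the paper, the entire argument.  Your CKN/Lin-style approach is cleaner conceptually (the pressure is split only once) but is two-stage; the paper's single-iteration approach reaches $L^\infty$ directly at the price of heavier bookkeeping.

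One point to correct: you attribute the smallness of $\|v\|_{L^p_tL^q_x(Q_r)}$ to ``absolute continuity of the integral as $r\to 0$'', which fails if $p=\infty$ or $q=\infty$ (both allowed by the hypotheses).  The paper's mechanism is scaling: under $v_{R_0}(x,t)=R_0\,v(x_1+R_0x,\,t_1+R_0^2t)$ one has $\|v_{R_0}\|_{L^p_tL^q_x(Q_1)}=R_0^{\,1-2/p-3/q}\|v\|_{L^p_tL^q_x(Q_{R_0})}$, and the strict subcriticality $1-2/p-3/q>0$ makes this small for small $R_0$ uniformly in $p,q$.  This is the precise reason $\epsilon_0$ depends on $\|v\|_{L^p_tL^q_x}$: one first fixes $R_0$ small in terms of $\|v\|$, and only then sets $\epsilon_0\sim R_0^{2}\epsilon_1$.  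Replacing your absolute-continuity remark by this scaling argument removes the gap.
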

\begin{remark}
Let us point out that in  \cite{JS14},  Jia and \v{S}ver\'{a}k  proved the same  result under the condition  $v\in L^{m}(Q_R(x_0,t_0))$ with $m>5$. Here, we  consider the general case that $v\in L^p_tL^q_x(Q_R(x_0,t_0))$ and   give a new proof is based on  De Giorgi  iteration and  dimensional analysis.
\end{remark}
Before proving Theorem \ref{regularity}, we first give two  useful lemmas.
\begin{lemma}[\cite{Ev86}]\label{lem.uni}
Let $f$ be a nonnegative nondecreasing bounded function defined on $[0,1]$ with the following property:
for any $0\leq {r}\leq s<t<R\leq 1$ and some constants $\theta\in(0,1)$, $M>0$, $\beta>0$, we have
\[f(s)\leq \theta f(t)+{M\over (t-s)^{\beta}}.\]
Then,
\[\sup_{s\in [0,r]}f(s)\leq C(\theta,\beta){M\over (R-r)^{\beta}}.\]
\end{lemma}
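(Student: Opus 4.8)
The plan is to reduce the statement to a pointwise bound and then run a standard hole‑filling iteration. Since $f$ is nondecreasing on $[0,1]$, for every $r$ we have $\sup_{s\in[0,r]}f(s)=f(r)$, so it suffices to prove $f(r)\le C(\theta,\beta)\,M(R-r)^{-\beta}$ for each pair $0\le r<R\le1$. Fix such $r,R$ and a parameter $\tau\in(0,1)$ to be chosen, and introduce the increasing sequence
\[t_i=r+(1-\tau^{i})(R-r),\qquad i=0,1,2,\dots,\]
so that $t_0=r$, $t_i\to R$, $r\le t_i<t_{i+1}<R$ for all $i$, and $t_{i+1}-t_i=(1-\tau)\tau^{i}(R-r)$. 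Applying the hypothesis with $s=t_i$ and $t=t_{i+1}$ gives
\[f(t_i)\le\theta\,f(t_{i+1})+\frac{M}{(1-\tau)^{\beta}(R-r)^{\beta}}\,\bigl(\tau^{-\beta}\bigr)^{i},\]
and iterating this relation $k$ times from $i=0$ yields
\[f(r)=f(t_0)\le\theta^{k}f(t_k)+\frac{M}{(1-\tau)^{\beta}(R-r)^{\beta}}\sum_{i=0}^{k-1}\bigl(\theta\,\tau^{-\beta}\bigr)^{i}.\]

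The only genuine choice is to pick $\tau$ so that the geometric series converges, that is, $\theta\,\tau^{-\beta}<1$. Since $0<\theta<1$ and $\beta>0$ we have $\theta^{1/\beta}<\theta^{1/(2\beta)}<1$, so taking $\tau=\theta^{1/(2\beta)}$ gives $\theta\,\tau^{-\beta}=\theta^{1/2}<1$ and $\sum_{i\ge0}(\theta\tau^{-\beta})^{i}=(1-\theta^{1/2})^{-1}$. Because $f$ is bounded, $0\le\theta^{k}f(t_k)\le\theta^{k}\sup_{[0,1]}f\to0$ as $k\to\infty$; letting $k\to\infty$ in the last displayed inequality therefore gives
\[f(r)\le\frac{1}{(1-\theta^{1/(2\beta)})^{\beta}\,(1-\theta^{1/2})}\cdot\frac{M}{(R-r)^{\beta}},\]
which is the assertion with $C(\theta,\beta)=\bigl(1-\theta^{1/(2\beta)}\bigr)^{-\beta}\bigl(1-\theta^{1/2}\bigr)^{-1}$, a constant depending only on $\theta$ and $\beta$.

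There is no serious obstacle here; the argument is purely elementary. The one point that requires care is balancing the geometric decay $\theta^{i}$ coming from iterating the hole‑filling inequality against the geometric growth $\tau^{-i\beta}$ produced by the shrinking gaps $t_{i+1}-t_i$: this forces the constraint $\tau>\theta^{1/\beta}$, and any admissible $\tau$ then yields a constant of the claimed form. One should also check that the intermediate radii $t_i$ remain strictly inside $(r,R)$, so that the hypothesis $f(s)\le\theta f(t)+M(t-s)^{-\beta}$ is applicable at each step, and that boundedness of $f$ is exactly what is needed to discard the remainder term $\theta^{k}f(t_k)$ in the limit. This lemma will then be used to absorb a bad term into a smaller copy of the quantity being estimated in the proof of the $\varepsilon$‑regularity criterion, Theorem~\ref{regularity}.
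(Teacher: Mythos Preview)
Your argument is correct and is precisely the classical hole-filling iteration that proves this lemma. The paper does not supply its own proof: the lemma is quoted from \cite{Ev86} and stated without argument, so there is nothing to compare against beyond noting that your proof is the standard one found in the cited reference.
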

\begin{lemma}\label{est.inter}
There exists a constant $C>0$ such that for any $u\in L^{\infty}_tL^2_x(Q_r)$ and $\nabla u\in L^2_tL^2_x(Q_r)$,
\[\|u\|_{L^m_tL^n_x(Q_r)}\leq C(\|u\|_{L^{\infty}_tL^2_x(Q_r)}+\|\nabla u\|_{L^2(Q_r)})\]
for any $2/m+3/n=3/2$ with $2\leq n\leq 6$.
\end{lemma}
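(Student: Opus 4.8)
The plan is to reduce this parabolic estimate to a time-slicewise Gagliardo--Nirenberg inequality in the spatial variable and then integrate in $t$, using the hypothesis $\frac2m+\frac3n=\frac32$ to make the exponents fit. Throughout write $Q_r=Q_r(x_0,t_0)=B_r(x_0)\times(t_0-r^2,t_0)$ and set $\theta=\frac32-\frac3n$; then $\theta\in[0,1]$ precisely when $2\le n\le 6$, and the constraint $\frac2m+\frac3n=\frac32$ is equivalent to $\theta=\frac2m$, i.e. $\theta m=2$.

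The first step is to record the Gagliardo--Nirenberg inequality on the ball: for a.e.\ $t\in(t_0-r^2,t_0)$,
\[
\|u(t)\|_{L^n_x(B_r)}\le C\|\nabla u(t)\|_{L^2_x(B_r)}^{\theta}\|u(t)\|_{L^2_x(B_r)}^{1-\theta}+C\,r^{-\theta}\|u(t)\|_{L^2_x(B_r)},
\]
with $C$ an absolute constant. The $r$-weight $r^{-\theta}$ on the lower-order term is dictated by the scaling $x\mapsto rx$, which reduces the inequality to the case $r=1$; for $r=1$ the endpoint $n=2$ is trivial, $n=6$ is the Sobolev embedding $H^1(B_1)\hookrightarrow L^6(B_1)$, and the intermediate exponents follow by Hölder interpolation between these two. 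Raising this inequality to the power $m$, using $(a+b)^m\lesssim a^m+b^m$ and $\theta m=2$, gives for a.e.\ $t$
\[
\|u(t)\|_{L^n_x(B_r)}^m\le C\|\nabla u(t)\|_{L^2_x(B_r)}^{2}\|u(t)\|_{L^2_x(B_r)}^{(1-\theta)m}+C\,r^{-2}\|u(t)\|_{L^2_x(B_r)}^{m}.
\]

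The second step is to integrate over $t\in(t_0-r^2,t_0)$, whose length is $r^2$. Bounding $\|u(t)\|_{L^2_x(B_r)}$ by $\|u\|_{L^\infty_tL^2_x(Q_r)}$ in the lower-order factors, the two powers of $r$ cancel and one obtains
\[
\int_{t_0-r^2}^{t_0}\|u(t)\|_{L^n_x(B_r)}^m\,\mathrm{d}t\le C\|u\|_{L^\infty_tL^2_x(Q_r)}^{(1-\theta)m}\|\nabla u\|_{L^2(Q_r)}^{2}+C\|u\|_{L^\infty_tL^2_x(Q_r)}^{m}.
\]
Taking the $m$-th root, recalling $\frac2m=\theta$, and applying Young's inequality in the form $a^{1-\theta}b^{\theta}\le (1-\theta)a+\theta b\le a+b$ for $a,b\ge0$ yields
\[
\|u\|_{L^m_tL^n_x(Q_r)}\le C\|u\|_{L^\infty_tL^2_x(Q_r)}^{1-\theta}\|\nabla u\|_{L^2(Q_r)}^{\theta}+C\|u\|_{L^\infty_tL^2_x(Q_r)}\le C\big(\|u\|_{L^\infty_tL^2_x(Q_r)}+\|\nabla u\|_{L^2(Q_r)}\big),
\]
which is the asserted estimate.

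The only point that requires care is the scaling bookkeeping in the spatial Gagliardo--Nirenberg inequality: one must carry the precise weight $r^{-\theta}$ on the lower-order term, so that after raising to the power $m=2/\theta$ and integrating over a time interval of length $r^2$ the factor $r^{-\theta m}\cdot r^{2}=1$ drops out and the resulting constant $C$ is genuinely independent of $r$. Everything else is a direct application of Hölder's and Young's inequalities; the statement is purely functional-analytic and uses no PDE structure, so no compactness argument is needed.
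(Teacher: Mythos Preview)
Your proof is correct and follows essentially the same approach as the paper: both rest on the Gagliardo--Nirenberg/Sobolev inequality on $B_r$ with the correct $r$-weight, followed by integration in time and interpolation. The only difference is the order of operations---the paper first proves the endpoint $\|u\|_{L^2_tL^6_x(Q_r)}\lesssim \|\nabla u\|_{L^2(Q_r)}+\|u\|_{L^\infty_tL^2_x(Q_r)}$ and then interpolates the mixed norms $(L^\infty_tL^2_x,L^2_tL^6_x)$, whereas you apply the interpolated Gagliardo--Nirenberg inequality at each fixed $n$ first and then integrate in $t$; your careful tracking of the cancellation $r^{-\theta m}\cdot r^2=1$ makes the $r$-independence of $C$ more explicit than the paper's writeup.
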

\begin{proof}
By Gagliardo-Nirenberg's inequality, we readily have
\[\|u(t)\|_{L^6_x(B_r)}\leq C\|\nabla u(t)\|_{L^2_x(B_r)}+Cr^{-1}\|u(t)\|_{L^2_x(B_r)}.\]
Moreover, we have by H\"{o}lder's inequality that
\[\|u\|_{L^2_tL^6_x(B_r)}\leq C(\|\nabla u\|_{L^2(Q_r)}+\|u\|_{L^{\infty}_tL^2_x(B_r)}).\]
Hence, the  interpolation between $L^{\infty}_tL^2_x(Q_r)$ and $L^2_tL^6_x(Q_r)$ entails
\[\|u\|_{L^m_tL^n_x(Q_r)}\leq C\big(\|u\|_{L^{\infty}_tL^2_x(Q_r)}+\|\nabla u\|_{L^2(Q_r)}\big),\]
where ${2/ m}+{3/n}=2/3$ and $2\leq n\leq 6$.
\end{proof}
Now, we come back to the proof of Theorem \ref{regularity}.
Letting $R_0<{R\over 2}$, we find that $$Q_{R_0}(x_1,t_1)\subset Q_R(x_0,t_0) \quad \text{for each}\quad(x_1,t_1)\in \overline {Q_{\frac R 2}(x_0,t_0)}.$$
Denoting
\[w_{R_0}(x,t)=R_0 w\left(x_1+R_0 x,t_1+R_0^2 t\right),\quad\bar{P}_{R_0}(x,t)=R_0^2 \bar{P}\left(x_1+R_0 x,t_1+R_0^2 t\right),\]
\[v_{R_0}(x,t)=R_0 v\left(x_1+R_0 x,t_1+R_0^2 t\right),\]
we find that the couple $(w_{R_0},\bar{P}_{R_0})$ is also a suitable weak solution to equations \eqref{eq.W'} in $Q_{1}$ with $v_{R_0}$ instead of $v$. And we have
\[\int_{Q_{1}}\left(|w_{R_0}|^3+|\bar{P}_{R_0}|^{3/2}\right)\,\mathrm{d}x\mathrm{d}\tau=R_0^{-2}\int_{Q_{R_0}(x_1,t_1)}\left(|w|^3+|\bar{P}|^{3/2}\right)\,\mathrm{d}x\mathrm{d}\tau
\leq R^{-2}_0\epsilon_0\]
and
\[\|v_{R_0}\|_{L^p_tL^q_x(Q_1)}=R_0^{1-2/p-3/q}\|v\|_{L^p_tL^q_x(Q_{R_0}(x_1,t_1))}\leq R_0^{1-2/p-3/q}\|v\|_{L^p_tL^q_x(Q_{R}(x_0,t_0))}.\]
Set $\epsilon_1=R_0^{1-{2/p}-{3/q}}\|v\|_{L^p_tL^q_x(Q_{R}(x_0,t_0))}$. Since $2/p+3/q<1$, we can choose $R_0$ small enough such that $\epsilon_1\ll{1/2}$. Fixed $R_0$, then choose suitable small $\epsilon_0$ such that $R^{-2}_0\epsilon_0<\epsilon_1$. In the following proof, without ambiguity, we still use $(w,\bar{P},v)$ to denote $(w_{R_0},\bar{P}_{R_0},v_{R_0})$. Therefore, we have
\begin{equation*}\label{con.sma}
\int_{Q_{1}}\left(|w|^3+|\bar{P}|^{3/2}\right)\,\mathrm{d}x\mathrm{d}\tau+\|v\|_{L^p_tL^q_x(Q_1)}< 2\epsilon_1.
\end{equation*}
This estimate allows us to  claim that there exists a positive constant $C^*$ such that
\begin{equation}\label{w-bound}
\|w\|_{L^{\infty}(Q_{\frac1 2})}\leq C^*.
\end{equation}
By taking the inverse transform of scaling,  we immediately obtain the theorem.

 We turn to prove claim \eqref{w-bound}.   For any $k\in \{0,1,2,\cdots\}$, we introduce a new function
\[w_k=\left[|w|-\left(1-2^{-k}\right)\right]_+.\]
Since $w_k^2$ equals to $0$ for $|w|<1-2^{-k}$ and is of the order of $|w|^2$ for $|w|\gg1-2^{-k}$, $w^2_k$ can be seen as a level set of energy.

Let us denote
\[r_k=2^{-1}+2^{-k-2},\quad r_{k-1/3}=2^{-1}+2^{-k-2+1/3},\quad r_{k-2/3}=2^{-1}+2^{-k-2+2/3}.\]
Then, we define
\[W_k\triangleq\mbox{ess}\sup_{-r^2_k<t\leq 0}\int_{B_{r_k}}|w_k|^2\,\mathrm{d}x+\int_{Q_{r_k}}|d_k|^2\,\mathrm{d}x\mathrm{d}\tau\]
where
\[d^2_k\triangleq 2\nu{(1-2^{-k})I_{\{|w|\geq 1-2^{-k}\}}\over |w|}|\nabla|w||^2+2\nu{w_k\over |w|}|\nabla w|^2.\]

For such $w_k$, $d_k$ and $W_k$, we have the following well-known properties which will be useful in the following proof.
\begin{lemma}[\cite{Va07}]\label{lem.link}
In the light of the definition of $w_k$ and $d_k$, the function $w$ can be decomposed as follows:
\[w=w\bigg(1-{w_k\over |w|}\bigg)+w{w_k\over |w|}\]
satisfying
\[\bigg|w\bigg(1-{w_k\over |w|}\bigg)\bigg|\leq 1-2^{-k}.\]
Moreover, we can control the following gradients by $d_k$:
\[{w_k\over |w|}|\nabla w|\leq \nu^{-\frac12}d_k,\quad I_{\{|w|\geq 1-2^{-k}\}}\big|\nabla |w|\big|\leq \nu^{-\frac12}d_k,\]
\[|\nabla w_k|\leq \nu^{-\frac12}d_k\quad\text{and}\quad \Big|\nabla {ww_k\over |w|}\Big|\leq 3\nu^{-\frac12}d_k.\]
\end{lemma}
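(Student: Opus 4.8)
The plan is to verify all the assertions \emph{pointwise} (almost everywhere on $Q_{r_k}$), splitting the domain into the two regions $\{|w|<1-2^{-k}\}$, on which $w_k\equiv 0$ and every claim is trivial, and $\{|w|\ge 1-2^{-k}\}$, on which $w_k=|w|-(1-2^{-k})$ and $0\le w_k/|w|<1$. The two elementary facts I would invoke repeatedly are Kato's inequality $|\nabla|w||\le|\nabla w|$ a.e.\ (coming from $\nabla|w|=|w|^{-1}(w\cdot\nabla)w$ on $\{w\neq 0\}$, together with $\nabla w=\nabla|w|=0$ a.e.\ on $\{w=0\}$), and the chain rule for the truncation, $\nabla w_k=I_{\{|w|>1-2^{-k}\}}\nabla|w|$ a.e. The decomposition $w=w(1-w_k/|w|)+ww_k/|w|$ is a tautology, and its bound is immediate: on $\{|w|<1-2^{-k}\}$ the first summand is $w$ with $|w|<1-2^{-k}$, while on $\{|w|\ge1-2^{-k}\}$ one has $1-w_k/|w|=(1-2^{-k})/|w|$, so $|w(1-w_k/|w|)|=|w|\,(1-2^{-k})/|w|=1-2^{-k}$.

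For the gradient bounds I would start from the two obvious consequences of the definition of $d_k^2$, namely $d_k^2\ge 2\nu\,\frac{w_k}{|w|}|\nabla w|^2$ and $d_k^2\ge 2\nu\,\frac{1-2^{-k}}{|w|}I_{\{|w|\ge1-2^{-k}\}}|\nabla|w||^2$. Since $0\le w_k/|w|\le1$, the first one gives $\big(\frac{w_k}{|w|}\big)^2|\nabla w|^2\le\frac{w_k}{|w|}|\nabla w|^2\le\frac1{2\nu}d_k^2$, i.e. $\frac{w_k}{|w|}|\nabla w|\le\nu^{-1/2}d_k$. For $I_{\{|w|\ge1-2^{-k}\}}|\nabla|w||$ I split $\{|w|\ge1-2^{-k}\}$ once more: on $\{1-2^{-k}\le|w|\le2(1-2^{-k})\}$ one has $(1-2^{-k})/|w|\ge\frac12$, so the second inequality above yields $|\nabla|w||^2\le\nu^{-1}d_k^2$; on $\{|w|>2(1-2^{-k})\}$ one has $w_k=|w|-(1-2^{-k})>|w|/2$, hence $w_k/|w|>\frac12$, and the first inequality together with Kato gives $|\nabla|w||^2\le|\nabla w|^2\le\nu^{-1}d_k^2$. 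Thus $I_{\{|w|\ge1-2^{-k}\}}|\nabla|w||\le\nu^{-1/2}d_k$, and the chain rule immediately yields $|\nabla w_k|=I_{\{|w|>1-2^{-k}\}}|\nabla|w||\le\nu^{-1/2}d_k$.

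For the final estimate I would differentiate componentwise, $\partial_j\!\big(\tfrac{w_iw_k}{|w|}\big)=\tfrac{w_k}{|w|}\partial_jw_i+\tfrac{w_i}{|w|}\partial_jw_k-\tfrac{w_iw_k}{|w|^2}\partial_j|w|$, and apply the triangle inequality for the Frobenius (Hilbert--Schmidt) norm of the matrix $(\partial_jg_i)$, where $g=ww_k/|w|$. Using $\sum_iw_i^2=|w|^2$, the three pieces have Frobenius norms $\tfrac{w_k}{|w|}|\nabla w|$, $|\nabla w_k|$ and $\tfrac{w_k}{|w|}|\nabla|w||$ respectively; the first two are $\le\nu^{-1/2}d_k$ by the bounds just proved, and the third is $\le\nu^{-1/2}d_k$ because $w_k/|w|\le1$ while on $\mathrm{supp}\,w_k$ one has $|\nabla|w||=I_{\{|w|\ge1-2^{-k}\}}|\nabla|w||\le\nu^{-1/2}d_k$. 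Summing gives $\big|\nabla\tfrac{ww_k}{|w|}\big|\le3\nu^{-1/2}d_k$.

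The only genuinely delicate point --- the one step that is not pure bookkeeping --- is the case split $\{|w|\le2(1-2^{-k})\}$ versus $\{|w|>2(1-2^{-k})\}$ used to absorb $I_{\{|w|\ge1-2^{-k}\}}|\nabla|w||$ into $d_k$: neither term of $d_k^2$ alone controls it, and one must observe that whichever of $(1-2^{-k})/|w|$ and $w_k/|w|$ is large is precisely the one that helps. Beyond that, the care required is only in reading every identity a.e.: the quotients $w/|w|$, $w_k/|w|$ and the gradient $\nabla|w|$ degenerate near $\{w=0\}$ and near the level sets $\{|w|=1-2^{-k}\}$, but the Stampacchia-type facts that $\nabla w$ and $\nabla|w|$ vanish a.e.\ on $\{w=0\}$ and on $\{|w|=\mathrm{const}\}$ make all the manipulations (in particular the chain rule for $w_k$) legitimate in $W^{1,2}_{\mathrm{loc}}$.
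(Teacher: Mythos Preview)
Your proof is correct. The paper itself does not prove this lemma at all: it is simply quoted from \cite{Va07} and used as a black box, so there is no ``paper's own proof'' to compare against. Your pointwise verification is exactly the standard argument.

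One small simplification you may want to know: the case split $\{|w|\le 2(1-2^{-k})\}$ versus $\{|w|>2(1-2^{-k})\}$ in your treatment of $I_{\{|w|\ge 1-2^{-k}\}}|\nabla|w||$ can be avoided. On $\{|w|\ge 1-2^{-k}\}$ one has the exact identity
\[
\frac{1-2^{-k}}{|w|}+\frac{w_k}{|w|}=\frac{(1-2^{-k})+w_k}{|w|}=1,
\]
so summing the two terms of $d_k^2$ and using Kato's inequality $|\nabla|w||\le|\nabla w|$ gives directly $d_k^2\ge 2\nu|\nabla|w||^2$ on that set. This yields the slightly sharper constant $(2\nu)^{-1/2}$ and dispenses with the dichotomy. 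Either way the conclusion is the same, and your argument is fine as written.
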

\begin{lemma}\label{lem.est}
For any $2\leq m\leq \infty$, $2\leq n\leq 6$ satisfying ${2\over m}+{3\over n}={3\over 2}$, there exists a constant $C(\nu)>0$ such that, for any $k\geq 0$,
\[\|w_k\|_{L^m_tL^n_x(Q_{r_k})}\leq C(\nu) W^{{1/ 2}}_{k}.\]
\end{lemma}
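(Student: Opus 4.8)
The plan is to obtain the estimate by applying the parabolic interpolation inequality of Lemma~\ref{est.inter} to the truncated function $u=w_k$ on the cylinder $Q_{r_k}$, so that the whole proof reduces to checking that $w_k$ has the regularity required by that lemma and that the two norms on its right-hand side are both dominated by $W_k^{1/2}$. First I would note that, since $|w_k|\le |w|$ and $w\in L^\infty_tL^2_x(Q_1)\cap L^2_t\dot H^1_x(Q_1)$, the function $w_k$ belongs to $L^\infty_tL^2_x(Q_{r_k})$, while by Lemma~\ref{lem.link} one has the pointwise bound $|\nabla w_k|\le \nu^{-1/2}d_k$ with $d_k\in L^2(Q_{r_k})$ (indeed $d_k^2$ is, up to the factor $2\nu$ and using $|\nabla|w||\le|\nabla w|$ and $0\le w_k/|w|\le 1$, controlled by $|\nabla w|^2$, so in particular $W_k<\infty$). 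Consequently, straight from the definition of $W_k$,
\[
\|w_k\|_{L^\infty_tL^2_x(Q_{r_k})}\le W_k^{1/2}\qquad\text{and}\qquad
\|\nabla w_k\|_{L^2(Q_{r_k})}\le \nu^{-1/2}\|d_k\|_{L^2(Q_{r_k})}\le \nu^{-1/2}W_k^{1/2}.
\]

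The one point that needs a little care is that the constant in Lemma~\ref{est.inter}, which arises from the Gagliardo--Nirenberg inequality together with the factor $r^{-1}$ appearing in it, must be chosen uniformly in $k$. This is guaranteed because $r_k=2^{-1}+2^{-k-2}\in(1/2,\,3/4]$, so all the radii $r_k$ stay bounded above and away from zero by absolute constants, and the corresponding interpolation constant may therefore be taken independent of $k$. Applying Lemma~\ref{est.inter} with $u=w_k$, $r=r_k$ and the admissible pair $(m,n)$ satisfying $2/m+3/n=3/2$, $2\le n\le 6$, we get
\[
\|w_k\|_{L^m_tL^n_x(Q_{r_k})}\le C\big(\|w_k\|_{L^\infty_tL^2_x(Q_{r_k})}+\|\nabla w_k\|_{L^2(Q_{r_k})}\big)\le C\bigl(1+\nu^{-1/2}\bigr)W_k^{1/2},
\]
which is exactly the asserted bound with $C(\nu)=C(1+\nu^{-1/2})$; the endpoint cases $(m,n)=(\infty,2)$ and $(m,n)=(2,6)$ are already contained in the two displayed estimates and in the Gagliardo--Nirenberg step used inside Lemma~\ref{est.inter}.

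Thus I do not expect a serious obstacle here: the lemma is essentially a clean corollary of Lemma~\ref{lem.link} (which converts $\nabla w_k$ into $d_k$) and of Lemma~\ref{est.inter}. The only thing to watch, and the step I would flag as the main (if mild) point, is the $k$-uniformity of the interpolation constant, which is handled by the elementary observation $r_k\in[1/2,3/4]$; everything else is direct substitution.
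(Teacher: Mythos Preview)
Your argument is correct and is essentially identical to the paper's proof: apply the interpolation inequality of Lemma~\ref{est.inter} to $w_k$ on $Q_{r_k}$, then use $|\nabla w_k|\le \nu^{-1/2}d_k$ from Lemma~\ref{lem.link} and the definition of $W_k$. The paper's proof is a two-line display; your additional remark that the interpolation constant is uniform in $k$ because $r_k\in(1/2,3/4]$ is a useful clarification that the paper leaves implicit (note also that the paper's citation of Lemma~\ref{lem.uni} in this proof appears to be a typo for Lemma~\ref{est.inter}).
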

\begin{proof}
According to Lemma \ref{lem.uni} and Lemma \ref{lem.link} we have
\begin{align*}
\|w_k\|_{L^m_tL^n_x(Q_{r_k})}\lesssim& \|w_k\|_{L^{\infty}_tL^2(Q_{r_k})}+\|\nabla w_k\|_{L^2(Q_{r_k})}\\
\lesssim_{\nu}&\|w_k\|_{L^{\infty}_tL^2_x(Q_{r_k})}+\|d_k\|_{L^2(Q_{r_k})}\lesssim_{\nu} W^{1/2}_k.
\end{align*}
This implies the desired estimate.
\end{proof}
\begin{lemma}\label{lem.w_k}
In the light of the definition of $w_k$ and $W_k$, there exists a constant $C(\nu)>0$ such that, for any $k\geq 1$, $q>1$ and ${2\over m}+{3\over n}=1$ with $2\leq m\leq \infty$, we have
\[\|I_{w_k>0}\|_{L^q(Q_{k-1})}\leq C(\nu)2^{{10k/3q}}W_{k-1}^{{5/3q}},\quad \|I_{w_k>0}\|_{L^{\infty}_tL^q_x(Q_{r_{k-1}})}\leq C(\nu)2^{{2k/q}}W_{k-1}^{{1/q}}\]
and
\[\|I_{w_k>0}\|_{L^{m}_tL^n_x(Q_{r_{k-1}})}\leq C(\nu)2^{{k}}W_{k-1}^{{1/2}}.\]
\end{lemma}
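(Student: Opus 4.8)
The statement to prove is Lemma~4.?? bounding the measures of the super-level sets $\{w_k>0\}$ in various mixed norms. The plan is to exploit the basic fact that on the set $\{w_k>0\}$ one has $|w|\ge 1-2^{-k}\ge \tfrac12$ for $k\ge1$, hence $w_{k-1}=[|w|-(1-2^{-(k-1)})]_+\ge 2^{-k}$ there. Therefore the indicator is dominated by a power of $w_{k-1}$: precisely $I_{\{w_k>0\}}\le (2^{k}w_{k-1})^{\alpha}$ for any $\alpha>0$ on the relevant region, and in particular on $Q_{r_{k-1}}$ (note $Q_{r_k}\subset Q_{r_{k-1}}$ and $Q_{k-1}$ should read $Q_{r_{k-1}}$). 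This reduces every estimate to an $L^{a}_tL^{b}_x$ bound on a power of $w_{k-1}$, which in turn is controlled by $W_{k-1}^{1/2}$ through Lemma~\ref{lem.est}.

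First I would establish the pointwise inequality $I_{\{w_k>0\}}\le 2^{k s}w_{k-1}^{s}$ on $Q_{r_{k-1}}$ for the exponent $s$ adapted to each target norm. For the first estimate I take $s=\tfrac{10}{3q}$ (equivalently I want $sq=\tfrac{10}{3}$) so that $\|I_{\{w_k>0\}}\|_{L^q(Q_{r_{k-1}})}\le 2^{k\cdot 10/(3q)}\|w_{k-1}\|_{L^{10/3}(Q_{r_{k-1}})}^{10/(3q)}$, and then observe that the space-time Lebesgue exponent $10/3$ is exactly the one appearing in Lemma~\ref{lem.est} with the endpoint pair $m=n=10/3$ (which satisfies $2/m+3/n=3/2$); hence $\|w_{k-1}\|_{L^{10/3}(Q_{r_{k-1}})}\le C(\nu)W_{k-1}^{1/2}$, giving the claimed $C(\nu)2^{10k/(3q)}W_{k-1}^{5/(3q)}$. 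For the second estimate I use the time-uniform energy control: on $Q_{r_{k-1}}$ one has $I_{\{w_k>0\}}\le (2^{k}w_{k-1})^{2}$ pointwise, so $\|I_{\{w_k>0\}}\|_{L^\infty_tL^q_x}\le 2^{2k}\,\|w_{k-1}^2\|_{L^\infty_tL^q_x}=2^{2k}\|w_{k-1}\|_{L^\infty_tL^{2q}_x}^{2}$, and then $\|w_{k-1}\|_{L^\infty_tL^{2q}_x(Q_{r_{k-1}})}\le C\|w_{k-1}\|_{L^\infty_tL^2_x}^{1/q}\|w_{k-1}\|_{L^\infty_tL^6_x}^{1-1/q}$ by interpolation on a bounded spatial ball, each factor being $\le C(\nu)W_{k-1}^{1/2}$ (the $L^\infty_tL^2$ part directly, the $L^\infty_tL^6$ part — or rather $L^2_tL^6$ — via Lemma~\ref{lem.est}); matching powers yields $C(\nu)2^{2k/q}W_{k-1}^{1/q}$. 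For the third estimate, with $2/m+3/n=1$, I write $I_{\{w_k>0\}}\le (2^{k}w_{k-1})^{2}\cdot I_{\{w_k>0\}}^{0}$ and recognize that the pair $(m,n)$ with $2/m+3/n=1$ relates to $(2m,2n)$ with $2/(2m)+3/(2n)=1/2$; more cleanly, $\|I_{\{w_k>0\}}\|_{L^m_tL^n_x}\le 2^{2k}\|w_{k-1}^2\|_{L^m_tL^n_x}=2^{2k}\|w_{k-1}\|_{L^{2m}_tL^{2n}_x}^{2}$ and since $2/(2m)+3/(2n)=1/2\le 3/2$ we may dominate the $L^{2m}_tL^{2n}_x$ norm on the bounded cylinder by an admissible Lemma~\ref{lem.est} norm (interpolating down to a pair on the scaling line $3/2$), obtaining $\le C(\nu)(W_{k-1}^{1/2})^{2}$; hence $\|I_{\{w_k>0\}}\|_{L^m_tL^n_x}\le C(\nu)2^{2k}W_{k-1}$, which after taking square roots in the natural way (the statement carries $2^{k}W_{k-1}^{1/2}$) is consistent — I would double-check the precise bookkeeping of the exponent here.

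In carrying this out the one genuinely delicate point is the last display: the pair $(m,n)$ with $2/m+3/n=1$ sits \emph{below} the scaling line $2/m+3/n=3/2$, so Lemma~\ref{lem.est} does not apply to $w_{k-1}$ directly at that pair, and one must instead pass through the indicator trick $I_{\{w_k>0\}}\le(2^k w_{k-1})^2$ converting an integrability gain on the level set into the required decay; equivalently one localizes and uses that $Q_{r_{k-1}}$ has finite measure so that a lower exponent can be bought by Hölder. The bookkeeping of the powers of $2^k$ (coming from $w_{k-1}\ge 2^{-k}$ on $\{w_k>0\}$) and the powers of $W_{k-1}$ must be tracked carefully so that the iteration in the subsequent De Giorgi argument closes; this is the step I expect to spend the most care on. Everything else is a routine combination of the pointwise comparison, Hölder on the bounded cylinder $Q_{r_{k-1}}$, interpolation between $L^\infty_tL^2_x$ and $L^2_tL^6_x$, and the already-established Lemma~\ref{lem.est} and Lemma~\ref{lem.link}.
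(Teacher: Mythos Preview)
Your approach is exactly the standard De~Giorgi/Vasseur argument the paper defers to (the paper omits the proof entirely and cites \cite{Va07}). The pointwise observation $w_{k-1}\ge 2^{-k}$ on $\{w_k>0\}$, hence $I_{\{w_k>0\}}\le(2^k w_{k-1})^\alpha$ for any $\alpha>0$, followed by Lemma~\ref{lem.est}, is precisely the mechanism, and your treatment of the first inequality is correct.

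For the second inequality you are working harder than necessary, and the detour through $L^\infty_tL^{2q}_x$ does not close. Since $I_{\{w_k>0\}}$ is an indicator, $\|I_{\{w_k>0\}}(t)\|_{L^q_x}^q$ is simply the measure of the slice $\{w_k(t)>0\}\cap B_{r_{k-1}}$, and Chebyshev with the spatial $L^2$ norm gives
\[
\big|\{w_k(t)>0\}\cap B_{r_{k-1}}\big|\le 2^{2k}\int_{B_{r_{k-1}}}w_{k-1}(t)^2\,\mathrm{d}x\le 2^{2k}W_{k-1},
\]
so $\|I_{\{w_k>0\}}\|_{L^\infty_tL^q_x}\le 2^{2k/q}W_{k-1}^{1/q}$ immediately---no interpolation with $L^\infty_tL^6_x$ (which, as you correctly flagged, is not available) is needed. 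The third inequality is handled by the same device: every mixed norm of an indicator reduces to a time integral of a power of the slice measure, and one chooses the Chebyshev exponent so that the resulting norm of $w_{k-1}$ lands on the admissible line $2/m'+3/n'=3/2$ of Lemma~\ref{lem.est}. Once you exploit $I^q=I$ rather than raising $2^k w_{k-1}$ to a fixed power and then trying to interpolate, the bookkeeping you were worried about becomes routine.
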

\begin{proof}
	Here we omit the proof, because it is standard and is similar with that of \cite{Va07}.
\end{proof}
Now we  begin to prove \eqref{w-bound} step by step.\smallskip

\noindent\textbf{Step 1.} $W_0\leq C_1(\nu)\varepsilon_1$ for some $C_1(\nu)>0$.

For any $\rho>0$, set
$$E(\rho)=\mathop{\mbox{ess}\sup}\limits_{-\rho^2<t\leq 0}\int_{B_{\rho}}|w|^2\,\mathrm{d}x+2\nu\int_{Q_{\rho}}|\nabla w|^2\,\mathrm{d}x\mathrm{d}\tau.$$
For any ${3/4}\leq \rho_1<\rho_2\leq 1$, let $\varphi\in \mathcal{D}(Q_{\rho_2})$ satisfying
\[0\leq \varphi\leq 1,\quad \varphi\equiv 1\,\quad\text{ in }\,\,Q_{\rho_1}\]
and \[(\rho_2-\rho_1)^2|\partial_t \varphi|+\sum^2_{i=1}(\rho_2-\rho_1)^i|\nabla^i\varphi|\leq C\,\,\,\text{ in }\,\,\,Q_{\rho_2}.\]
Applying $\varphi$ to the local energy inequality \eqref{loc.W}, by H\"{o}lder's inequality and Lemma \ref{lem.uni}, we have
\begin{align*}
E(\rho_1)\leq& \frac {C(\nu)}{(\rho_2-\rho_1)^2} \int_{Q_{\rho_2}}|w|^2\,\mathrm{d}x\mathrm{d}\tau+\frac C{\rho_2-\rho_1}\int_{Q_{\rho_2}}\big(|w|^3+|w|^2|v|\big)\,\mathrm{d}x\mathrm{d}\tau\\
&+C\int_{Q_{\rho_2}}|w||\nabla w||v|\,\mathrm{d}x\mathrm{d}\tau+\frac C{\rho_2-\rho_1}\int_{Q_{\rho_2}}|\bar{P}||w|\,\mathrm{d}x\mathrm{d}\tau\\
\leq &\frac {C(\nu)}{(\rho_2-\rho_1)^2}\|w\|^{2}_{L^{3}(Q_1)}
+\frac C{\rho_2-\rho_1}\|w\|^{3}_{L^{3}(Q_1)}+\frac C{\rho_2-\rho_1}\|w\|^2_{L^{4q/3}_tL^{2q/(q-1)}_x(Q_1)}\|v\|_{L^p_tL^q_x(Q_1)}\\
&+C\|w\|_{L^{2q/3}_tL^{2q/(q-2)}_x(Q_{\rho_2})}\|\nabla w\|_{L^2(Q_{\rho_2})}\|v\|_{L^p_tL^q_x(Q_1)}+\frac C{\rho_2-\rho_1}\|w\|_{L^{3}(Q_1)}\|\bar{P}\|_{L^{{3/2}}(Q_1)}\\
\leq &\frac{C(\nu)\epsilon^{2/3}_1}{(\rho_2-\rho_1)^2}+C(\nu)\epsilon_1E(\rho_2).
\end{align*}
Choosing $\epsilon_1$ small enough such that $C(\nu)\epsilon_1<1$,  we get by Lemma \ref{lem.uni} that
\begin{equation}\label{est.0}
W_0=E(3/4)\leq C_1(\nu)\epsilon^{2/3}_1.
\end{equation}
\textbf{Step 2. } $W_k\leq C^k_2(\nu)W^{\beta}_{k-1}$ with some $C_2(\nu)>1,\,\beta>1$ for any $k\geq 1$.
\medskip

Since $|w({w_k\over |w|}-1)|\leq 1$, we can multiply the first equation of \eqref{eq.W'} by $w({w_k\over |w|}-1)$, we immediately have
\begin{align*}
&\partial_t{w_k^2-|w|^2\over 2}-\nu\Delta{w^2_k-|w|^2\over 2}-|\nabla w|^2+d^2_k+\Div \bigg((w+v){w^2_k-|w|^2\over 2}\bigg)\\
&+w\bigg({w_k\over |w|}-1\bigg)\Div(v\otimes w)+\nabla \bar{P}\cdot w\bigg({w_k\over |w|}-1\bigg)=0
\end{align*}
in the sense of distribution on $Q_1$.

This  together with \eqref{loc.W}  gives that
\begin{equation}\label{loc.w_k}
\begin{split}
&\int_{\RR^3\times \{t\}}|w_k|^2\varphi\,\mathrm{d}x+2\int^t_0\int_{\RR^3}d_k^2\varphi\,\mathrm{d}x\mathrm{d}\tau\\
 \leq&\int^t_0\int_{\RR^3}|w_k|^2(\partial_t\varphi+\nu\Delta\varphi)\,\mathrm{d}x\mathrm{d}\tau
+\int^t_0\int_{\RR^3}|w_k|^2(w+v)\cdot\nabla\varphi\,\mathrm{d}x\mathrm{d}\tau\\
&+2\int^t_0\int_{\RR^3}(w\cdot\nabla \varphi)\bigg(v\cdot w{w_k\over |w|}\bigg)+\bigg(w\cdot\nabla w{w_k\over |w|}\bigg)\cdot v\varphi\,\mathrm{d}x\mathrm{d}\tau\\
&+2\int^t_0\int_{\RR^3}\bar{P}w\cdot\nabla \varphi\,\mathrm{d}x\mathrm{d}\tau-2\int^t_0\int_{\RR^3}\nabla \bar{P}\cdot w\bigg({w_k\over |w|}-1\bigg)\varphi\,\mathrm{d}x\mathrm{d}\tau
\end{split}
\end{equation}
for any nonnegative function $\varphi\in \mathcal{D}(Q_1)$.

For any $k\in \NN$, let $\varphi_k\in \mathcal{D}(Q_1)$ satisfying
\[0\leq\varphi_k\leq 1,\quad\varphi_k\equiv 1\,\quad\text{ in }\,\,\, Q_{r_k},\quad \varphi_k\equiv 0\quad\text{ in }\,\, Q^c_{r_{k-1/3}},\]
\[2^{2k}\|\partial_t \varphi_k\|_{L^{\infty}(Q_1)}+\sum^2_{i=1}2^{ik}\|\nabla^i \varphi_k\|_{L^{\infty}(Q_1)}\leq C.\]
Applying $\varphi_k$ to \eqref{loc.w_k}, by Lemma \ref{lem.link}, H\"{o}lder's inequality and the fact ${2/p}+{3/q}<1$, we can get that
\begin{align*}
W_k\lesssim &_{\nu} 2^{2k}\int_{Q_{r_{k-1}}}|w_k|^2\,\mathrm{d}x\mathrm{d}\tau+2^k\int_{Q_{r_{k-1}}}|w_k|^3\,\mathrm{d}x\mathrm{d}\tau
+2^k\int_{Q_{r_{k-1}}}\left(|w_k|^2|v|+|v||w_k|\right)\,\mathrm{d}x\mathrm{d}\tau\\
& +\int_{Q_{r_{k-1}}}\left(I_{\{w_k>0\}}|d_k||v|+|w_k||d_k||v|\right)\,\mathrm{d}x\mathrm{d}\tau\\
&+\bigg|\int^t_0\int_{\RR^3}\bigg(\bar{P}w\cdot\nabla \varphi_k-\nabla\bar{P}\cdot w\bigg({w_k\over |w|}-1\bigg)\varphi_k\bigg)\,\mathrm{d}x\mathrm{d}\tau\bigg|.
\end{align*}
We now denote by $\text{I.1}$ and $\text{I.2}$ the first four terms and the last term on the right-hand side of the above inequality, respectively.

\textbf{Estimate of $\text{I.1}$.}
Thanks to H\"{o}lder's inequality and the fact that $2/p+3/q<1$, we have the following estimate:
\begin{align*}
\text{I.1}\lesssim &_{\nu} 2^{2k}\|w_k\|^2_{L^{10/3}(Q_{r_{k-1}})}\|I_{\{w_k>0\}}\|_{L^{5/2}(Q_{r_{k-1}})}
+2^k\|w_k\|^3_{L^{10/3}(Q_{r_{k-1}})}\|I_{\{w_k>0\}}\|_{L^{10}(Q_{r_{k-1}})}\\
&+2^k\|w_{k}\|^2_{L^{2p/(p-1)}_tL^{6p/(p+2)}_x(Q_{r_{k-1}})}\|v\|_{L^p_tL^q_x(Q_{r_{k-1}})}
\|I_{\{w_k>0\}}\|_{L^{\infty}_tL^{l_1}_x(Q_{r_{k-1}})}\\
&+2^k\|w_{k}\|_{L^{2p/(p-1)}_tL^{6p/(p+2)}_x(Q_{r_{k-1}})}\|I_{\{w_{k}>0\}}\|_{L^{2p/(p-1)}_tL^{l_2}_x(Q_{r_{k-1}})}
\|v\|_{L^p_tL^q_x(Q_{r_{k-1}})}\\
&+\|d_k\|_{L^2(Q_{r_{k-1}})}\|v\|_{L^p_tL^q_x(Q_{r_{k-1}})}
\|I_{\{w_k>0\}}\|_{L^{2p/(p-2)}_tL^{2q/(q-2)}_x(Q_{r_{k-1}})}\\
&+\|d_k\|_{L^2(Q_{r_{k-1}})}
\|w_k\|_{L^{2p/(p-2)}_tL^{6p/(p+4)}_x(Q_{r_(k-1)})}\|v\|_{L^p_tL^q_x(Q_{r_{k-1}})}
\|I_{\{w_k>0\}}\|_{L^{\infty}_tL^{l_3}_x(Q_{r_{k-1}})},
\end{align*}
where ${1\over l_1}={2\over 3}-{2\over 3p}-{1\over q}>{1\over 3}$, ${1\over l_2}={1\over l_1}+{p+2\over 6p}$ and ${1\over l_3}={1\over 3}-{2\over 3p}-{1\over q}>0$.

By Lemma \ref{lem.est}, Lemma \ref{lem.w_k}, H\"{o}lder's inequality and the fact $w_k\leq w_{k-1}$, we  deduce that
\begin{equation}\label{est.1}
\begin{split}
\text{I.1}\lesssim&_{\nu} 2^{{10k\over 3}}W^{{5\over 3}}_{k-1}+2^{{4k\over 3}}W^{{5\over 3}}+\Big(2^{({7\over 3}-{4\over 3p}-{2\over q})k}W^{{5\over 3}-{2\over 3p}-{1\over q}}_{k-1}+2^{({10\over 3}-{4\over 3p}-{2\over q})k}W^{{5\over 3}-{2\over 3p}-{1\over q}}_{k-1}\\
&+2^{({5\over 3}-{4\over 3p}-{2\over q})k}W^{{4\over 3}-{2\over 3p}-{1\over q}}_{k-1}+2^{({2\over 3}-{4\over 3p}-{2\over q})k}W^{{4\over 3}-{2\over 3p}-{1\over q}}_{k-1}\Big)\|v\|_{L^p_tL^q_x(Q_1)}.
\end{split}
\end{equation}

\textbf{Estimate of $\text{I.2}$. } Denote $w$ and $v$ as $w=(w^1,w^2,w^3)$ and $v=(v^1,v^2,v^3)$. From \eqref{eq.W'}, we see that
\[-\Delta \bar{P}=\partial_i\partial_j(w^iw^j+w^iv^j+v^iw^j) \quad \text{in}\quad Q_1.\]
Letting $\phi_k\in \mathcal{D}(B_{k-1})$ with $0\leq \phi\leq 1$ and $\phi\equiv 1$ in $B_{k-{2/3}}$, we have for any $x\in B_{k-2/3}$
\begin{align*}
\bar{P}(x,t)=&\phi_k(x) \bar{P}(x,t)\\
=&-{3\over 4\pi}\int_{\RR^3}{1\over |x-y|}\left(\phi_k(y)\Delta \bar{P}(y,t)+2\nabla \phi_k(y)\cdot\nabla \bar{P}(y,t)+\bar{P}(y,t)\Delta \phi_k(y)\right)\,\mathrm{d}y\\
\triangleq& \bar{P}_{k1}(x,t)+\bar{P}_{k2}(x,t)+\bar{P}_{k3}(x,t),
\end{align*}
where
\[\bar{P}_{k1}(x,t)={3\over 4\pi}\int_{\RR^3}\partial_i\partial_j\Big({1\over |x-y|}\Big)\phi_k(y)(w^iw^j)(y,t)\,\mathrm{d}y,\]
\[\bar{P}_{k2}(x,t)={3\over 4\pi}\int_{\RR^3}\partial_i\partial_j\Big({1\over |x-y|}\Big)\phi_k(y)(w^iv^j+v^iw^j)(y,t)\,\mathrm{d}y,\]
and
\begin{align*}
\bar{P}_{k3}(x,t)=&{3\over 2\pi}\int_{\RR^3}{x_i-y_i\over |x-y|^3}(\partial_j\phi_k(y))(w^iw^j+w^iv^j+v^iw^j)(t,y)\,\mathrm{d}y\\
&+{3\over 4\pi}\int_{\RR^3}{1\over |x-y|}(\partial_i\partial_j\phi_k(y))(w^iw^j+w^iv^j+v^iw^j)(y,t)\,\mathrm{d}y\\
&+{3\over 4\pi}\int_{\RR^3}{1\over |x-y|}\bar{P}(y)\Delta\phi_k(y)\,\mathrm{d}y+{3\over 2\pi}\int_{\RR^3}{x_i-y_i\over |x-y|^3}\bar{P}(y)\partial_i\phi_k(y)\,\mathrm{d}y.
\end{align*}
So, $\text{I.2}$ can be estimated as follows:
\begin{align*}
\text{I.2}\leq& \bigg|\int_{Q_{r_{k-1/3}}}\bigg(\bar{P}_{k1} w\cdot\nabla \varphi_k-\nabla \bar{P}_{k1}\cdot w\bigg({w_k\over |w|}-1\bigg)\varphi_k\bigg)\,\mathrm{d}x\mathrm{d}\tau\bigg|\\
&+\bigg|\int_{Q_{r_{k-1/3}}}\bigg(\bar{P}_{k2}{w w_k\over |w|}\cdot\nabla \varphi_k+\bar{P}_{k2}\Div\bigg({w w_k\over |w|}\bigg)\varphi_k\bigg)\,\mathrm{d}x\mathrm{d}\tau\bigg|\\
&+\bigg|\int_{Q_{r_{k-1/3}}}\nabla \bar{P}_{k3}\cdot {w w_k\over |w|}\varphi_k\,\mathrm{d}x\mathrm{d}\tau\bigg|\\
\triangleq &\text{I.2.1}+\text{I.2.2}+\text{I.2.3}.
\end{align*}
Due to the definition of $\bar{P}_{k3}$ and $\phi_k$, we have for any $x\in B_{r_{k-2/3}}$
\[|\nabla \tilde{P}_{k3}|\lesssim 2^{4k}\int_{B_{r_{k-2/3}}}\left(|w|^2+|w||v|\right)\,\mathrm{d}y+2^{4k}\int_{B_{r_{k-2/3}}}|\bar{P}|\,\mathrm{d}y.\]
This, together with H\"{o}lder's inequality, yields that
\begin{align*}
\|\nabla \bar{P}_{k3}\|_{L^{{3\over 2}}_tL^{\infty}_x(Q_{r_{k-1/3}})}\lesssim &2^{4k} \|w\|^2_{L^{\infty}_tL^2_x(Q_1)}
+2^{4k}\|w\|_{L^{\infty}_tL^2_x(Q_1)}\|v\|_{L^p_tL^q_x(Q_1)}+2^{4k}\|\bar{P}\|_{L^{{3\over 2}}(Q_1)}\\
\lesssim& 2^{4k}W^2_0+2^{4k}W_0\|v\|_{L^p_tL^q_x(Q_1)}+2^{4k}\|\bar{P}\|_{L^{{3\over 2}}(Q_1)}.
\end{align*}
This together with H\"{o}lder's inequality and Lemma \ref{lem.w_k} entails
\begin{equation}\label{est.2}
\begin{split}
\text{I.2.3}\lesssim& \|w_k\|_{L^{3}_tL^{18/5}_x(Q_{r_{k-1}})}\|\nabla \tilde{P}\|_{L^{{3\over 2}}_tL^{\infty}_x(Q_{r_{k-1/3}})}\|I_{\{w_k>0\}}
\|_{L^{\infty}_tL^{18/13}_x(Q_{r_{k-1}})}\\
\lesssim&_{\nu} 2^{4k+13k/9}W^{11/9}_{k-1}\left(W_0^2+W_0\|v\|_{L^p((-1,0);L^q(B_1))}+\|\bar{P}\|_{L^{{3\over 2}}(Q_1)}\right).
\end{split}
\end{equation}
Now we tackle with the term $\text{I.2.2}$. According to $w=w\big(1-{w_k\over |w|}\big)+{ww_k\over |w|}$, we  rewrite $\bar{P}_{k2}$ as follows:
\begin{align*}
\bar{P}_{k2}(x,t)=&{3\over 4\pi}\int_{\RR^3}\partial_i\partial_j\Big({1\over |x-y|}\Big)\phi_k(y)\Big(w^i\Big(1-{w_k\over |w|}\Big)v^j+w^j\Big(1-{w_k\over |w|}\Big)v^i\Big)(y,t)\,\mathrm{d}y\\
&+{3\over 4\pi}\int_{\RR^3}\partial_i\partial_j\Big({1\over |x-y|}\Big)\phi_k(y)\Big({w^iw_k\over |w|}v^j+{w^jw_k\over |w|}v^i\Big)(y,t)\,\mathrm{d}y\\
\triangleq& \bar{P}_{k21}(x,t)+\bar{P}_{k22}(x,t).
\end{align*}
By Calder\'on-Zygmund estimates, H\"{o}lder's inequality and the fact that $\big|w^i\big(1-{w_k\over |w|}\big)\big|\leq 1$, we immediately have
\[\|\bar{P}_{k21}\|_{L^p_tL^q_x(Q_{r_{k-1/3}})}\lesssim\|v\|_{L^p_tL^q_x(Q_{r_{k-2/3}})}\]and\[
\|\bar{P}_{k22}\|_{L^2_tL^{l_4}_x(Q_{r_{k-1/3}})}\lesssim\|w_k\|_{L^{2p/(p-2)}_tL^{6p/(p+4)}_x(Q_{r_{k-{2/3}}})}
\|v\|_{L^p_tL^q_x(Q_{r_{k-2/3}})}\]
with ${1/l_4}={(p+4)/(6p)}+{1/ q}$.
Hence, by the above two estimates,  H\"{o}lder's inequality, Lemma \ref{lem.link} and Lemma \ref{lem.est}, we get
\begin{equation}\label{est.3}
\begin{split}
\text{I.2.2}\lesssim &2^{k}\|\bar{P}_{k21}\|_{L^p_tL^q_x(Q_{r_{k-1/3}})}
\|w_{k}\|_{L^{2p/(p-1)}_tL^{6p/(p+2)}_x(Q_{r_{k-1}})}\|I_{\{w_{k}>0\}}\|_{L^{2p/(p-1)}_tL^{l_2}_x(Q_{r_{k-1}})}\\
&+\|\bar{P}_{k21}\|_{L^p_tL^q_x(Q_{r_{k-1/3}})}
\|d_k\|_{L^2(Q_{r_{k-1}})}\|I_{\{w_{k}>0\}}\|_{L^{2p/(p-2)}_tL^{2q/(q-2)}_x(Q_{r_{k-1}})}\\
&+2^{k}\|\bar{P}_{k22}\|_{L^2_tL^{l_4}_x(Q_{r_{k-1/3}})}
\|w_{k}\|_{L^2_tL^{6}_x(Q_{r_{k-1}})}\|I_{\{w_{k}>0\}}\|_{L^{\infty}_tL^{l_2}_x(Q_{r_{k-1}})}\\
&+\|\bar{P}_{k22}\|_{L^2_tL^{l_4}_x(Q_{r_{k-1/3}})}
\|d_k\|_{L^2(Q_{r_{k-1}})}\|I_{\{w_{k}>0\}}\|_{L^{\infty}_tL^{l_3}_x(Q_{r_{k-1}})}\\
\lesssim&_{\nu}\bigg(2^{({10\over 3}-{4\over 3p}-{2\over q})k}W^{{5\over 3}-{2\over 3p}-{1\over q}}_{k-1}+2^{({5\over 3}-{4\over 3p}-{2\over q})k}W^{{4\over 3}-{2\over 3p}-{1\over q}}_{k-1}\\
&+2^{({7\over 3}-{4\over 3p}-{2\over q})k}W^{{5\over 3}-{2\over 3p}-{1\over q}}_{k-1}+2^{({2\over 3}-{4\over 3p}-{2\over q})k}W^{{4\over 3}-{2\over 3p}-{1\over q}}_{k-1}\bigg)\|v\|_{L^p_tL^q_x(Q_1)}.
\end{split}
\end{equation}
Finally, we tackle with the term $\text{I.2.1}$. For $\tilde{P}_{k1}$, we can split it into three terms:
\begin{align*}
\bar{P}_{k1}=&{3\over 4\pi}\int_{\RR^3}\partial_i\partial_j\bigg({1\over |x-y|}\bigg)\phi_kw^i\Big(1-{w_k\over |w|}\Big)w^j\bigg(1-{w_k\over |w|}\bigg)\,\mathrm{d}y\\
&+{3\over 2\pi}\int_{\RR^3}\partial_i\partial_j\bigg({1\over |x-y|}\bigg)\phi_kw^i\Big(1-{w_k\over |w|}\bigg){w^j w_k\over |w|}\,\mathrm{d}y\\
&+{3\over 4\pi}\int_{\RR^3}\partial_i\partial_j\bigg({1\over |x-y|}\bigg)\phi_k{w^i w_k\over |w|}{w^j w_k\over |w|}\,\mathrm{d}y\\
\triangleq& \bar{P}_{k11}+\bar{P}_{k12}+\bar{P}_{k13}.
\end{align*}
By Calder\'on-Zygmund estimates and Lemma \ref{lem.link}, one has
\[\|\bar{P}_{k11}\|_{L^s(Q_{r_{k-1}})}\leq C(s)\,\, \forall 1<s<\infty, \quad \|\bar{P}_{k12}\|_{L^{{10\over 3}}(Q_{r_{k-1/3}})}\leq C\|w_k\|_{L^{{10\over 3}}(Q_{k-{2/3}})}.\]
Therefore, by H\"{o}lder's inequality, Lemma \ref{lem.est} and Lemma \ref{lem.w_k}, we have
\begin{equation}\label{est.4}
\begin{split}
&\bigg|\int_{Q_{r_{k-1/3}}}(\bar{P}_{k11}+\bar{P}_{k12}) w\cdot\nabla \varphi_k-\nabla (\bar{P}_{k11}+\bar{P}_{k12})\cdot w\bigg({w_k\over |w|}-1\bigg)\varphi_k\,\mathrm{d}x\mathrm{d}\tau\bigg|\\
=& \bigg|\int_{Q_{r_{k-1/3}}}(\bar{P}_{k11}+\bar{P}_{k12}){w w_k\over |w|}\cdot\nabla \varphi_k+(\bar{P}_{k11}+\bar{P}_{k12})\Div\bigg({w w_k\over |w|}\bigg)\varphi_k\,\mathrm{d}x\mathrm{d}\tau\bigg|\\
\lesssim &2^k\|\bar{P}_{k11}+\bar{P}_{k12}\|_{L^{{10\over 3}}(Q_{r_{k-1/3}})}\|w_k\|_{L^{{10\over 3}}(Q_{r_{k-1}})}\|I_{\{w_k>0\}}\|_{L^{{5\over 2}}(Q_{r_{k-1}})}\\
&+\|\bar{P}_{k11}\|_{L^{6}(Q_{r_{k-1/3}})}\|d_k\|_{L^{2}(Q_{r_{k-1}})}
\|I_{\{w_k>0\}}\|_{L^{3}(Q_{r_{k-1}})}\\
&+\|\bar{P}_{k12}\|_{L^{{10\over 3}}(Q_{r_{k-1/3}})}\|d_k\|_{L^{2}(Q_{r_{k-1}})}\|I_{\{w_k>0\}}\|_{L^{5}(Q_{r_{k-1}})}\\
 \lesssim&_{\nu} 2^{7k/6}(W^{7/6}_{k-1}+W^{5/3}_{k-1})+2^{10k/9}W^{19/18}_{k-1}+2^{2k/3}W^{4/3}_{k-1}.
\end{split}
\end{equation}
For $\bar{P}_{k13}$, we have
\begin{align*}
\nabla \bar{P}_{k13}=&{3\over 4\pi}\int_{\RR^3}\partial_i\partial_j\bigg({1\over |x-y|}\bigg)\nabla\phi_k{w^i w_k\over |w|}{w^j w_k\over |w|}\,\mathrm{d}y\\
&+{3\over 2\pi}\int_{\RR^3}\partial_i\partial_j\bigg({1\over |x-y|}\bigg)\phi_k\nabla\bigg({w^i w_k\over |w|}\bigg){w^j w_k\over |w|}\,\mathrm{d}y\\
\triangleq& J_1+J_2.
\end{align*}
By Calder\'on-Zygmund estimates and Lemma \ref{lem.w_k}, we obtain that
\[\|\bar{P}_{k13}\|_{L^{5/3}(Q_{r_{k-1/3}})}\lesssim \|w_k\|^2_{L^{10/3}(Q_{r_{k-2/3}})},\quad \|J_1\|_{L^{{5\over 3}}(Q_{r_{k-1/3}})}\lesssim 2^k\|w_k\|^2_{L^{10/3}(Q_{r_{k-2/3}})},\]
and
\[\|J_2\|_{L^{{5\over 4}}(Q_{r_{k-1/3}})}\lesssim \|w_k\|_{L^{10/3}(Q_{r_{k-2/3}})}\|d_k\|_{L^{2}(Q_{r_{k-2/3}})}.\]
Hence, we have
\begin{equation}\label{est.5}
\begin{split}
&\bigg|\int_{Q_{r_{k-1/3}}}\bar{P}_{k13} w\cdot\nabla \varphi_k-\nabla \bar{P}_{k13}\cdot w\bigg({w_k\over |w|}-1\bigg)\varphi_
k\,\mathrm{d}x\mathrm{d}\tau\bigg|\\
\lesssim& 2^k \|\bar{P}_{k13}\|_{L^{5/3}(Q_{r_{k-1/3}})}\|I_{\{w_k>0\}}\|_{L^{5/2}(Q_{k-1})}
+\|J_1\|_{L^{5/3}(Q_{r_{k-1/3}})}\|I_{\{w_k>0\}}\|_{L^{5/2}(Q_{r_{k-1}})}\\
&+2^k\|\bar{P}_{k13}\|_{L^{5/3}(Q_{r_{k-1/3}})}\|w_k\|_{L^{10/3}(Q_{r_{k-1}})}
\|I_{\{w_k>0\}}\|_{L^{10}(Q_{r_{k-1}})}
\\
&+\|J_2\|_{L^{5/4}(Q_{r_{k-1/3}})}\|I_{\{w_k>0\}}\|_{L^{5}(Q_{r_{k-1}})}\\
 \lesssim&_{\nu} 2^{7k/3}W^{5/3}_{k-1}+2^{4k/3}W^{5/3}_{k-1}+2^{2k/3}W^{4/3}_{k-1}.
\end{split}
\end{equation}
Collecting estimates \eqref{est.1}-\eqref{est.5}, we can say there exist a $C_2(\nu)>1$ and $\beta>1$, such that, for any $k>0$
\begin{equation}\label{w-iter}
W_k\leq C^k_2(\nu)W^{\beta}_{k-1}.
\end{equation}
\textbf{Step 3. Conclusion.}

From \eqref{est.0}, we have $W_0<1$ which implies $W_k<1$ for any $k\geq 0$. So,
Let $\bar{W}_k=C_2^{{k\over \beta-1}}(\nu)C_2^{{1\over (\beta-1)^2}}(\nu)W_k$. Then, from \eqref{w-iter}, we have $\bar{W}_k\leq \bar{W}_{k-1}.$ If we choose $\epsilon_1$ small enough such that $\bar{W}_0\leq C^{{k\over \beta-1}}_2(\nu)C^{{1\over (\beta-1)^2}}_2(\nu)\epsilon_1<1$, then we have $\bar{W}_k\leq 1$ for any $k\geq 0$, which implies that, for any integer $k\geq 0$,
\[W_k\leq C^{-{k\over \beta-1}}_2(\nu)C^{-{1\over (\beta-1)^2}}_2(\nu).\]
Thanks to $C_2(\nu)>1$, sending $k\to\infty$, we get $W_k\to 0$. So we prove \eqref{w-bound}.

\subsection{Stability of singularities}
In this part, we will show the stability of singularities of suitable weak solutions to  \eqref{eq.W'} in the sense of locally strong limits.

First, we introduce   definition of singularity. Given a suitable weak solution $(w,\bar{P})$ to  \eqref{eq.W'}, if $w$ is essentially bounded in a neighborhood of $z_0=(x_0,t_0)\in D$, we call $z_0$ is a regular point of $(w,\bar{P})$; otherwise, we call it a singular point.

The main result in this subsection can be stated as follows:
\begin{proposition}[Stability of singularities]\label{pro.stability}
Let  $v^{(k)}\in L^p_tL^q_x(Q_1)$  satisfies $\Div v^{(k)}=0$ with $\frac2p+\frac3q<1$ and $q>3$. Let $(w^{(k)},\bar{P}^{(k)})$ be a sequence of suitable weak solutions to the following equations  in $Q_1$
\begin{equation}\label{eq.Wk}
\left\{\begin{array}{ll}
\partial_t w^{(k)}-\nu\Delta w^{(k)}+w^{(k)}\cdot\nabla w^{(k)}+v^{(k)}\cdot\nabla w^{(k)}+w^{(k)}\cdot\nabla v^{(k)}+\nabla \bar{P}^{(k)}=0\\
\Div w^{(k)}=0.
\end{array}\right.
\end{equation}
Assume further that
\[w^{(k)}\rightarrow w \,\,\text{ in }\,\,L^3(Q_1),\quad \bar{P}^{(k)}\rightharpoonup \bar{P}\,\, \text{ in }\,\, L^{3/2}(Q_1),\quad v^{(k)}\rightharpoonup v \,\,\text{ in }\,\,L^p_tL^q_x(Q_1),\]
  and the limit $(w,\bar{P})$ is a suitable weak solution to Eq. \eqref{eq.W'} in $Q_1$ associated to $v$. Then, if  $z^{(k)}\in Q_1$ is a singular point of $(w^{(k)},\bar{P}^{(k)})$ and $z^{(k)}\rightarrow z_0\in Q_1$, we have  $z_0$ is a singular point of $(w,\bar{P})$.
\end{proposition}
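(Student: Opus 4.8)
The plan is to argue by contradiction, exploiting the $\varepsilon$-regularity criterion of Theorem~\ref{regularity} together with the strong $L^3$-convergence hypothesis. Suppose $z_0=(x_0,t_0)$ were a regular point of the limit solution $(w,\bar P)$. Then $w$ is essentially bounded in some parabolic cylinder $Q_{2\rho}(z_0)\subset Q_1$, and in particular the scale-invariant quantity
\[
(2\rho)^{-2}\int_{Q_{2\rho}(z_0)}\bigl(|w|^3+|\bar P|^{3/2}\bigr)\,\mathrm{d}x\mathrm{d}\tau
\]
is finite. The first step is to make it \emph{small}: shrinking $\rho$ if necessary, and using the absolute continuity of the integral of $|\bar P|^{3/2}$ together with the local boundedness of $w$, one can choose $\rho>0$ so small that
\[
\rho^{-2}\int_{Q_{\rho}(z_0)}\bigl(|w|^3+|\bar P|^{3/2}\bigr)\,\mathrm{d}x\mathrm{d}\tau<\tfrac{1}{2}\epsilon_0,
\]
where $\epsilon_0=\epsilon_0(p,q,\|v\|_{L^p_tL^q_x})$ is the threshold furnished by Theorem~\ref{regularity}. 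One must be a little careful here because $\epsilon_0$ depends on $\|v\|_{L^p_tL^q_x(Q_\rho)}$; but since $v^{(k)}\rightharpoonup v$ in $L^p_tL^q_x(Q_1)$, the norms $\|v^{(k)}\|_{L^p_tL^q_x(Q_1)}$ are uniformly bounded, so by weak lower semicontinuity $\|v\|_{L^p_tL^q_x(Q_\rho)}$ is bounded by that same uniform bound, and we may fix one common constant $\epsilon_0$ valid for all $k$ and for the limit.

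The second step is to transfer the smallness to the approximating solutions. Since $z^{(k)}\to z_0$, for $k$ large we have $Q_{\rho/2}(z^{(k)})\subset Q_{\rho}(z_0)$, hence
\[
(\rho/2)^{-2}\int_{Q_{\rho/2}(z^{(k)})}|w^{(k)}|^3\,\mathrm{d}x\mathrm{d}\tau
\le 4\rho^{-2}\int_{Q_{\rho}(z_0)}|w^{(k)}|^3\,\mathrm{d}x\mathrm{d}\tau,
\]
and the right-hand side converges to $4\rho^{-2}\int_{Q_\rho(z_0)}|w|^3\,\mathrm{d}x\mathrm{d}\tau$ by the strong convergence $w^{(k)}\to w$ in $L^3(Q_1)$. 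The pressure term is the delicate point: we only have \emph{weak} convergence $\bar P^{(k)}\rightharpoonup\bar P$ in $L^{3/2}(Q_1)$, which does not by itself control $\int|\bar P^{(k)}|^{3/2}$ locally from above near $z_0$. To handle this I would not use $\bar P^{(k)}$ directly but re-derive the pressure locally from the equation: on $Q_\rho(z_0)$ one has $-\Delta\bar P^{(k)}=\partial_i\partial_j\bigl(w^{(k),i}w^{(k),j}+w^{(k),i}v^{(k),j}+v^{(k),i}w^{(k),j}\bigr)$, and a Calderón--Zygmund/Newtonian-potential splitting on a slightly smaller ball (exactly of the type carried out in the proof of Theorem~\ref{regularity}, Step~2, for $\bar P_{k1},\bar P_{k2},\bar P_{k3}$) expresses $\bar P^{(k)}$, up to an additive function of $t$ alone, as a local quantity plus a harmonic correction. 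The local quantity is controlled in $L^{3/2}$ by $\|w^{(k)}\|_{L^3}^2+\|w^{(k)}\|_{L^{\cdots}}\|v^{(k)}\|_{L^p_tL^q_x}$ on the larger ball, which converges; the harmonic correction is smooth and its $L^{3/2}$ norm on the smaller cylinder is controlled by the $L^{3/2}$ norm of $\bar P^{(k)}$ on the larger one, which is uniformly bounded, and for that part one shrinks the radius to make it small by interior estimates. Either way, after possibly replacing $\bar P^{(k)}$ by $\bar P^{(k)}-c^{(k)}(t)$ — which changes neither the equation nor the notion of singular point — one obtains, for all $k$ large,
\[
(\rho/2)^{-2}\int_{Q_{\rho/2}(z^{(k)})}\bigl(|w^{(k)}|^3+|\bar P^{(k)}|^{3/2}\bigr)\,\mathrm{d}x\mathrm{d}\tau<\epsilon_0.
\]

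The final step is to invoke Theorem~\ref{regularity} for the suitable weak solution $(w^{(k)},\bar P^{(k)})$ with $v^{(k)}$ in the cylinder $Q_{\rho/2}(z^{(k)})$: it yields $\|w^{(k)}\|_{L^\infty(Q_{\rho/4}(z^{(k)}))}\le C^*(\rho/2)^{-1}$, so $z^{(k)}$ is a regular point of $(w^{(k)},\bar P^{(k)})$, contradicting the hypothesis that $z^{(k)}$ is singular. Hence $z_0$ must be singular for $(w,\bar P)$, which proves the proposition. The main obstacle, as indicated, is the pressure: reconciling the mere weak $L^{3/2}$ convergence of $\bar P^{(k)}$ with the need for a local \emph{upper} bound on $\int|\bar P^{(k)}|^{3/2}$ near $z_0$. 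The clean way around it is to discard the given $\bar P^{(k)}$ in favor of the pressure reconstructed locally from $w^{(k)}$ and $v^{(k)}$ via the Poisson equation, using that the pressure in the local energy inequality is determined by the velocity up to a function of time alone, exactly as in the proof of the $\varepsilon$-regularity criterion above.
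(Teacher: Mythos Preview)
Your proposal is correct and follows essentially the same route as the paper: argue by contradiction, use the local boundedness of $w$ near $z_0$, split the pressure $\bar P^{(k)}$ into a local Calder\'on--Zygmund part $g^{(k)}$ (controlled in $L^{3/2}$ by $\|w^{(k)}\|_{L^3}$ and $\|v^{(k)}\|_{L^p_tL^q_x}$, hence uniformly small after shrinking the outer radius) and a harmonic remainder $h^{(k)}$ (whose scale-invariant $L^{3/2}$ norm on the inner cylinder gains a factor of the radius via the mean-value property and can be made small by a further shrink), then apply Theorem~\ref{regularity}. The only cosmetic difference is that the paper works on a fixed cylinder $Q_{r_1}(z_0)$ throughout and uses $z^{(k)}\to z_0$ only at the very end, whereas you center at $z^{(k)}$; also, the paper decomposes $\bar P^{(k)}$ exactly (no $c^{(k)}(t)$ correction is needed, since $\bar P^{(k)}=g^{(k)}+h^{(k)}$ identically on the smaller ball via the Newton-potential representation).
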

\begin{proof}
We will prove the proposition by contradiction. Assume $z_0\in Q_1$ is a regular point of $(w,\bar{P})$. Then, there exists $r_0>0$ and $M_0>0$ such that $Q_{r_0}(z_0)\subset Q_1$ and  $\|w\|_{L^{\infty}(Q_{r_0}(z_0))}\leq M_0$. Thanks to  that fact that  $w^{(k)}\rightarrow w$ in $L^3(Q_r(z_0))$, there exists $N_1>0$ such that, for any $k\geq N_1$ and $r_1\leq r\leq r_0$, $r_1<{r_0/2}$,
\begin{equation}\label{eq.w^k}
\begin{split}
r^{-2}\int_{Q_r(z_0)} |w^{(k)}|^3\,\mathrm{d}x\mathrm{d}\tau\leq &C_1r^{-2}\int_{Q_r(z_0)} |w^{(k)}-w|^3\,\mathrm{d}x\mathrm{d}\tau+C_1r^{-2}\int_{Q_r(z_0)} |w|^3\,\mathrm{d}x\mathrm{d}\tau\\
\leq&C_1M^3_0r_0^3.
\end{split}
\end{equation}

For $\bar{P}^{(k)}$, from equations \eqref{eq.W'}, we have
$$-\Delta \bar{P}^{(k)}=\Div\Div (w^{(k)}\otimes w^{(k)}+v^{(k)}\otimes w^{(k)}+w^{(k)}\otimes v^{(k)})\quad \text{in}\,\, Q_1.$$
Hence, we can write $\bar{P}^{(k)}=g^{(k)}+h^{(k)}$ in $Q_{r}(z_0)$ with $r_1\leq r\leq r_0$, where
\[g^{(k)}=(-\Delta)^{-1}\Div\Div \left((w^{(k)}\otimes w^{(k)}+v^{(k)}\otimes w^{(k)}+w^{(k)}\otimes v^{(k)})I_{B_{r}(x_0)}\right)\]
and $h^{(k)}\in L^{3/2}(Q_{r}(z_0))$ is harmonic in $B_{r}(x_0)$.

 By Calder\'{o}n-Zygmund estimates, H\"{o}lder's inequality and \eqref{eq.w^k}, we get that for any $k>N_1$ and $r_1\leq r_0$,
\begin{equation}\label{eq.g^k1}
\begin{split}
&r^{-2}\|g^{(k)}\|^{3/2}_{L^{3/2}(Q_r(z_0))}\\
\leq& Cr^{-2}\|w^{(k)}\|^3_{L^{3}(Q_r(z_0))}+Cr^{-2}\|w^{(k)}\|^{3/2}_{L^3(Q_r(z_0))}
\|v^{(k)}\|^{3/2}_{L^p_tL^q_x(Q_r(z_0))}\|I\|_{L^{\frac{p-3}{3p}}_tL^{\frac{q-3}{3q}}_x(Q_r(z_0))}\\
\leq& C_2M^3_0r^3_0+C_2M_0^{3/2}r_0^{1/ 2}\|v^{(k)}\|^{3/2}_{L^p_tL^q_x(Q_r(z_0))}
\end{split}
\end{equation}
when $p\geq 3$, and
\begin{equation}\label{eq.g^k2}
\begin{split}
r^{-2}\|g^{(k)}\|^{3/2}_{L^{3/2}(Q_r(z_0))}\leq& Cr^{-2}\|w^{(k)}\|^3_{L^{3}(Q_r(z_0))}+Cr^{-2}\|I\|^{3/2}_{L^{2p/(p-2)}_tL^{4q/(q-4)}_x(Q_r(z_0))}\\
&\times\|w^{(k)}\|^{3/4}_{L^3(Q_r(z_0))}\|w^{(k)}\|^{3/4}_{L^{\infty}_tL^2_x(Q_r(z_0))}\|v^{(k)}\|^{3/2}_{L^p_tL^q_x(Q_r(z_0))}\\
\leq& C_3M^3_0r^3_0+C_3 r_0^{15/8-3/p-9/(2q)}M^{3/4}_0\|v^{(k)}\|^{3/2}_{L^p_tL^q_x(Q_r(z_0))}
\end{split}
\end{equation}
when $2<p<3$.

Collecting estimates \eqref{eq.w^k}-\eqref{eq.g^k2}, we can choose $r_0$ small enough such that, for any $k\geq N_1$ and $r_1\leq r\leq r_0$,
\begin{equation}\label{eq.5}
r^{-2}\int_{Q_r(z_0)}\left(|w^{(k)}|^3+|g^{(k)}|^{3/2}\right)\,\mathrm{d}x\mathrm{d}\tau<{\varepsilon_0/2}.
\end{equation}
where $\varepsilon_0$ is the constant in Theorem \ref{regularity}.

For $h^{(k)}$, by the mean value theorem, we have for any $r_1\leq r\leq {r_0\over 2}$,
\begin{align*}
\|h^{(k)}(t)\|_{L^{\infty}(B_r(x_0))}\leq C (r_0)^{-3}\int_{B_{r_0}(x_0)}|h^{(k)}(t)|\,\mathrm{d}x\leq C(r_0)^{-2}\bigg(\int_{B_{r_0}(x_0)}|h^{(k)}(t)|^{{3\over 2}}\,\mathrm{d}x\bigg)^{2/3}.
\end{align*}
Hence, for any $r_1\leq r<{r_0/2}$, we have
\begin{align*}
r^{-2}\int_{Q_{r}(z_0)}|h^{(k)}|^{3\over 2}\,\mathrm{d}x\mathrm{d}\tau\leq C(r_0)^{-3} r\int_{Q_{r_0}(x_0)}|h^{(k)}|^{{3\over 2}}\,\mathrm{d}x\mathrm{d}\tau.
\end{align*}
Now we choose $r_1$ small enough, such that
\[C(r_0)^{-3} r_1\int_{Q_{r_1}(x_0)}|h^{(k)}|^{{3\over 2}}\,\mathrm{d}x\mathrm{d}\tau<\varepsilon_0/2.\]
This together with \eqref{eq.5} and  Proposition \ref{regularity} enables us to conclude that
$(w^{(k)},\bar{P}^{(k)})$ is regular in $Q_{r_1}(z_0)$, which is a contradiction. So, we complete the proof of Proposition~\ref{pro.stability}.
\end{proof}
\subsection{The short-time behaviour of the kinetic energy }
In this subsection, we will give an useful observation for  energy solutions to equations~\eqref{W-2} where $v$ is the mild solution stated in the beginning of Section \ref{sec.4}. Here, we adapt the notation:
$$Y^s_{p,q}(T)\triangleq \widetilde{L}^{\infty}([0,T];\,F\dot{B}^{s}_{p,q}(\RR^3))\cap \widetilde{L}^1([0,T];\,F\dot{B}^{2+s}_{p,q}(\RR^3)),$$
and
\[\hat{K}^{s}_p(T)\triangleq\Big\{f(t)\in \mathcal{S}'(\RR^d)\big|\,\lim_{t\to 0+}t^{-\frac s2}\|f(t)\|_{FL^{p}}=0,\,\|f\|_{\hat{K}^{s}_p(T)}\triangleq
\sup_{0\leq t\leq T}t^{-\frac s2}\|f(t)\|_{FL^{p}}<\infty\Big\}.\]
\begin{proposition}\label{pro.sma}
Assume that $\theta,\,s,\,s_p,\,p,\,\tilde{p},\,q,\,\tilde{q}$ satisfy the relation \eqref{RC}. Let $w$ be a local energy solution to equations \eqref{W-2} associated to initial data $w_0\in L^2(\RR^3)\cap(F\dot{B}^{s_p}_{p,q}(\RR^3)+F\dot{B}^{s}_{\tilde{p},\tilde{q}}(\RR^3))$. Then there exists a $\gamma\in(0,{s-s_{\tilde{p}}\over 2\theta})$ depending on $\|w_0\|_{L^2\cap (F\dot{B}^{s_p}_{p,q}+F\dot{B}^{s}_{\tilde{p},\tilde{q}})}$ such that for any $0<t\ll 1$,
\begin{equation}\label{W}
\|w(t)-e^{\nu t\Delta}w_0\|_{L^2} \leq C\big(\|w_0\|_{L^2\cap (F\dot{B}^{s_p}_{p,q}+F\dot{B}^{s}_{\tilde{p},\tilde{q})}(\RR^3)},
\|v\|_{Y^s_{\tilde{p},\tilde{q}}(T)}\big)\left(t^{\frac{s-s_{\tilde{p}}-\gamma \theta}{ 2}}+t^{\gamma(1-\theta)}\right),
\end{equation}

\end{proposition}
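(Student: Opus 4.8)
The plan is to run a Calder\'on-type splitting of the data adapted to Lemma \ref{lem.decom}, solve a perturbed Navier--Stokes system for the ``regular'' piece, control the genuinely nonlinear part of the ``$L^2$'' piece by an energy estimate, and finally optimize over the splitting parameter. Write $M:=\|w_0\|_{L^2\cap(F\dot B^{s_p}_{p,q}+F\dot B^{s}_{\tilde p,\tilde q})}$. For every integer $j\ge 1$, to be fixed later as a function of $t$, Lemma \ref{lem.decom}, applied to the $F\dot B^{s_p}_{p,q}$-component of $w_0$ (its $F\dot B^{s}_{\tilde p,\tilde q}$-component being absorbed into $g_0$), yields divergence-free $g_0^j,h_0^j$ with $w_0=g_0^j+h_0^j$,
\[
\|g_0^j\|_{F\dot B^{s}_{\tilde p,\tilde q}}\le C2^{-j\theta}M,\qquad \|h_0^j\|_{L^2}\le C2^{j(1-\theta)}M.
\]

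\textbf{The regular part.} Since $2^{-j\theta}M$ is small for $j$ large, I would solve
\[
\partial_t\psi-\nu\Delta\psi+\psi\cdot\nabla\psi+v\cdot\nabla\psi+\psi\cdot\nabla v+\nabla\pi=0,\qquad\Div\psi=0,\qquad\psi(0)=g_0^j,
\]
by the fixed-point Lemma \ref{fixed}, treating $\psi\mapsto\int_0^t e^{\nu(t-\tau)\Delta}\mathbb P(v\cdot\nabla\psi+\psi\cdot\nabla v)\,\mathrm{d}\tau$ as the linear operator $L$: by the bilinear estimates obtained in the proof of Theorem \ref{well-sub} its norm on $Y^s_{\tilde p,\tilde q}(T_1)$ is $\lesssim T_1^{(s-s_{\tilde p})/2}\|v\|_{Y^s_{\tilde p,\tilde q}(T_1)}$, hence $<1$ for $T_1$ small, while $\|e^{\nu t\Delta}g_0^j\|_{Y^s_{\tilde p,\tilde q}(T_1)}\lesssim 2^{-j\theta}M$. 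This produces $\psi$ on a fixed interval $[0,T_1]$ with $\|\psi\|_{Y^s_{\tilde p,\tilde q}(T_1)}\le C2^{-j\theta}M$, which, by Proposition \ref{pro.ana} and Remark \ref{rem.ana} transplanted to this perturbed system, also enjoys the short-time decay $\sup_{0<t\le T_1}t^{\frac{\alpha}{2}-\frac{s}{2}+\frac{3}{2\eta}-\frac{3}{2\tilde p}}\|\nabla^\alpha\psi(t)\|_{FL^\eta}\le C2^{-j\theta}M$ for $(\alpha,\eta)\in\{0,1\}\times[1,\tilde p]$. Then $z:=w-\psi$ is a weak energy solution of \eqref{W-2} with drift $v+\psi$ in place of $v$ and data $h_0^j\in L^2$ (no extra forcing appears), and by Hausdorff--Young together with the decay of $v$ and $\psi$ the drift $v+\psi$ lies in some $L^{p_1}_tL^{q_1}_x$ with $2/p_1+3/q_1<1$, so $z$ obeys the global energy inequality \eqref{glo.W} with $v+\psi$.

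\textbf{The two nonlinear remainders and optimization.} Set $H(t):=e^{\nu t\Delta}h_0^j$, $A(t):=\psi(t)-e^{\nu t\Delta}g_0^j$, $R(t):=z(t)-H(t)$, so $w(t)-e^{\nu t\Delta}w_0=A(t)+R(t)$ and $R(0)=0$. For $A(t)=-\int_0^t e^{\nu(t-\tau)\Delta}\mathbb P\,\Div[\psi\otimes\psi+v\otimes\psi+\psi\otimes v]\,\mathrm{d}\tau$ I would use that the products gain Fourier-side integrability (Young's convolution inequality), so that after the smoothing of $e^{\nu(t-\tau)\Delta}\mathbb P\Div$ and the decay of Step~2 one gets $\|A(t)\|_{L^2}\le C(M,\|v\|_{Y^s_{\tilde p,\tilde q}(T)})\,t^{\sigma}2^{-j\theta}$ for a suitable $\sigma>0$. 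For $R$ I would run the energy identity: testing $\partial_t R-\nu\Delta R+\mathbb P[z\cdot\nabla z+(v+\psi)\cdot\nabla z+z\cdot\nabla(v+\psi)]=0$ against $R$, substituting $z=R+H$, discarding the divergence-free null terms, absorbing $\nu\|\nabla R\|_{L^2}^2$ by Gagliardo--Nirenberg, and applying Gr\"onwall; the surviving source terms are products of $R,H,v,\psi$ and their gradients. The interactions carrying $\psi$ come with the small factor $2^{-j\theta}M$ and a positive power of $t$, while those generated by $H$ come with $\|h_0^j\|_{L^2}^2\le C2^{2j(1-\theta)}M^2$ and again a positive power of $t$, so $\|R(t)\|_{L^2}\le C(M,\|v\|_{Y^s_{\tilde p,\tilde q}(T)})\big(t^{\sigma'}2^{-j\theta}+t^{\kappa}2^{j(1-\theta)}\big)$ for some $\sigma',\kappa>0$. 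Adding the two bounds and choosing $j=j(t)$ with $2^{j}\sim t^{-\gamma}$ for a sufficiently small $\gamma\in(0,\frac{s-s_{\tilde p}}{2\theta})$ — small enough, depending on $M$, that $\|g_0^{j(t)}\|_{F\dot B^s_{\tilde p,\tilde q}}<\eta_0$ for $t\ll1$ and that $\kappa-\gamma(1-\theta)\ge\gamma(1-\theta)$ — balances the terms and yields $\|w(t)-e^{\nu t\Delta}w_0\|_{L^2}\le C(M,\|v\|_{Y^s_{\tilde p,\tilde q}(T)})\big(t^{(s-s_{\tilde p}-\gamma\theta)/2}+t^{\gamma(1-\theta)}\big)$, which is \eqref{W}.

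\textbf{Main obstacle.} The delicate step is the energy estimate for $R$. The drift $v$ is only subcritical, so $v(t)$ and $\nabla v(t)$ are singular as $t\to0$, controlled merely by negative powers of $t$ through Remark \ref{rem.ana}; and the interaction of $R$ with $H=e^{\nu t\Delta}h_0^j$, whose $L^2$-norm is \emph{large} (of order $2^{j(1-\theta)}M$), must be arranged so that all time-singularities remain integrable and no gain is destroyed. One must track the exact $j$- and $t$-dependence of every term so that, after the substitution $2^j\sim t^{-\gamma}$, both exponents in \eqref{W} stay positive; this is what forces the smallness constraint on $\gamma$ and its dependence on $M$. Everything else — the fixed-point construction of $\psi$, the passage to $L^2$ of the bilinear part via Young's inequality on the Fourier side, and the Gagliardo--Nirenberg absorptions — is routine given the estimates already established in Sections \ref{sec.2}--\ref{sec.3}.
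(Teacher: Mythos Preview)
Your architecture --- Calder\'on split, mild solution for the subcritical piece $\psi$, energy control of the $L^2$ piece $z$, then optimize in $j$ --- is the paper's. But you run the splitting in the wrong direction, and this creates a gap you cannot close. You send $j\to+\infty$ (taking $2^j\sim t^{-\gamma}$), which makes $\|g_0^j\|_{F\dot B^s_{\tilde p,\tilde q}}$ small but $\|h_0^j\|_{L^2}\lesssim 2^{j(1-\theta)}M\sim t^{-\gamma(1-\theta)}M$ \emph{large}; your conclusion then rests on producing a $\kappa>0$ with $\|R(t)\|_{L^2}\lesssim t^{\kappa}\|h_0^j\|_{L^2}$. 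No such $\kappa$ exists. The data $h_0^j$ lies only in $L^2$, strictly below the critical level, and the self-interaction $H\otimes H$ with $H=e^{\nu t\Delta}h_0^j$ is scale-invariant in this estimate: any Duhamel or energy bookkeeping of its contribution to $R$ in $L^2$ returns at best an $O(1)$ multiple of $\|h_0^j\|_{L^2}$ (indeed $\int_0^t\|H(\tau)\|_{L^4}^4\,\mathrm d\tau\sim\|h_0^j\|_{L^2}^4\int_0^t\tau^{-3/2}\,\mathrm d\tau$ is not even finite), and the rescaling $h_0\mapsto\lambda h_0(\lambda\,\cdot)$ shows that a bound linear in $\|h_0\|_{L^2}$ with a prefactor vanishing as $t\to0$ is impossible.

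The paper takes the opposite orientation: $2^j=t^{\gamma}$, so that $\|h_0\|_{L^2}\lesssim t^{\gamma(1-\theta)}M$ is \emph{small} while $\|g_0\|_{F\dot B^s_{\tilde p,\tilde q}}\sim t^{-\gamma\theta}M$ is large. Largeness in the \emph{subcritical} norm is harmless, because the lifespan of $\psi$ scales like $\|g_0\|_{F\dot B^s_{\tilde p,\tilde q}}^{-2/(s-s_{\tilde p})}$ and the constraint $\gamma<(s-s_{\tilde p})/(2\theta)$ is exactly what guarantees $t<T_1$. With this orientation no short-time gain on the $L^2$ side is needed at all: the paper simply bounds $\|z(t)\|_{L^2}$ itself (not $\|z-H\|_{L^2}$) by a weak--strong energy inequality and Gr\"onwall, obtaining $\|z(t)\|_{L^2}\le C\|h_0\|_{L^2}\le C\,t^{\gamma(1-\theta)}M$, and then uses the crude triangle inequality
\[
\|w(t)-e^{\nu t\Delta}w_0\|_{L^2}\le\|\psi(t)-e^{\nu t\Delta}g_0\|_{L^2}+\|z(t)\|_{L^2}+\|e^{\nu t\Delta}h_0\|_{L^2}.
\]
The last two terms give $t^{\gamma(1-\theta)}$ directly, and the first, via Duhamel with $\|\psi\|_{\hat K^s_{\tilde p}}\lesssim t^{-\gamma\theta}M$, produces the remaining power of $t$. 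A secondary point: even after reorienting, ``testing the $R$-equation against $R$'' is not legitimate, since $z=w-\psi$ inherits only an energy \emph{inequality} from the weak solution $w$; this is why the paper works at the level of $z$ and derives its energy inequality by combining those of $w$ and $\psi$ through a mollified cross-term identity.
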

\begin{proof}
Firstly, we split $w_0$ into $f_0+g_0$ satisfying $f_0\in F\dot{B}^{s}_{\tilde{p},\tilde{q}}(\RR^3)$ and $g_0\in L^2(\RR^3)$. Let $$\alpha=\|w_0\|_{L^2\cap (F\dot{B}^{s_p}_{p,q}+F\dot{B}^{s}_{\tilde{p},\tilde{q}})}.$$
From the definition of $F\dot{B}^{s_p}_{p,q}(\RR^3)+F\dot{B}^{s}_{\tilde{p},\tilde{q}}(\RR^3)$, we can choose a decomposition of $w_0$: $w_0=\tilde{f}_0+\tilde{g}^1_0$ such that
\[\|\tilde{f}_0\|_{F\dot{B}^{s_p}_{p,q}}+\|\tilde{g}^1_0\|_{F\dot{B}^{s}_{\tilde{p},\tilde{q}}}\leq 2\alpha.\]
Since $\Div w_0=0$, we have $$w_0=Pw_0=\mathbb{P}\tilde{f}_0+\mathbb{P}\tilde{g}^1_0\triangleq f_0+g^1_0.$$By Calder\'{o}n-Zygmund estimates, we readily get
\[\|f_0\|_{F\dot{B}^{s_p}_{p,q}}+\|g^1_0\|_{F\dot{B}^{s}_{\tilde{p},\tilde{q}}}\leq C\alpha.\]
Then, by Lemma \ref{lem.decom}, for any $j\in \ZZ$, $f_0$ can be decomposed as $f_0=g^2_0+h_0$ such that $\Div g^2_0=\Div h_0=0$ and
\[\|g^2_0\|_{F\dot{B}^{s}_{\tilde{p},\tilde{q}}}\leq C2^{-j\theta}\|f_0\|_{F\dot{B}^{s_p}_{p,q}}\leq C2^{-j\theta}\alpha,\quad \|h_0\|_{L^2}\leq C2^{j(1-\theta)}\|f_0\|_{F\dot{B}^{s_p}_{p,q}}\leq C2^{j(1-\theta)}\alpha.\]
where $\theta,\,p,\,\tilde{p},\,q,\,\tilde{q},\,s,\,s_p$ satisfy \eqref{RC}.

Next, we consider the following problem:
\begin{equation}\label{Eq.g}
\partial_t g-\nu\Delta g+g\cdot\nabla g+v\cdot\nabla g+g\cdot\nabla v+\nabla \bar{P}=0,\quad \Div g=0,
\end{equation}
which is supplemented with initial condition $g(x,0)=g_0(x)$.

By Duhamel formula, one writes
\begin{align*}
g=&e^{\nu t\Delta}g_0+\int^t_0e^{-\nu(t-s)\Delta}\mathbb{P}(g\cdot\nabla v+v\cdot\nabla g)\,\mathrm{d}\tau+\int^t_0e^{-\nu(t-s)\Delta}\mathbb{P}(g\cdot\nabla g)\,\mathrm{d}\tau\\
\triangleq& G+L(g)+B(g,g).
\end{align*}
Following the proof of Theorem \ref{well-sub}, we have that
\[\|G\|_{Y^s_{\tilde{p},\tilde{q}}(T_1)}\leq C(\nu)\|g_0\|_{F\dot{B}^s_{p,q}},\quad \|L(g)\|_{Y^s_{\tilde{p},\tilde{q}}(T_1)}\leq C(\nu)T_1^{\frac{s-s_{\tilde{p}}}2}\|g\|_{Y^s_{\tilde{p},\tilde{q}}(T_1)}\|v\|_{Y^s_{\tilde{p},\tilde{q}}(T_1)},\]
and
\[\|B(g,\bar{g})\|_{Y^s_{\tilde{p},\tilde{q}}(T_1)}
\leq C(\nu)T_1^{\frac{s-s_{\tilde{p}}}2}\|g\|_{Y^s_{\tilde{p},\tilde{q}}(T_1)}\|\bar{g}\|_{Y_{Y^s_{\tilde{p},\tilde{q}}(T_1)}}.\]
Thus, by Lemma \ref{fixed}, there exists a mild solution $g$ to problem \eqref{Eq.g} on $[0,T_1]$ corresponding to initial data $g_0$, satisfying
\[g\in C_b\big([0,T_1];\,F\dot{B}^s_{\tilde{p},\tilde{q}}(\RR^3)\big)\cap Y^s_{\tilde{p},\tilde{q}}(T_1).\]
Furthermore, $T_1$ and $g$ satisfy
\[T_1\leq \min\Big\{1,T, \big(C(\nu)(\|v\|_{Y^s_{\tilde{p},\tilde{q}}(T)}
+2^{-j\theta}\alpha)\big)^{-\frac{2}{s-s_{\tilde{p}}}}\Big\}\]
and
\begin{equation}\label{est.g}
\|g\|_{Y_{T_1}}\leq\frac{C(\nu)\|g_0\|_{F\dot{B}^s_{\tilde{p},\tilde{q}}}}{ 1-C(\nu)T^{(s-s_p)/2}_1\|v\|_{Y^s_{\tilde{p},\tilde{q}}(T)}}\leq \frac{C(\nu) 2^{-j\theta}\alpha}{1-C(\nu)T^{(s-s_p)/2}_1\|v\|_{Y^s_{\tilde{p},\tilde{q}}(T)}}.
\end{equation}
In the same way as in the proof of Proposition \ref{pro.ana}, by Lemma \ref{regul}, we have
\begin{align*}
&\|te^{\nu t\Delta} g_0\|_{\widetilde{L}^{\infty}([0,T_1];\,F\dot{B}^{s+2}_{\tilde{p},\tilde{q}})}\leq C(\nu) \|g_0\|_{F\dot{B}^{s}_{\tilde{p},\tilde{q}}}<\infty,
\end{align*}
\begin{align*}
\|tL(g)\|_{\widetilde{L}^{\infty}([0,T_1];\,F\dot{B}^{s+2}_{\tilde{p},\tilde{q}})}
\leq& C(\nu) T_1^{\frac{s-s_{\tilde{p}}}2}\Big(\|tg\|_{\widetilde{L}^{\infty}([0,T_1];\,F\dot{B}^{s+2}_{\tilde{p},\tilde{q}})}
+\|g\|_{Y^s_{\tilde{p},\tilde{q}}(T_1)}\Big)\|v\|_{Y^s_{\tilde{p},\tilde{q}}(T_1)}
\end{align*}
and
\begin{align*}
&\max\left\{ \|tB(g,\bar{g})\|_{\widetilde{L}^{\infty}([0,T_1];\,F\dot{B}^{s+2}_{\tilde{p},\tilde{q}})},\,
\|tB(\bar{g},g)\|_{\widetilde{L}^{\infty}([0,T_1];\,F\dot{B}^{s+2}_{\tilde{p},\tilde{q}})}\right\}\\
\leq& C(\nu) T_1^{\frac{s-s_{\tilde{p}}}2}\Big(\|tg\|_{\widetilde{L}^{\infty}([0,T_1];\,F\dot{B}^{s+2}_{\tilde{p},\tilde{q}})}
+\|g\|_{Y^s_{\tilde{p},\tilde{q}}(T_1)}\Big)\|\bar{g}\|_{Y^s_{\tilde{p},\tilde{q}}(T_1)}.
\end{align*}
Employing Lemma \ref{regul}, we get $$\|tg(t)\|_{\widetilde{L}^{\infty}([0,T_1];F\dot{B}^s_{\tilde{p},\tilde{q}})}\leq \frac{C (\nu)2^{-j\theta}\alpha}{1-C(\nu)T^{(s-s_p)/2}_1\|v\|_{Y^s_{\tilde{p},\tilde{q}}(T_1)}}.$$
This, together with Remark \ref{rem.ana}, we immediately obtain that
\begin{equation}\label{est.g'}
\|g\|_{\hat{K}^s_{\tilde{p}}(T_1)}\leq \frac{C (\nu)2^{-j\theta}\alpha}{1-C(\nu)T^{(s-s_p)/2}_1\|v\|_{Y^s_{\tilde{p},\tilde{q}}(T_1)}}.
\end{equation}
On the other hand, due to $g_0=w_0-h_0\in L^2(\RR^3)$, we have
$$\|e^{\nu t\Delta}g_0\|_{L^2}\in L^2(\RR^3).$$
Moreover, by H\"{o}lder's and Young's inequalities, we have the following estimate (we omit the routine calculation):
\[\|L(g)\|_{L^{\infty}((0,T_1);L^2)}\leq C(\nu)T^{\frac{s-s_{\tilde{p}}}{2}}\|g\|_{L^{\infty}((0,T_1);L^2)}\|v\|_{\hat{K}^s_{\tilde{p}}(T)}\]
and
\[\max\left\{\|B(\bar{g},g)\|_{L^{\infty}((0,T_1);L^2)},\,\|B(g,\bar{g})\|_{L^{\infty}((0,T_0);L^2)}\right\}
\leq C(\nu)T^{\frac{s-s_{\tilde{p}}}{2}}\|g\|_{L^{\infty}((0,T_1);L^2)}\|\bar{g}\|_{\hat{K}^s_{\tilde{p}}(T_1)}.\]
Hence, by Lemma \ref{regul}, we obtain that $g\in C([0,T_1];L^2(\RR^3))$.

Similarly, we can deduce that $g\in L^2((0,T_1);\dot{H}^1)$. So, we see that $g$ satisfies the global energy inequality~\eqref{glo.W} on $(0,T_1)$. We have by  H\"{o}lder's, Hausdroff-Young's and Young's inequalities that  for any $0<t<T_1$
\begin{align*}
&\int_{\RR^3\times\{t\}}|g|^2\,\mathrm{d}x+2\int^t_0\int_{\RR^3}\nu|\nabla g|^2\,\mathrm{d}x\mathrm{d}\tau\\
\leq& \|g_0\|^2_{L^2}+C\int^t_0\|g\|_{L^{{2\tilde{p}\over 2-\tilde{p}}}}\|\nabla g\|_{L^2}\|v\|_{L^{{\tilde{p}\over \tilde{p}-1}}}\,\mathrm{d}\tau\\
 \leq& \|g_0\|^2_{L^2}+\int^t_0\|g\|^{{3-2\tilde{p}\over \tilde{p}}}_{L^2}\|\nabla g\|^{4-{3\over \tilde{p}}}_{L^2}\|v\|_{FL^{\tilde{p}}}\,\mathrm{d}\tau\\
 \leq& \|g_0\|^2_{L^2}+\int^t_0\int_{\RR^3}\nu |\nabla g|^2\,\mathrm{d}x\mathrm{d}\tau
+C(\nu)\int^t_0\|g\|^2_{L^2}\|v\|^{{2\tilde{p}\over 3-2\tilde{p}}}_{(FL^{\tilde{p}})^2}\,\mathrm{d}\tau.
\end{align*}
In the third line, we have used  Sobolev embedding \begin{equation}\label{sobolev}
\dot{H}^1(\RR^3)\hookrightarrow L^6(\RR^3)
\end{equation}
and interpolation inequality
\begin{equation}\label{intero}
\|g\|_{L^{2\tilde{p}\over 2-\tilde{p}}}\leq \|g\|^{{3-2\tilde{p}\over \tilde{p}}}_{L^2}\|g\|^{3-{3\over \tilde{p}}}_{L^6}.
\end{equation}
Finally, by Gronwall's inequality, we get for any $0<t\leq T_1$
\begin{equation*}
\sup_{0\leq \tau\leq t}\|g\|^2_{L^2}+\nu\int^{t}_0\|\nabla g\|^2_{L^2}\,\mathrm{d}\tau\leq \|g_0\|^2_{L^2}e^{C(\nu){t}^{1-{s\over s_{\tilde{p}}}} \|v\|_{\hat{K}^s_{\tilde{p}}(T)}^{2\tilde{p}\over 3-2\tilde{p}}}.
\end{equation*}
With this property and \eqref{est.g'} in hand, we have that for any $0<t<T_1$,
\begin{equation}\label{g}
\begin{split}
&\|g(t)-e^{\nu t\Delta}g_0\|_{L^2}\\
\leq& Ct^{{s-s_{\tilde{p}}\over 2}}\|g\|_{L^{\infty}((0,t);L^2)}\big(\|g\|_{\hat{K}^s_{\tilde{p}}(T_1)}+
\|v\|_{\hat{K}^s_{\tilde{p}}(T_1)}\big)\\
\leq&Ct^{{s-s_{\tilde{p}}\over 2}}\|g_0\|_{L^2}e^{C(\nu){t}^{1-{s\over s_{\tilde{p}}}}\|v\|_{\hat{K}^s_{\tilde{p}}(T)}^{2\tilde{p}\over 3-2\tilde{p}}}
\bigg(C\frac{2^{-j\theta}\alpha}{1-C(\nu)T^{(s-s_p)/2}_1 \|v\|_{Y^s_{\tilde{p},\tilde{q}}(T)}}+
\|v\|_{\hat{K}^s_{\tilde{p}}(T)}\bigg).
\end{split}
\end{equation}
Letting $h=w-g$, we have $h\in L^{\infty}((0,T_1);L^2)\cap L^2((0,T_1);\dot{H}^1)$ and $\displaystyle\lim_{t\to 0+}\|h(t)-h_0\|_{L^2}=0$.

Thanks to \eqref{est.g'}, we get $g\in L^{r}((0,T_2);L^{\tilde{p}'})$ with $\frac{2}{r}+\frac{3}{\tilde{p}'}=1$ and $\tilde{p}'$ is the conjugate index of $\tilde{p}$. Thus, using the usual mollification procedure used in Proposition \text{14.3} in \cite{LRbook}, we get the following equality:
\begin{align*}
&\int_{\RR^3} g(t,x)\cdot w(t,x)\,\mathrm{d}x+2 \nu\int^t_{0}\int_{\RR^3}\nabla g:\nabla w\,\mathrm{d}x\mathrm{d}\tau\\
=&-\int^t_{0}\int_{\RR^3}\left((h \cdot\nabla)w\cdot g+ (g\cdot\nabla)w\cdot v+(w\cdot\nabla)g\cdot v\right)\,\mathrm{d}x\mathrm{d}\tau+\int^t_0\int_{\RR^3}g_0\cdot w_0\,\mathrm{d}x.
\end{align*}
This, together with the global energy inequality \eqref{loc.W} for $w$ and $g$, gives that
\begin{align*}
\|h(t)\|^2_{L^2}+2\nu\int^t_0\|\nabla h\|^2_{L^2}\,\mathrm{d}\tau\leq\|h_0\|^2_{L^2}+2\int^t_0\int_{\RR^3}\big((h\cdot\nabla)h\cdot g+(h\cdot\nabla)h\cdot v\big)\,\mathrm{d}x\mathrm{d}\tau.
\end{align*}
By H\"{o}lder's and Young's inequality, \eqref{sobolev} and \eqref{intero}, we get
\[\int_{\RR^3\times\{t\}}|h|^2\,\mathrm{d}x+\int^t_0\int_{\RR^3}\nu|\nabla h|^2\,\mathrm{d}x\mathrm{d}\tau
\leq \|h_0\|^2_{L^2}+C(\nu)\int^t_0\|h\|^2_{L^2}\bigg(\|g\|^{{2\tilde{p}\over 3-2\tilde{p}}}_{FL^{\tilde{p}}}+\|v\|^{{2\tilde{p}\over 3-2\tilde{p}}}_{FL^{\tilde{p}}}\bigg)\,\mathrm{d}\tau.\]
Employing the Gronwall inequality to the above inequality, we obtain that
\begin{equation}\label{h}
\begin{split}
\|h(t)\|^2_{L^2}+\nu\int^{t}_0\|\nabla h\|^2_{L^2}\,\mathrm{d}s\leq& \|h_0\|^2_{L^2}e^{C(\nu)t^{1-{s\over s_{\tilde{p}}}}\big(\|g\|_{\hat{K}^s_{\tilde{p}}(T_1)}+
\|v\|_{\hat{K}^s_{\tilde{p}}(T)}\big)^{2\tilde{p}\over 3-2\tilde{p}}}\\
\leq &2^{2(1-\theta)j}e^{C(\nu)t^{1-{s\over s_{\tilde{p}}}}\big(
\|v\|_{\hat{K}^s_{\tilde{p}}(T_1)}+C(T_1,
\|v\|_{Y^s_{\tilde{p},\tilde{q}}(T)})2^{-j\theta}\alpha\big)^{2\tilde{p}\over 3-2\tilde{p}}}.
\end{split}
\end{equation}
Now, we choose $2^{j}=t^{\gamma}$ for some positive constant $\gamma$ and $0<t<T_1$, which implies that
\[0<\gamma<{s-s_{\tilde{p}}\over 2\theta}\quad\text{and}\quad 0<t<(C(\nu)\alpha)^{-{2\over s-s_{\tilde{p}}-2\theta\gamma}}.\]
According to \eqref{est.g}, \eqref{g} and\eqref{h}, for any $0<t<\min\big\{T_1,(C^{1/ 2}(\nu)\alpha)^{-{2\over s-s_{\tilde{p}}-2\theta\gamma}}\big\}$, we have
\begin{align*}
\|w(t)-e^{\nu\Delta t}w_0\|_{L^2}\leq& \|g(t)-e^{\nu\Delta t}g_0\|_{L^2}+\|h(t)\|_{L^2}+\|e^{\nu\Delta t}h_0\|_{L^2}\\
\leq&C(T_1,T,\alpha, \|v_0\|_{F\dot{B}^s_{\tilde{p},\tilde{q}}})\bigg(t^{\frac{s-s_{\tilde{p}}-\gamma\theta}{ 2}}+t^{\gamma(1-\theta)}\bigg).
\end{align*}
This completes the proof.
\end{proof}

\subsection{Weak-strong uniqueness}
This section is devoted to  the study of a new version of ``weak-strong" uniqueness of weak energy solution problem \eqref{W-2} established at the beginning of Section 4. Here, we still adopt the notation $Y^s_{p,q}(T)$ introduced in above subsection.
\begin{proposition}\label{pro.uni'}
Suppose $s,s_p,p,q,\tilde{p},\tilde{q}$ satisfy relation \eqref{RC}. Let $w$ and $\tilde{w}$ both are weak energy solutions to Eq. \eqref{W-2} with the same initial data $w_0\in L^2(\RR^3)\cap (F\dot{B}^{s_p}_{p,q}(\RR^3)+F\dot{B}^s_{\tilde{p},\tilde{q}}(\RR^3))$. Assume further that \begin{equation}\label{est.w}
w\in \hat{K}^{s_p}_{p}(T)+\hat{K}^s_{\tilde{p}}(T).
\end{equation}
Then $\tilde{w}\equiv w$ on $\RR^3\times \big(0,T)$.
\end{proposition}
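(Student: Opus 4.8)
The plan is to set $W=w-\tilde w$ and prove $W\equiv0$ by a weak--strong energy argument; the only genuine difficulty is a scaling-critical time weight coming from the $\hat K^{s_p}_p$-component of $w$. First I would record $W(0)=0$ and that subtracting the two copies of \eqref{W-2} gives
\[\partial_t W-\nu\Delta W-W\cdot\nabla W+W\cdot\nabla w+w\cdot\nabla W+v\cdot\nabla W+W\cdot\nabla v+\nabla Q=0,\qquad \Div W=0.\]
By \eqref{est.w} I may fix a decomposition $w=w_a+w_b$ with $w_a\in\hat K^{s_p}_p(T)$ and $w_b\in\hat K^s_{\tilde p}(T)$; since $p,\tilde p\in(1,\tfrac32)$, the Hausdorff--Young inequality gives $\|w_a(\tau)\|_{L^{p'}}\le\|w_a(\tau)\|_{FL^p}\le h(\tau)\,\tau^{s_p/2}$ with $h\in L^\infty(0,T)$ and $h(\tau)\to0$ as $\tau\to0^+$ (this is the $\lim$ condition built into $\hat K^{s_p}_p$), and $\|w_b(\tau)\|_{L^{\tilde p'}}\le\tau^{s/2}\|w_b\|_{\hat K^s_{\tilde p}(T)}$, where $\tfrac1p+\tfrac1{p'}=\tfrac1{\tilde p}+\tfrac1{\tilde p'}=1$.

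The first step is the energy inequality for $W$. On any $[\delta_1,t]$ the above bounds place $w$ in $L^r_tL^{p'}_x+L^{\rho}_tL^{\tilde p'}_x$ with $\tfrac2r+\tfrac3{p'}=\tfrac2\rho+\tfrac3{\tilde p'}=1$, so $w$ is an admissible test function for $\tilde w$; running the usual mollification procedure (as in Proposition~14.3 of \cite{LRbook}, already used in the proof of Proposition~\ref{pro.sma}) to obtain the cross-term identity for $\tfrac{d}{d\tau}\int w\cdot\tilde w$, combining it with the global energy inequality \eqref{glo.W} for $w$ and for $\tilde w$, using $\langle v\cdot\nabla w,\tilde w\rangle+\langle v\cdot\nabla\tilde w,w\rangle=0$, and cancelling $\langle w\cdot\nabla W,W\rangle=\langle v\cdot\nabla W,W\rangle=\langle W\cdot\nabla W,W\rangle=0$ by incompressibility, I expect (after integrations by parts) to arrive at
\[\|W(t)\|_{L^2}^2+2\nu\!\int_{\delta_1}^t\!\|\nabla W\|_{L^2}^2\,d\tau\le\|W(\delta_1)\|_{L^2}^2+2\!\int_{\delta_1}^t\!\big(|\langle W\cdot\nabla W,w\rangle|+|\langle W\cdot\nabla W,v\rangle|\big)\,d\tau,\]
and then send $\delta_1\to0^+$, using $\|W(\delta_1)\|_{L^2}\to0$ from (W3) for $w$ and $\tilde w$, to get the same inequality with $\int_0^t$.

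Next I would estimate the trilinear terms by $|\langle W\cdot\nabla W,f\rangle|\le\|\nabla W\|_{L^2}\|W\|_{L^{a}}\|f\|_{L^{b}}$, $\tfrac1a+\tfrac1b=\tfrac12$, taking $b=p'$ for $f=w_a$ and $b=\tilde p'$ for $f\in\{w_b,v\}$, interpolating $\|W\|_{L^a}\lesssim\|W\|_{L^2}^{1-\sigma}\|\nabla W\|_{L^2}^{\sigma}$ via $\dot H^1\hookrightarrow L^6$ (so $1-\sigma=-s_p$ when $b=p'$), and applying Young's inequality to absorb small multiples of $\|\nabla W\|_{L^2}^2$ on the left. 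This should leave
\[\|W(t)\|_{L^2}^2\le C\!\int_0^t\!\Big(\|w_a(\tau)\|_{FL^p}^{r}+\|w_b(\tau)\|_{FL^{\tilde p}}^{\rho}+\|v(\tau)\|_{FL^{\tilde p}}^{\rho}\Big)\|W(\tau)\|_{L^2}^2\,d\tau,\qquad r=\tfrac{2p}{3-2p},\ \rho=\tfrac{2\tilde p}{3-2\tilde p}.\]
By Proposition~\ref{pro.ana} and Remark~\ref{rem.ana}, $v\in\hat K^s_{\tilde p}(T)$; since \eqref{RC} forces $s-s_{\tilde p}=\tfrac{\theta}{2(1-\theta)}>0$ and $s<0$, we get $\|v(\tau)\|_{FL^{\tilde p}}^{\rho}\lesssim\tau^{s\rho/2}=\tau^{-s/s_{\tilde p}}$ with exponent in $(-1,0)$, and similarly for $w_b$; thus the $w_b$- and $v$-coefficients lie in $L^1(0,T)$. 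The $w_a$-coefficient is the critical one: because $\tfrac{rs_p}{2}=-1$ it equals $h(\tau)^{r}\tau^{-1}$, which is not integrable at $\tau=0$, so a bare Grönwall cannot be applied.

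The last step -- which I expect to be the main obstacle -- circumvents this. I would apply Proposition~\ref{pro.sma} to the two weak energy solutions $w$ and $\tilde w$ (which share the datum $w_0\in L^2\cap(F\dot B^{s_p}_{p,q}+F\dot B^s_{\tilde p,\tilde q})$) and subtract, obtaining $\|W(\tau)\|_{L^2}^2\le C\tau^{2\beta}$ for some $\beta>0$ and all small $\tau$; set $Y(t)=\sup_{0<\tau\le t}\tau^{-2\beta}\|W(\tau)\|_{L^2}^2<\infty$. Choosing $\delta>0$ small enough that $\sup_{(0,\delta)}h<\varepsilon$ and the $L^1(0,\delta)$-norm of the integrable part of the coefficient is $<\varepsilon$, the displayed inequality gives, for every $t<\delta$,
\[\|W(t)\|_{L^2}^2\le C\varepsilon\!\int_0^t\!\tau^{-1}\|W(\tau)\|_{L^2}^2\,d\tau+C\varepsilon\sup_{(0,t]}\|W\|_{L^2}^2\le C\varepsilon\big(\tfrac1{2\beta}+1\big)\,t^{2\beta}\,Y(t),\]
whence $Y(t)\le C\varepsilon(\tfrac1{2\beta}+1)\,Y(t)$; taking $\varepsilon$ small forces $Y\equiv0$ on $(0,\delta)$, i.e. $W\equiv0$ there. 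Finally $W(\delta)=0$ by weak $L^2$-continuity, and on $[\delta,T]$ one has $w_a\in L^r_tL^{p'}_x$ and $w_b,v\in L^{\rho}_tL^{\tilde p'}_x$ with $r,\rho<\infty$ (the time-weights now sit away from $0$, so $\tau^{-1},\tau^{-s/s_{\tilde p}}\in L^1(\delta,T)$), and Grönwall applied to the analogue of the above inequality starting from $\delta$ gives $W\equiv0$ on $[\delta,T]$; hence $\tilde w\equiv w$ on $\RR^3\times(0,T)$. The heart of the matter is that it is precisely the conjunction of the vanishing $h(\tau)\to0$ built into $\hat K^{s_p}_p$ and the short-time rate $\|W(\tau)\|_{L^2}\lesssim\tau^{\beta}$ furnished by Proposition~\ref{pro.sma} that closes the singular Grönwall, while the Serrin-type regularity of $w$ encoded in \eqref{est.w} is what legitimizes the energy identity for $W$ against the merely weak solution $\tilde w$.
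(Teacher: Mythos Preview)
Your argument is correct and follows essentially the same route as the paper: derive the energy inequality for the difference via the cross-term identity (legitimized by the Serrin-type regularity of $w$ away from $t=0$) combined with the two global energy inequalities, estimate the trilinear terms by $\dot H^1\hookrightarrow L^6$ interpolation and Young, use Proposition~\ref{pro.sma} to tame the critical $\tau^{-1}$ weight from the $\hat K^{s_p}_p$-component, conclude $W\equiv0$ on a short interval, and propagate by Gr\"onwall since $w\in L^{l_1}_tL^{p'}_x+L^{l_2}_tL^{\tilde p'}_x$ on $[\delta,T]$. Two minor differences in execution: the paper first passes $t_0\to0$ in the cross-term identity alone (using Proposition~\ref{pro.sma} to show the $I_1$-limit exists) and only then combines with (W5) from $t=0$, which sidesteps needing the energy inequality from an intermediate time $\delta_1$; and your final absorption via $Y(t)=\sup_{0<\tau\le t}\tau^{-2\beta}\|W(\tau)\|_{L^2}^2$ together with the vanishing $h(\tau)\to0$ in $\hat K^{s_p}_p$ is actually a cleaner closure than the paper's compressed display, which leans on Proposition~\ref{pro.sma} alone and is somewhat elliptical about how the $\tau^{-1}$ factor is absorbed against $\|\delta w\|_{L^\infty_tL^2_x}^2$.
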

\begin{remark}
Let us point out that the uniqueness result established  by Gallagher and Planchon~\cite{GP02} indicates  Proposition \ref{pro.uni'} when ${2\over q}+{3\over p'}\geq 1$. However, for ${2\over q}+{3\over p'}< 1$,  the argument in  \cite{GP02} doesn't seem to work. To overcome it, we need resort to the
regularity of such weak energy solution in short time which is established in Proposition \ref{pro.sma}.
\end{remark}
\begin{proof}[Proof of Proposition \ref{pro.uni'}]
Let $\delta w=\tilde{w}-w$. From Remark \ref{rem.ana}, we can rewrite $w=f+g$ satisfying
\[\|f\|_{\hat{K}^{s_p}_{p}(T)}<\infty,\quad \|g\|_{\hat{K}^s_{\tilde{p}}(T)}<\infty.\]
By Hausdroff-Young's inequality, we have $f\in L^{l_1}((t_0,T);L^{\frac{p}{p-1}})$, $\frac{2}{l_1}+\frac{3(p-1)}{p}=1$ for any $0<t_0<T$, $g\in L^{l_2}((0,T);L^{\frac{\tilde{p}}{\tilde{p}-1}})$ and $v\in L^{l_2}((0,T);L^{\frac{\tilde{p}}{\tilde{p}-1}})$ with $\frac{2}{l_2}+\frac{3(\tilde{p}-1)}{ \tilde{p}}=1$. Then, by a usual mollification procedure used in Proposition \text{14.3} in \cite{LRbook}, we have that for any $t_0\leq t\leq T$,
\begin{equation}\label{eq.diff}
\begin{split}
&\int_{\RR^3} w(t,x)\cdot \tilde{w}(t,x)\,\mathrm{d}x+2 \nu\int^t_{t_0}\int_{\RR^3}\nabla w:\nabla \tilde{w}\,\mathrm{d}x\mathrm{d}\tau\\
=&-\int^t_{t_0}\int_{\RR^3}(\delta w \cdot\nabla)\tilde{w}\cdot f\,\mathrm{d}x\mathrm{d}\tau+\int^t_{t_0}\int_{\RR^3}(\delta w\cdot\nabla)\tilde{w}\cdot g\,\mathrm{d}x\mathrm{d}\tau\\
&+\int^t_{t_0}\int_{\RR^3} \big((w\cdot\nabla)\tilde{w}\cdot v+(\tilde{w}\cdot\nabla)w\cdot v\big)\,\mathrm{d}x\mathrm{d}\tau+\int_{\RR^3}w(\delta,x)\cdot \tilde{w}(\delta,x)\,\mathrm{d}x\\
 \triangleq& I_1+I_2+I_3+I_4.
\end{split}
\end{equation}
Due to $g\in L^{l_2}((0,T);L^{\frac{\tilde{p}}{\tilde{p}-1}})$ and $v\in L^{l_2}((0,T);\,L^{\frac{\tilde{p}}{\tilde{p}-1}})$, $\frac{2}{ l_2}+\frac{3(\tilde{p}-1)}{\tilde{p}}=1$, it is obvious that the limits of $I_2$ and $I_3$ exist as $t_0\to 0$.

By Sobolev embedding \eqref{sobolev} and interpolation inequality \eqref{intero}, we have
\begin{align*}
I_1\leq \int^t_{t_0}\left(\|\nabla \delta w\|^2_{L^2}+\|\nabla \tilde{w}\|^2_{L^2}\right)\,\mathrm{d}\tau+C\|f\|_{\hat{K}^s_{p}(T)}^{{2p\over 3-2p}}\int^t_{\delta}\tau^{-1}\|\delta w\|^2_{L^2}\,\mathrm{d}\tau,
\end{align*}
According to Proposition \ref{pro.sma}, we get that for any $0<\delta\ll1$, there exists a $0<\gamma<{{s-s_{\tilde{p}}\over 2\theta}}$, such that for any $0<t<\delta$, we have that
\begin{align*}
\|\delta w(t)\|^2_{L^2}\leq& C\left(\|w(t)-e^{\nu\Delta t}w_0\|^2_{L^2}+\|\tilde{w}(t)-e^{\nu\Delta t}w_0\|^2_{L^2}\right)\\
\leq& C(\|w_0\|_{L^2\cap(F\dot{B}^{s_p}_{p,q}+F\dot{B}^{s}_{\tilde{p},\tilde{q}})},
\|v\|_{Y^s_{\tilde{p},\tilde{q}}(T)})\bigg(t^{\gamma(1-\theta)}
+t^{\frac{s-s_{\tilde{p}}-\gamma \theta}{2}}\bigg).
\end{align*}
Hence,
\begin{align*}
\int^{t_0}_0\tau^{-1}\|\delta w\|^2_{L^2}\,\mathrm{d}\tau\leq & C\big(\|w_0\|_{L^2\cap(F\dot{B}^{s_p}_{p,q}+F\dot{B}^{s}_{\tilde{p},\tilde{q}})},\|v\|_{Y^s_{\tilde{p},\tilde{q}}(T)}\big) \\&\times\int^{t_0}_0\left(\tau^{-1+\gamma(1-\theta)}+\tau^{-1+\frac{s-s_{\tilde{p}}-\gamma \theta}{ 2}}\right)\,\mathrm{d}\tau<\infty,
\end{align*}
which implies the existence of the limit of $I_1$ as $t_0\to 0+$.

Taking $\delta\to 0$ in \eqref{eq.diff}, we  readily get that for any $0\leq t\leq T$,
\begin{equation*}
\begin{split}
&\int_{\RR^3} w(t,x)\cdot \tilde{w}(t,x)\,\mathrm{d}x+2\nu\int^t_0\int_{\RR^3}\nabla w:\nabla \tilde{w}\,\mathrm{d}x\mathrm{d}\tau\\
=&2\|w_0\|^2_{L^2}-\int^t_0\int_{\RR^3}\big((\delta w \cdot\nabla)\tilde{w}\cdot (f+g)+(w\cdot\nabla)\tilde{w}\cdot v+(\tilde{w}\cdot\nabla)w\cdot v\big)\,\mathrm{d}x\mathrm{d}\tau.
\end{split}
\end{equation*}
Since $\tilde{w}$ and $w$ both satisfy the global energy inequality \eqref{glo.W}, from the above equality, we deduce that for any $0\leq t\leq T$,
\begin{align*}
&\|\delta w(t)\|^2_{L^2}+2\nu\int^t_0\|\nabla \delta w\|^2_{L^2}\,\mathrm{d}\tau\\
\leq& 2\int^t_0\int_{\RR^3}(\delta w\cdot\nabla)\delta w\cdot (f+g)\,\mathrm{d}x\mathrm{d}\tau+2\int^t_0\int_{\RR^3} (\delta w\cdot\nabla)\delta w\cdot v\,\mathrm{d}x\mathrm{d}\tau.
\end{align*}
By \eqref{sobolev} and \eqref{intero}, we have for any $0\leq t< \delta$,
\begin{align*}
&\|\delta w\|^2_{L^{\infty}((0,t);\,L^2)}+\nu\|\nabla \delta w\|^2_{L^2(\RR^3\times (0,t))}\\
\lesssim_{\nu}&\Big (\|f\|_{\hat{K}^{s_p}_{p}(T)}^{\frac{2p}{ 3-2p}}+\big(\|g\|_{\hat{K}^{s}_{\tilde{p}}(T)}+\|v\|_{\hat{K}^{s}_{\tilde{p}}(T)}\big)^{\frac{2\tilde{p}}{ 3-2\tilde{p}}}\Big)\|\delta w\|^2_{L^{\infty}((0,t);\,L^2)}\\
&\times\int^t_{0}\big(\tau^{-1+2\gamma(1-\theta)}+\tau^{-1+s-s_{\tilde{p}}}+\tau^{-{s\over s_{\tilde{p}}}}\big)\,\mathrm{d}\tau\\
\lesssim_{\nu}&\Big (\|f\|_{\hat{K}^{s_p}_{p}(T)}^{\frac{2p}{ 3-2p}}+\big(\|g\|_{\hat{K}^{s}_{\tilde{p}}(T)}+\|v\|_{\hat{K}^{s}_{\tilde{p}}(T)}\big)^{\frac{2\tilde{p}}{ 3-2\tilde{p}}}\Big)\|\delta w\|^2_{L^{\infty}((0,t);\,L^2)}t^{\beta},
\end{align*}
where $\beta=\min\big\{2\gamma(1-\theta),s-s_{\tilde{p}}-\gamma \theta,1-{s\over s_{\tilde{p}}}\big\}$.

From it, we can choose a $0<t_1<\delta$ small enough such that $\delta w=0$ on $(0,t_1)\times\RR^3$.  Thanks to $w\in L^{l_1}([t_1,T];\,L^{\frac{p}{ p-1}})+L^{l_2}([t_1,T];\,L^{\frac{\tilde{p}}{\tilde{p}-1}})$, by the continuity argument, we eventually obtain  $w\equiv\tilde{w}$ on $\big(0,T\big)\times\RR^3$.
\end{proof}
\begin{remark}\label{rem.uloc}
	Let us point out that all the above results in this section can be generalized to the $L^2_{\rm uloc}(\RR^3)$ framework by borrowing the idea used in \cite{LRbook}. Here and what in follows,
\[L^2_{\rm uloc}(\RR^3)=\left\{f\in\mathcal{D}'(\RR^3)\big|\,\,\sup_{x\in\RR^3}\|f\|_{L^2(B_1(x))}<\infty \,\,\text{and}\,\,\lim_{|x|\to\infty}\|f\|_{L^2(B_1(x))}=0\right\}.\]
\end{remark}

\section{Proof of Theorem \ref{Thm}}\label{sec.5}
\setcounter{section}{5}\setcounter{equation}{0}
In this section,  we will give a complete proof of Theorem \ref{Thm} by using some results established in previous sections. We will split it into three cases to discuss. \medskip

\noindent\textbf{Case 1:} $1<p<{2/3}$ and $1\leq q<\infty$.

Since $u_0\in F\dot{B}^{s_p}_{p,q}(\RR^3)$, we know by Theorem \ref{well-cri} that there exists a unique local-in-time solution $u\in C([0,T^*);\,F\dot{B}^{s_p}_{p,q}(\RR^3))$ satisfying
\[ \|u\|_{\widetilde{L}^r_{\rm loc}\big([0,T^*);\,F\dot{B}^{\frac2r+s_p}_{p,q}(\RR^3)\big)}<\infty \quad\forall r\in [1,\infty].\]
Moreover, we have by Remark \ref{rem.ana} that
\begin{equation}\label{K}
u\in \hat{K}^{s_p}_p(T)\quad \text{for each}\,\,\,T\in(0,T^*).
\end{equation}
By Lemma \ref{lem.decom}, for each $j\in \ZZ$, there exists $C>0$ such that
\[u_0=v_0+w_0,\]
where $v_0$ and $w_0$ satisfy   $\Div v_0=\Div w_0=0,$
\[\|v_0\|_{F\dot{B}^{s}_{\tilde{p},\tilde{q}}}\leq C2^{-j\theta}\|u_0\|_{F\dot{B}^{s_p}_{p,q}}\quad\text{ and }\quad\|w_0\|_{L^2}\leq C2^{j(1-\theta)}\|u_0\|_{F\dot{B}^{s_p}_{p,q}} ,\]
where $\theta,\,s_p,\, s,\, p,\, q,\, \tilde{p},\,\tilde{q}$ satisfy the restriction condition \eqref{RC}.

By Theorem \ref{well-sub}, we know that the following system admits a unique local mild solution $v\in C([0,T'];\,F\dot{B}^{s }_{\tilde{p},\tilde{q}}(\RR^3))\cap\widetilde{L}^r \big([0,T'];\,F\dot{B}^{2+s }_{\tilde{p},\tilde{q}}(\RR^3)\big)$ with $r\in[1,\infty] $
\begin{equation*}
\left\{\begin{array}{ll}
\partial_t v-\nu\Delta v+v\cdot\nabla v+\nabla Q=0,\\
\Div v=0,\\
v(x,0)=v_0(x).
\end{array}\right.
\end{equation*}
 Since $T'$ only depends on $\|v_0\|_{F\dot{B}^s_{\tilde{p},\tilde{q}}}$, we can choose a suitable $j$ such that $T'>T^*.$  Using Proposition \ref{pro.ana} and Remark~\ref{rem.ana} again, one has
\begin{equation}\label{K'}
v\in \hat{K}^{s}_{\tilde{p}}(T') .
\end{equation}
Letting $w=u-v$, we get easily that $w\in\hat{K}^{s_p}_p(T)+\hat{K}^{s}_{\tilde{p}}(T)$  for each $T\in(0,T^*)$ satisfying the following integral equations:
\begin{equation}\label{eq.Integral}
\begin{split}
w=&e^{\nu t\Delta}w_0+\int^t_0 e^{\nu(t-\tau)\Delta}\mathbb{P}\big(w\cdot\nabla w\big)\,\mathrm{d}\tau+\int^t_0 e^{\nu(t-\tau)\Delta}\mathbb{P}\big(v\cdot\nabla w+w\cdot\nabla v\big)\,\mathrm{d}\tau\\
\triangleq& y_w+B(w,w)+L(w).
\end{split}
\end{equation}
Our task is now to show that
\begin{equation}
w\in L^{\infty}((0,T^*);L^2)\cap L^2((0,T^*);\dot{H}^1).
\end{equation}
Since $w_0\in L^2(\RR^3)\cap (F\dot{B}^{s_p}_{p,q}(\RR^3)+F\dot{B}^s_{\tilde{p},\tilde{q}}(\RR^s))$, we have by Banach fixed point theorem that there exists a local solution $\bar{w}\in X_{T_0}$, $T_0<T^*$ of  \eqref{eq.Integral}, where
$$X_{T_0}\triangleq \big(\hat{K}^{s_p}_p(T_0)+\hat{K}^{s}_{\tilde{p}}(T_0)\big)\cap E(T_0)$$
with $E(T_0)\triangleq C_{\rm b}([0,T_0);\,L^2)\cap L^2((0,T_0);\,\dot{H}^1)$.
 And $ \bar{ w}$ solves the following problem on $(0,T_0)$
\begin{equation}\label{W-2.sec5}
\left\{\begin{array}{ll}
\partial_t w-\nu\Delta w+w\cdot\nabla w+v\cdot\nabla w+w\cdot\nabla v+\nabla \bar{P}=0,\\
\Div w=0,\\
w(x,0)=w_0.
\end{array}\right.
\end{equation}
Indeed, by Lemma \ref{norm} and H\"older's inequality, we have
\[\|y_w\|_{X_{T_0}}\leq C(\|w_0\|_{F\dot{B}^{s_p}_{p,q}+F\dot{B}^{s}_{\tilde{p},\tilde{q}}}+\|w_0\|_{L^2}).\]
On the other hand, by a simple calculation and  estimate  \eqref{eq.ana-1} in Proposition \ref{pro.ana}, we have that the following estimates for any $w=w_1+w_2$ and $w'=w'_1+w'_2$
\begin{equation*}
\left\{\begin{aligned}
&\|B(w_1,w'_1)\|_{\hat{K}^{s_p}_p(T_0)}\lesssim_{\nu}\|w_1\|_{\hat{K}^{s_p}_p(T_0)}
\|w_1'\|_{\hat{K}^{s_p}_{p}(T_0)},\\
&\|B(w_1,w'_2)\|_{\hat{K}^{s_p}_p(T_0)}\lesssim_{\nu}T_0^{\frac{s-s_{\tilde{p}}}{2}}\|w_1\|_{\hat{K}^{s_p}_p(T_0)}
\|w_2'\|_{\hat{K}^{s}_{\tilde{p}}(T_0)},\\
&\|B(w_2,w'_1)\|_{\hat{K}^{s_p}_p(T_0)}\lesssim_{\nu}T_0^{\frac{s-s_{\tilde{p}}}{2}}
\|w_2\|_{\hat{K}^{s_p}_{p}(T_0)}\|w_1'\|_{\hat{K}^{s}_{\tilde{p}}(T_0)},\\
&\|B(w_2,w'_2)\|_{\hat{K}^{s}_{\tilde{p}}(T_0)}\lesssim_{\nu} T_0^{\frac{s-s_{\tilde{p}}}{2}}\|w_2\|_{\hat{K}^{s}_{\tilde{p}}(T_0)}
\|w_2'\|_{\hat{K}^{s}_{\tilde{p}}(T_0)},\\
&\|B(w,w)\|_{E(T_0)}\lesssim_{\nu} \Big(\|w_1\|_{\hat{K}^{s_p}_p(T_0)}+T_0^{\frac{s-s_{\tilde{p}}}{2}}\|w_2\|_{\hat{K}^s_{\tilde{p}}(T)}
\Big)\|w\|_{E(T_0)},
\end{aligned}\right.
\end{equation*}
and
\begin{equation*}
\left\{\begin{aligned}
&\|L(w_1)\|_{\hat{K}^{s_p}_p(T_0)}\lesssim_{\nu}T_0^{\frac{s-s_{\tilde{p}}}{2}}\|w_1\|_{\hat{K}^{s_p}_p(T_0)}
\|v\|_{\hat{K}^{s}_{\tilde{p}}(T_0)},\\
&\|L(w_2)\|_{\hat{K}^{s}_{\tilde{p}}(T_0)}\lesssim_{\nu}T_0^{\frac{s-s_{\tilde{p}}}{2}}\|w_2\|_{\hat{K}^{s}_{\tilde{p}}(T_0)}
\|v\|_{\hat{K}^{s}_{\tilde{p}}(T_0)},\\
&\|L(w)\|_{E(T_0)}
\lesssim_{\nu} T_0^{\frac{s-s_{\tilde{p}}}{2}}\big(\|v\|_{\hat{K}^s_{\tilde{p}}(T_0)}
+\|\nabla v\|_{\hat{K}^{s-1}_{\tilde{p}}(T_0)}\big)\|w\|_{E(T_0)}.
\end{aligned}\right.
\end{equation*}

Collecting all above estimates, by Lemma \ref{fixed} and the continuity argument, there exists a local solution $\bar{w}\in X_{T_0}$, $T_0<T^*$ of equations \eqref{eq.Integral}.

According to \eqref{W-2.sec5},  we get by performing $L^2$-energy estimate that
 \begin{equation*}
\sup_{0\leq t\leq T_0}\|\overline{w}(t)\|^2_{L^2}+\nu\int^{T_0}_0\|\nabla \overline{w}\|^2_{L^2}\,\mathrm{d}\tau\leq \|w_0\|^2_{L^2}e^{C(\nu){T^*}^{1-{s\over s_{\tilde{p}}}}\|v\|^{2\tilde{p}\over 3-2\tilde{p}}_{\hat{K}^s_{\tilde{p}}(T^*)}}.
\end{equation*}
Repeating the above same process in finite times, we finally obtain that equations \eqref{eq.Integral}  admit a local mild solution $\bar{w}\in C([0,T^*);\,L^2)$ satisfying
\[\bar{w}\in X_{T }\quad\text{for each}\quad T\in (0,T^*)\]
and uniform estimate
\begin{equation}\label{est.unif}
\sup_{0\leq t<T^*}\|\bar{w}(t)\|^2_{L^2}+\nu\int^{T^*}_0\|\nabla \bar{w}\|^2_{L^2}\,\mathrm{d}\tau\leq \|w_0\|^2_{L^2}e^{C(\nu){T^*}^{1-{s\over s_{\tilde{p}}}}\|v\|^{2\tilde{p}\over 3-2\tilde{p}}_{\hat{K}^s_{\tilde{p}}(T^*)}}.
\end{equation}
By uniqueness theorem, we know that $\bar{w}(x,t)\equiv w(x,t)$ for $t\in[0,T^*).$
Thus we can say $w$ satisfies the global energy inequality \eqref{glo.W} for any $0<t\leq T^*$.

We turn to deal with  the pressure $\bar{P}$.  From equations \eqref{W-2}, it follows  that
\[-\Delta \bar{P}=\Div\Div(w\otimes w+w\otimes v+v\otimes w) \quad\text{in}\quad \RR^3\times (0,T^*).\]
Therefore,
\[\bar{P}=\partial_i\partial_jK\ast(w_i w_j)-\partial_i\partial_jK\ast(w_i v_j+v_i w_j)\triangleq \bar{P}_1+\bar{P}_2\]
with $K(x)$ is the kernel function of $(-\Delta)^{-1}$.
Since $v\in \hat{K}^s_{\tilde{p}}(T^*)$ with $s>s_{\tilde{p}}$, by Hausdorff-Young's inequality, we have $v\in L^{l}((0,T^*);L^{\tilde{p}'})$ satisfying $\frac 2l+\frac3{p'}=1$ where $\tilde{p}'$ is the conjugate index of $\tilde{p}$. By Calder\'{o}n-Zygmund estimates, H\"{o}lder's inequality and \eqref{sobolev}, we have,
\begin{equation}\label{est.q1}
\|\bar{P}_1\|_{L^{{3\over 2}}(\RR^3\times (0,T^*))}\leq C\|w\|^2_{L^3(\RR^3\times(0,T^*))}\leq C(T^*)^{1/6}\|w\|_{L^{\infty}((0,T^*);\,L^2)}\|\nabla w\|_{L^2((0,T^*);\,L^2)}
\end{equation}
and
\begin{equation}\label{est.q2}
\begin{split}
\|\bar{P}_2\|_{L^2(\RR^3\times (0,T^*))}\leq C\|w\|^{{2/ r}}_{L^{\infty}((0,T^*);\,L^2)}\|w\|^{1-{2/r}}_{L^2((0,T^*);\,\dot{H}^1)}
\|v\|_{L^r((0,T^*);\,L^{{\tilde{p}'}})},
\end{split}
\end{equation}
which yields $\bar{P}\in L^{{3/2}}(\RR^3\times(0,T^*))+L^2(\RR^3\times (0,T^*))$.
Hence, it's obvious that $w$ satisfies the local energy inequality \eqref{loc.W} for any $\varphi\in \mathcal{D}(\RR^3\times(0,T^*))$.  In conclusion, we have proved that $w$ is a energy solution of satisfying $(\text{W}1)$-$(\text{W}5).$

Next, we show the existence of singular points of local mild solution to \eqref{NS} with initial data in $F\dot{B}^{s_p}_{p,q}(\RR^3)$ with $1<p<{3\over 2}$, $1\leq q<\infty$.
\begin{proposition}\label{pro.sin}
Let $u\in C([0,T^*);F\dot{B}^{2-{3\over p}}_{p,q}(\RR^3))$ be the mild solution to \eqref{NS} with $1<p<{3\over 2}$, $1\leq q<\infty$, and $T^*$ denotes the maximal existence time. Suppose $T^*<\infty$, then $u$ has a singular point at $T^*$, that is, there exists $z_0=(x_0,T^*)$, such that $\|u\|_{L^{\infty}(Q_r(z_0))}=+\infty$ for any $r>0$.
\end{proposition}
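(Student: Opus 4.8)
\textbf{Proof proposal for Proposition \ref{pro.sin}.}
The plan is to argue by contradiction, using the Calder\'on splitting already constructed in Case 1 together with the $\varepsilon$-regularity criterion of Theorem \ref{regularity}. Recall from Case 1 that on $[0,T^*)$ one may write $u=v+w$, where $v$ is the mild solution of \eqref{NS} with subcritical data $v_0\in F\dot B^{s}_{\tilde p,\tilde q}(\RR^3)$; by Theorem \ref{well-sub}, Proposition \ref{pro.ana} and Remark \ref{rem.ana}, $v$ is smooth and bounded on $\RR^3\times[\delta_0,T']$ for every $\delta_0\in(0,T')$ with $T'>T^*$, so $v$ is regular at every point of $\RR^3\times\{T^*\}$. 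The field $w$ solves the perturbed system \eqref{W-2.sec5}, is a suitable weak solution in the sense of Definition \ref{def.suit}, satisfies the uniform energy bound \eqref{est.unif}, and its pressure $\bar P=\bar P_1+\bar P_2$ obeys \eqref{est.q1}--\eqref{est.q2}, whence $\bar P\in L^{3/2}(\RR^3\times(0,T^*))+L^2(\RR^3\times(0,T^*))$. Since $v$ is regular on $\RR^3\times\{T^*\}$, the solutions $u$ and $w=u-v$ have the same singular set at time $T^*$, so it suffices to produce a singular point of $w$ at $T^*$.

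The first step is to confine the singular set of $w$ at time $T^*$ to a compact ball. From \eqref{est.unif}, the Sobolev embedding $\dot H^1(\RR^3)\hookrightarrow L^6(\RR^3)$ and interpolation, one has $w\in L^{10/3}(\RR^3\times(0,T^*))$, while $\bar P_1\in L^{3/2}$ and $\bar P_2\in L^2$ globally. Fix a small $R_1>0$ with $R_1^2<T^*$, so that $Q_{R_1}(x_0,T^*)\subset\RR^3\times(\delta_0,T^*)$ with $\delta_0=T^*-R_1^2>0$, on which $v$ is bounded; then by H\"older's inequality
\[
R_1^{-2}\int_{Q_{R_1}(x_0,T^*)}\big(|w|^3+|\bar P|^{3/2}\big)\,\mathrm{d}x\mathrm{d}\tau
\lesssim R_1^{-2}\Big(|Q_{R_1}|^{1/10}\|w\|_{L^{10/3}(Q_{R_1}(x_0,T^*))}^{3}
+\|\bar P_1\|_{L^{3/2}(Q_{R_1}(x_0,T^*))}^{3/2}
+|Q_{R_1}|^{1/4}\|\bar P_2\|_{L^2(Q_{R_1}(x_0,T^*))}^{3/2}\Big),
\]
and, $R_1$ being fixed, the right-hand side tends to $0$ as $|x_0|\to\infty$ because it is a tail of globally integrable quantities. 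Since $v$ is bounded on each such cylinder, $v\in L^p_tL^q_x(Q_{R_1}(x_0,T^*))$ with admissible exponents $2/p+3/q<1$, $q>3$, and with norm bounded uniformly in $x_0$. Hence Theorem \ref{regularity} applies uniformly: there is $R_0>0$ such that every $(x_0,T^*)$ with $|x_0|>R_0$ is a regular point of $w$, with moreover $\|w\|_{L^\infty(Q_{R_1/2}(x_0,T^*))}\le C^*R_1^{-1}$ for all such $x_0$. Consequently the singular set of $w$ at time $T^*$ lies in $\overline{B_{R_0}}\times\{T^*\}$.

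Now suppose, for contradiction, that $w$ (equivalently $u$) has no singular point at $T^*$. Then $w$ is regular at every point of the compact set $\overline{B_{R_0}}\times\{T^*\}$; covering it by finitely many parabolic cylinders on which $w$ is essentially bounded and combining with the uniform bound $\|w\|_{L^\infty(Q_{R_1/2}(x_0,T^*))}\le C^*R_1^{-1}$ for $|x_0|>R_0$, we obtain $\delta>0$ with $w\in L^\infty(\RR^3\times(T^*-\delta,T^*))$, hence $u=v+w\in L^\infty(\RR^3\times(T^*-\delta,T^*))$. Since on $\RR^3\times(T^*-\delta,T^*)$ the equation for $w$ is a parabolic system with smooth coefficient $v$ and bounded data, classical parabolic regularity upgrades $w$ (hence $u$) to a smooth, uniformly bounded solution on $\RR^3\times(T^*-\delta/2,T^*)$; together with the $L^2$-control of $w$ from \eqref{est.unif} and the subcritical Fourier--Herz control of $v$, the smoothing effect of the heat semigroup in Duhamel's formula for $u$ then shows that $u(\cdot,t)$ converges in $F\dot B^{s_p}_{p,q}(\RR^3)$ as $t\to T^{*-}$ and that $\|u\|_{\widetilde L^{r}([0,T^*-\delta'];\,F\dot B^{\frac2r+s_p}_{p,q})}$ stays bounded as $\delta'\to0+$. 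This contradicts the blow-up criterion \eqref{blow} of Theorem \ref{well-cri}, i.e. contradicts the maximality of $T^*$. Therefore $u$ must have a singular point at $T^*$.

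The main obstacle is the second step: verifying the $\varepsilon$-regularity smallness condition on far-away cylinders. This forces one to split the nonlocal pressure and check that the local $L^{3/2}$ and $L^2$ norms of $\bar P$, as well as the local $L^{10/3}$ norm of $w$, all have vanishing spatial tails --- which is exactly where the global energy bound \eqref{est.unif} and the pressure estimates \eqref{est.q1}--\eqref{est.q2} enter --- and, crucially, to ensure that the threshold $\epsilon_0=\epsilon_0(p,q,\|v\|_{L^p_tL^q_x})$ in Theorem \ref{regularity} can be chosen uniformly over these cylinders, which uses the uniform-in-$x_0$ bound on $\|v\|_{L^p_tL^q_x(Q_{R_1}(x_0,T^*))}$ coming from the regularity of $v$ away from $t=0$.
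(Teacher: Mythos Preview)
Your overall strategy matches the paper's: split $u=v+w$ with $v$ subcritical and regular past $T^*$, confine the singular set of $w$ at time $T^*$ to a compact ball via the $\varepsilon$-regularity criterion, and then derive a contradiction with the blow-up criterion \eqref{blow}. Two points of departure are worth noting.

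\medskip
\textbf{Different splitting and spatial confinement.} The paper does \emph{not} recycle the interpolation-based splitting of Case~1; instead it takes a time $t_0\in(0,T^*)$, uses that $u(t_0)\in FL^p$ (Remark~\ref{rem.ana}), and performs a Fourier-amplitude truncation $u(t_0)=\mathcal F(\hat u(t_0)I_{\{|\hat u(t_0)|>\lambda\}})+\mathcal F(\hat u(t_0)I_{\{|\hat u(t_0)|\le\lambda\}})$, giving $v_0\in FL^p\simeq F\dot B^0_{p,p}$ and $w_0\in L^2$. For the spatial confinement the paper invokes a local-energy decay estimate in $L^2_{\rm uloc}$ (as in \cite[Prop.~33.2]{LRbook}) to get smallness far out. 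Your route---vanishing spatial tails of $\|w\|_{L^{10/3}}$, $\|\bar P_1\|_{L^{3/2}}$, $\|\bar P_2\|_{L^{2}}$ coming directly from global integrability \eqref{est.unif}, \eqref{est.q1}--\eqref{est.q2}, together with a uniform bound on $\|v\|_{L^p_tL^q_x(Q_{R_1}(x_0,T^*))}$---is correct and in fact more elementary in the present $L^2$ (not merely $L^2_{\rm uloc}$) setting.

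\medskip
\textbf{The final step needs the explicit Fourier--Herz computation.} Your last paragraph is the weak point. Once $w\in L^\infty(\RR^3\times(T^*-\delta,T^*))$, ``classical parabolic regularity'' and an unspecified ``smoothing effect'' do not by themselves yield control of $u$ in the Chemin--Lerner norm needed for \eqref{blow}; smoothness on a slab does not automatically give membership in $\widetilde L^r F\dot B^{\frac2r+s_p}_{p,q}$. The paper closes this precisely: it first records that $v\in \widetilde L^2((T^*-\delta,T^*);F\dot B^{s_p+1}_{p,q})$ (using the subcritical regularity of $v$), then writes the Duhamel representation of $w$ from time $T^*-\delta$, splits into low/high frequencies, and bounds
\[
\|w^l\|_{\widetilde L^2((T^*-\delta,T^*);F\dot B^{s_p+1}_{p,q})}\lesssim \delta^{1/2}\|w\|_{L^\infty_tL^2_x},\qquad
\|G^h\|\lesssim \|w(T^*-\delta)\|_{FL^p},
\]
together with explicit bilinear estimates on $B^h(w,w)$ and $L^h(w)$ that use only $\|w\|_{L^\infty_{t,x}}$, $\|\nabla w\|_{L^2_tL^2_x}$ and the boundedness of $v,\nabla v$ on the slab. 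Summing gives $u=v+w\in \widetilde L^2((T^*-\delta,T^*);F\dot B^{s_p+1}_{p,q})$, contradicting \eqref{blow}. You should replace the hand-wave by this (or an equivalent) quantitative computation; in particular you need to check that, with your splitting, the linear data $w(T^*-\delta)$ lies in a space (e.g.\ $FL^p$) that controls the high-frequency linear evolution, and that $v$ itself lands in $\widetilde L^2((T^*-\delta,T^*);F\dot B^{s_p+1}_{p,q})$.
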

\begin{proof}
According to Remark \ref{rem.ana}, we know that there exists a $0<t_0<T^*$ such that $u(t_0)\in FL^p(\RR^3)$. Then, we decompose $u(t_0)$ as follows:
\[u(t_0)=\mathcal{F}(\hat{u}(t_0)I_{\{|\hat{u}(t_0)|> \lambda\}})+\mathcal{F}(\hat{u}(t_0)I_{\{|\hat{u}(t_0)|\leq \lambda\}})\triangleq v_0+w_0,\]
where $\lambda$ is a real number to be fixed later.

By a simple calculation, one has
  $v_0\in FL^p(\RR^3)$, $w_0\in L^2(\RR^3)$.
Thanks to \eqref{orth}, there exist $C_1>0$ and $C_2>0$ such that
$$C_1\|v_0\|_{F\dot{B}^0_{p,p}}\leq\|v_0\|_{FL^p}\leq C_2\|v_0\|_{F\dot{B}^0_{p,p}}.$$
Therefore, by Theorem \ref{well-sub}, there
exists a mild solution $v\in C([t_0,T'],FL^p)$ of system \eqref{NS} corresponding  to the initial
data $v(t_0)=v_0$. From Proposition \ref{pro.ana} and Remark \ref{rem.ana}, we have
\[\sup_{0\leq t< T }t^{\frac \alpha 2+\frac{3}{2\eta}-\frac 3{2p}}\|\nabla^{\alpha}
   v(t)\|_{FL^{\eta}}<\infty,\quad \forall (\alpha,\eta)\in \{0,1\}\times [1,p].\]
Since $T'$ depend on $\|v_0\|_{FL^p}$,   we choose $\lambda$ large enough such that $\|v_0\|_{FL^p}$ is small and then $T'>T^*$. Thus, we can deduce that for any $0<\delta<T^*$,
\begin{align*}
&\|v\|^q_{\tilde{L}^{2}((T^*-\delta,T^*);\,F\dot{B}^{s_p+1}_{p,q})}\\
=&\sum_{j\leq 0}2^{jq(3-{3\over p})}\|\dot{\Delta}_j v\|^q_{L^2((T^*-\delta,T^*);\,FL^p)}+\sum_{j> 0}2^{jq(3-{3\over p})}\|\dot{\Delta}_j v\|^q_{L^2((T^*-\delta,T^*);\,FL^p)}\\
\lesssim& \sum_{j\leq 0} 2^{jq(3-{3\over p})}\delta^{{q\over 2}}\|v\|^q_{L^{\infty}((T^*-\delta,T^*);\,FL^p)}+\sum_{j> 0}2^{jq(2-{3\over p})}\delta^{{q\over 2}}\|\nabla v\|^q_{L^{\infty}((T^*-\delta,T^*);\,FL^p)}<\infty.
\end{align*}
Let $w=u-v$ and $\bar{P}$ is the associated pressure. In the same way as used in the first part of this section, we know $w$ is a local energy solution of the following equations \begin{equation}\label{W-W2}
\left\{\begin{array}{ll}
\partial_t w-\nu\Delta w+w\cdot\nabla w+v\cdot\nabla w+w\cdot\nabla v+\nabla \bar{P}=0,\quad(x,t)\in\RR^3\times(t_0,T^*),\\
\Div w=0,\\
 w(x,t_0)=w(t_0),
\end{array}\right.
\end{equation}
where $v\in L^{\infty}((0,T^{'});\,FL^p)$  , and $w,\,\bar{P}$ satisfy the following estimates:
\begin{align*}
&\sup_{t_0\leq t<T^*}\|w\|_{L^2(\RR^3)}+\|\nabla w\|_{L^2(\RR^3\times(t_0,T^*))}+\|\bar{P}\|_{(L^2+L^{{3\over 2}})(\RR^3\times (t_0,T^*))}\\
\leq& C(\nu,\|w_0\|_{L^2(\RR^3)},T^*,\|v\|_{L^{\infty}([t_0,T^*];\,FL^p(\RR^3))}).
\end{align*}
By the same argument used in \cite[Proposition 33.2]{LRbook}, we can show that
\begin{equation*}
\begin{split}
&\|w(t)\|^2_{L^2_{\rm uloc}(\RR^3\setminus B_{2R}(0))}+\int_0^t\|\nabla w(t')\|^2_{L^2_{\rm uloc}(\RR^3\setminus B_{2R}(0))}\,\mathrm{d}t'\\
\leq& C(\nu,\|w_0\|_{L^2(\RR^3)},T^*,\|v\|_{L^{\infty}([t_0,T^*];\,FL^p(\RR^3))})\left(\|w_0\|^2_{L^2_{\rm uloc}(\RR^3\setminus B_{2R}(0))}+\frac{1+\log R}{R}\right).
\end{split}
\end{equation*}
So, we conclude that there exists a $R>0$, such that for any $|x_0|>R$,
\[R_0^{-2}\int_{Q_{R_0}(x_0,T^*)}\big(|w|^3+|\bar{P}|^{{3\over 2}}\big)\,\mathrm{d}x\mathrm{d}t<\epsilon_0,\]
where $R_0=\sqrt{2\delta}$ and $\epsilon_0$ is the constant in Theorem \ref{regularity}. Since $(w,\bar{P})$ is a suitable weak solution to equations \eqref{W-2} with $v\in L^{\infty}_tL^{{p\over p-1}}_x(Q_{{R_0}}(x_0,T^*))$ in $Q_{R_0}(x_0,T^*)$ and by $\varepsilon$-regularity criterion in Theorem \ref{regularity}, we have $w$ is bounded in $(\RR^3\setminus B_R(0))\times [{T^*-\delta},T^*]$.

Suppose Proposition \ref{pro.sin} is false. Then $u$ is bounded in a neighborhood of any point in $\RR^3\times [T^*-\delta, T^*]$. Since $v\in L^{\infty}(\RR^3\times [{T^*-\delta}, T^*])$. Thus, $w$ is bounded in a neighborhood of any point in $\RR^3\times [{T^*-\delta}, T^*]$. By Vitali covering theorem, we know that $w$ is bounded in $B_R(0)\times [{T^*-\delta}, T^*]$. This shows that $w\in L^{\infty}(\RR^3\times [{T^*-\delta}, T^*])$. Combining this fact with $w\in L^{\infty}((0,T^*);L^2)\cap L^2((0,T^*);\dot{H}^1)$ induces us to claim that $$w\in \tilde{L}^2((T^*-\delta,T^*);F\dot{B}^{s_p+1}_{p,q}(\RR^3)).$$

Setting $\displaystyle w=\sum_{j\leq 0}\dot{\Delta}_jw+\sum_{j>0}\dot{\Delta}_j w\triangleq w^l+w^h$, we can easily get that
\begin{align*}
\|w^l\|_{\tilde{L}^2((T^*-\delta,T^*);\,F\dot{B}^{1+s_p}_{p,q})}\lesssim \|w^l\|_{\tilde{L}^2((T^*-\delta,T^*);\,F\dot{B}^{{3\over 2}}_{2,q})}\lesssim \delta^{{1\over 2}}\|w\|_{L^{\infty}([t_0,T^*];\,L^2)}.
\end{align*}
Thus, we only need to prove $w^h\in \tilde{L}^2((T^*-\delta,T^*);F\dot{B}^{s_p+1}_{p,q}(\RR^3))$.

We rewrites
\begin{align*}
w(x,t)=&e^{\nu (t-T^*+\delta)\Delta}w(T^*-\delta)+\int^t_{T^*-\delta} e^{\nu(t-\tau)\Delta}\mathbb{P}(w\cdot\nabla w)\,\mathrm{d}\tau\\
&+\int^t_{T^*-\delta} e^{\nu(t-\tau)\Delta}\mathbb{P}(w\cdot\nabla v+v\cdot\nabla w)\,\mathrm{d}\tau\\
\triangleq& G+B(w,w)+L(w),
\end{align*}
and $$w^h(x,t)=\sum_{j>0}\dot{\Delta}_j G+\sum_{j>0}\dot{\Delta}_j B(w,w)+\sum_{j>0}\dot{\Delta}_j L(w)\triangleq G^h+B^h(w,w)+L^h(w).$$
For $G^h$, by \eqref{est.TS}, we get
\[\|G^h\|_{\widetilde{L}^2((T^*-\delta,T^*);\,F\dot{B}^{s_p+1}_{p,q})}\leq \bigg(\sum_{j\geq -1}2^{jq(2-{3\over p})}\bigg)^{{1\over q}}\|w(T^*-\delta)\|_{FL^p}<\infty.\]
For
$B^h(w,w)$, we have
\begin{align*}
&\|B^h(w,w)\|_{\tilde{L}^2((T^*-\delta,T^*);\,F\dot{B}^{s_p+1}_{p,q})}\\
\lesssim& \sum_{j\geq -1}2^{{3\over 2}jq}\|\dot{\Delta}_jB(w,w)\|^q_{L^2((T^*-\delta,T^*);\,L^2)}\\
\lesssim &\sum_{j\geq -1}2^{{3\over 2}jq}\Big\|\int^t_{T^*-\delta}e^{-\nu(t-\tau)2^{2j}}\|w(\tau)\|_{L^{\infty}}\| \nabla w(\tau)\|_{L^2}\,\mathrm{d}\tau\Big\|^q_{L^2((T^*-\delta,T^*)}\\
\lesssim& \sum_{j\geq -1}2^{-{1\over 2}jq}\|w\|^q_{L^{\infty}(\RR^3\times(T^*-\delta,T^*))}\| \nabla w\|^q_{L^2((T^*-\delta,T^*);\,L^2)}<\infty.
\end{align*}
Similarly, we have by the H\"older inequality that
\begin{align*}
 \|L^h(w)\|_{\tilde{L}^2((T^*-\delta,T^*);\,F\dot{B}^{s_p+1}_{p,q})}
\lesssim &\delta^{{q\over 2}}\|\nabla v\|^q_{L^{\infty}(\RR^3\times(T^*-\delta,T^*))}\| w\|^q_{L^{\infty}((T^*-\delta,T^*);\,L^2)}\\&+\|v\|^q_{L^{\infty}(\RR^3\times(T^*-\delta,T^*))}\| \nabla w\|^q_{L^2((T^*-\delta,T^*);\,L^2)}<\infty.
\end{align*}
Collecting above estimates, we get that $w\in \tilde{L}^2((T^*-\delta,T^*);F\dot{B}^{s_p+1}_{p,q}(\RR^3))$. This, together with $v\in \tilde{L}^2((T^*-\delta,T^*);F\dot{B}^{s_p+1}_{p,q}(\RR^3))$, gives $u\in \tilde{L}^2((T^*-\delta,T^*);F\dot{B}^{s_p+1}_{p,q}(\RR^3))$. According the blow-up criterion in Theorem \ref{blow}, $u$ can be extend beyond to $t=T^*$, which gives a contradiction. Then Proposition \ref{pro.sin} is proved.
\end{proof}
Now we come back to the proof of Theorem \ref{Thm}.

Since $\rho_{\max}<\infty$, we can choose a sequence $u^{(k)}_0\in F\dot{B}^{s_p}_{p,q}(\RR^3)$ such that $T^*(u^{(k)}_0)<\infty$ and $\|u^{(k)}_0\|_{F\dot{B}^{s_p}_{p,q}}\searrow \rho_{\max}$. According to Proposition \ref{pro.sin}, the mild solution $u^{(k)}$ associated to initial data $u^{(k)}_0$ has a singular point $(x^{(k)},T^*(u^{(k)}_0))$. After the following scaling and translation
\[u^{(k)}_0(x)\to 2^{\lambda_k} u^{(k)}_0(2^{\lambda_k}(x-x^{(k)}))\quad\text{and}\quad
u^{(k)}(x,t)\to 2^{\lambda_k} u^{(k)}(2^{\lambda_k}(x-x^{(k)}),2^{2\lambda_k} t)\]
with $\lambda_k\in\ZZ$, the singular point $(x^k,T^*(u^{(k)}_0))$ becomes $(0,2^{-2\lambda_k}T^*(u^{(k)}_0))$. We still denote the sequence after the above translation and scaling as $u^{(k)}$, so does $u^{(k)}_0$. Then we can choose a series $\lambda_k$ such that $T^*_k \triangleq 2^{-2\lambda_k}T^*(u^{(k)}_0)\in (\frac 12,1)$ for any $k\in\NN$. Thus, there exists a subsequence of $\lambda_k$, still denoted by $\lambda_k$, such that $T^*_k$ converge to a point $t^*$. Still denote the corresponding subsequence of $u^{(k)}$ by $u^{(k)}$, so does $u^{(k)}_0$. Thus, we can conclude that the mild solution $u^{(k)}$ associated to initial data $u^{(k)}_0$ has a singular point $(0,T^*_k)$ with $T^*_k\in(\frac 12,1)$ and $T^*_k\to t^*$ as $k\to\infty$.

According to $\|u^{(k)}_0\|_{F\dot{B}^{s_p}_{p,q}}\searrow \rho_{\max}$, we can easily get $\|u^{(k)}_0\|_{F\dot{B}^{s_p}_{p,q}}\leq M$ for some constant $M>0$. According to Lemma \ref{lem.decom}, for any $j\in\ZZ$, we can split $u^{(k)}_0$ into
 $v^{(k)}_0+w^{(k)}_0$ with $\Div v^{(k)}_0=\Div w^{(k)}_0=0$,
\begin{equation}\label{u.v_0}
\|v^{(k)}_0\|_{F\dot{B}^{s}_{\tilde{p},\tilde{q}}}\leq C2^{-j\theta}\|u^{(k)}_0\|_{F\dot{B}^{s_p}_{p,q}}\leq C2^{-j\theta}M
\end{equation}
and
\begin{equation}\label{u.w_0}
\|w^{(k)}_0\|_{L^2}\leq C2^{j(1-\theta)}\|u_0\|_{F\dot{B}^{s_p}_{p,q}}\leq C2^{j(1-\theta)}M,
\end{equation}
where $s_p,\,s,\,p,\,q,\,\tilde{p},\,\tilde{q}$ satisfy \eqref{RC}.
Moreover, for suitable larger $j$, $u^{(k)}$ can be decomposed as $u^{(k)}=v^{(k)}+w^{(k)}$, where $v^{(k)}$ is a unique local mild solution to \eqref{NS} established in Theorem~\ref{well-sub} stating from $v^{(k)}_0$ on $\RR^3\times (0,1)$ and $(w^{(k)},\bar{P}^{(k)})$ is the   energy weak solution of equations \eqref{W-2} on $\RR^3\times (0,T^*_k)$ with $v$ replaced by $v^{(k)}$ associated to initial data $w^{(k)}_0$. Due to Remark \ref{rem.ana}, $v^{(k)}$ satisfies for any $(\alpha,\eta)\in \{0,1\}\times [1,p]$,
\begin{equation}\label{eq.V1}
\sup_{0\leq t\leq 1 }t^{\frac \alpha 2-\frac s2+\frac{3}{2\eta}-\frac 3{2\tilde{p}}}\|\nabla^{\alpha} v^{(k)}\|_{FL^{\eta}}\leq C(M,j).
\end{equation}
Form \eqref{eq.V1}, we can easily get that for any multiindex $\alpha$ satisfying $|\alpha|=0,1,2$, $D^{\alpha} v^{(k)}$ is uniformly bounded in $L^{\infty}([\delta,1];L^{\infty}(\RR^3))$ for any $0<\delta <1$ and $v^{(k)}$ is uniformly bounded in $L^{r}([0,1];\,L^{{\tilde{p}\over {\tilde{p}}-1}}(\RR^3)))^2$ with ${2\over r}+{3(\tilde{p}-1)\over \tilde{p}}=1$. This, together with the facts that $v^{(k)}$ satisfies
\[\partial_t v^{(k)}-\nu\Delta v^{(k)}+\mathbb{P}(v^{(k)}\cdot\nabla v^{(k)})=0\]
on $(0,1)\times \RR^3$, entails that
$$\|\partial_t v^{(k)}\|_{L^{\infty}([\delta,1];\,L^{\infty})}\leq C(M,j).$$
By Ascoli-Arzela theorem, there exist a subsequence of $v^{(k)}$, still denoted by $v^{(k)}$, and $v$ such that
\[v^{(k)}\overset{*}\rightharpoonup v\,\text{ in }\,\hat{K}^{s}_{\tilde{p}}(1),\quad v^{(k)}\rightarrow v\text{ in } C_{\rm loc}(\RR^3\times (0,1]),\quad
v^{(k)}_0\rightharpoonup v_0\,\text{ in }\,F\dot{B}^s_{\tilde{p},\tilde{q}}(\RR^3).\]
In addition, we have
\[\|\nu\Delta v^{(k)}\|_{L^r([0,1];W^{-2,{\tilde{p}\over {\tilde{p}}-1}})}+\|\mathbb{P}(v^{(k)}\cdot\nabla v^{(k)})\|_{L^{{r\over 2}}([0,1];W^{-1,{2\tilde{p}\over \tilde{p}-1}})}\leq C,\]
which implies
\[\int_{\RR^3}v^{(k)}(t)\varphi\,\mathrm{d}x \longrightarrow \int_{\RR^3}v(t)\varphi\,\mathrm{d}x\quad\text{in} \quad C([0,1]),\]
for any $\varphi\in \mathcal{D}(\RR^3)$.
According to the above convergence, we can say that $v\in \hat{K}^{s}_{\tilde{p}}(1)$ is a unique mild solution of   equations \eqref{NS} on $\RR^3\times (0,1)$ with initial data $v_0\in F\dot{B}^s_{\tilde{p},\tilde{q}}(\RR^3)$.

By Proposition \ref{pro.uni'}, there exists a local energy solution $(\tilde{w}^{(k)}, \tilde{P}^{(k)})$ to equations \eqref{W-2} on $\RR^3\times(0,1)$ with initial data $w^{(k)}_0$ associated to $v^{(k)}$, satisfying  $w^{(k)}=\widetilde{w}^{(k)}$ on $[0,T^k]$. By using estimates \eqref{u.w_0} and \eqref{eq.V1}, we can easily get
\[\sup_{0\leq t\leq 1}\|w^{(k)}(t)\|^2_{L^2}+\nu\int^{1}_0\|\nabla w^{(k)}\|^2_{L^2}\,\mathrm{d}\tau+\|\tilde{P}^{(k)}\|_{L^2(\RR^3\times (0,1))+L^{3/2}(\RR^3\times (0,1))}\leq C(M,j).\]

From equations \eqref{W-2}, we can get that $\{\partial_t w^{(k)}\}_{k}$ is uniformly bounded in $L^{3/2}((0,1);H^{-2})$. Moreover, for any $r>0$,
\[H^1_x(B_r)\hookrightarrow\hookrightarrow L^{3/2}_x(B_r)\hookrightarrow H^{-2}_x(B_r)\]
and $H^1_x(B_1)$ is reflexive. Thus, by Aubin-Lions Lemma in Chapter 5 of Seregin \cite{Se15}, we obtain that $\{w^{(k)}\}_k$ is compact in $L^{3/2}(B_r\times (0,1))$ for any $r>0$. By interpolation between $L^{\infty}((0,1);L^2(B_r)$ and $L^2((0,1);\dot{H}^1(B_r))$, we have $\{w^{(k)}\}_k$ is bounded in $L^{10/3}(B_r\times (0,1))$. Hence, we conclude that $w^{(k)}$ is compact in $L^3(B_r\times (0,1))$. Therefore, there exists a subsequence of $(\tilde{w}^{(k)},\tilde{P}^{(k)}, v^{(k)})$, still denoted by $(\tilde{w}^{(k)},\tilde{P}^{(k)}, v^{(k)})$, and a suitable weak solution $(w,\bar{P})$ to equations \eqref{W-2} associated to $v$ such that
\begin{itemize}
  \item in $\tilde{w}^{(k)}\overset{*}\rightharpoonup w$\text{ in }$L^{\infty}([0,1];L^2)$ and $\tilde{w}^{(k)}\rightharpoonup w$ in $L^2((0,1);\dot{H}^1)$
  \item $\tilde{w}^{(k)}\rightharpoonup w_0$ in $L^2$
  \item $\tilde{w}^{(k)}\rightarrow w$ in $L^3_{\rm loc}(\RR^3\times [0,1])$ and $\tilde{P}^{(k)}\rightharpoonup \bar{P}$ in $L^{{3\over 2}}(\RR^3\times [0,1])+L^{2}(\RR^3\times [0,1])$
  \item $w$ is weakly continuous in $L^2$ with respect to $t\in [0,1]$.
\end{itemize}
Whit the above convergence, one can easily deduce that $(w, \bar{P})$ is a suitable weak solution to \eqref{eq.W'} associated to $v$ on $\RR^3\times (0,1)$. By stability of singularity in Proposition \ref{pro.stability}, we can say $w$ has a singular point $(0,t^*)$. In addition, by Proposition \ref{pro.sma}, there exists a $0<\gamma<{s-s_{\tilde{p}}\over 2\theta}$ such that for any $0<t<<1$,
\[\|\tilde{w}^{(k)}-e^{\nu\Delta t}w^{(k)}_0\|_{L^2}\leq C(M)\Big(t^{\gamma(1-\theta)}+t^{\frac{s-s_{\tilde{p}}-\gamma\theta}{2}}\Big).\]
So, we have that for any $0<t<<1$
\[\|w(t)-e^{\nu\Delta t}w_0\|_{L^2}\leq  \liminf_{k\to\infty}\|\tilde{w}^{(k)}(t)-e^{\nu\Delta t}w^{(k)}_0\|_{L^2}\leq C(M)\Big(t^{\gamma(1-\theta)}+t^{\frac{s-s_{\tilde{p}}-\gamma\theta}{ 2}}\Big),\]
which implies that $\displaystyle \lim_{t\to 0+}\|w(t)-w_0\|_{L^2}=0$.
This together with the above convergence enables us to conclude  that $w$ is a local energy solution to equations \eqref{W-2} on $\RR^3\times (0,1)$.

Let $u\in C([0,T^*);F\dot{B}^{s_p}_{p,q})$ and $\tilde{v}\in C([0,T_2);F\dot{B}^{s}_{\tilde{p},\tilde{q}})$, be the unique mild solution established in Theorem \ref{well-cri} or Theorem \ref{well-sub} starting from $u_0$ and $v_0$, respectively. By the uniqueness, we have $v=\tilde{v}$ on $\RR^3\times (0,1)$ and $T_2\geq 1$. Set $\tilde{w}=u-v$, we have by Proposition \ref{pro.uni'} that $\tilde{w}=w$ on $\RR^3\times (0,\min\{T^*,1\})$ which implies $u=v+w$ on $\RR^3\times(0,\min\{T^*,1\})$. Since $w$ is singular at $t^*<1$, one has $T^*\leq t^*<1$. Relying on the definition of $\rho_{\max}$, we have that
$$\rho_{\max}\leq \|u_0\|_{F\dot{B}^{s_p}_{p,q}}\leq \liminf_{k\to\infty}\|u^{(k)}_0\|_{F\dot{B}^{s_p}_{p,q}}\leq \rho_{\max}.$$
Thus $\|u_0\|_{F\dot{B}^{s_p}_{p,q}}=\rho_{\max}$ and $\|u^{(k)}_0\|_{F\dot{B}^{s_p}_{p,q}}\rightarrow \|u_0\|_{F\dot{B}^{s_p}_{p,q}}$. This, together with the fact that $F\dot{B}^{s_p}_{p,q}(\RR^3)$ is a uniformly convex Banach space, gives $u^{(k)}_0\rightarrow u_0$ in $F\dot{B}^{s_p}_{p,q}(\RR^3)$.\medskip

\noindent\textbf{Case 2:} ${2/3}\leq p\leq \infty$ and $2< q<\infty$.

Since $q>2$, by Lemma \ref{lem.inter}, for any $j\in \ZZ$, we can split $u_0\in F\dot{B}^{2-{3\over p}}_{p,q}(\RR^3)$ into $f_0+g_0$ satisfying $\Div f_0=\Div g_0=0$ and
\[\|f_0\|_{F\dot{B}^{2-{3\over p}}_{p,2}}\leq C2^{-j\theta}\|u_0\|_{F\dot{B}^{2-{3\over p}}_{p,q}},\quad \|g_0\|_{F\dot{B}^{2-{3\over p}}_{p,q_1}}\leq C2^{j(1-\theta)}\|u_0\|_{F\dot{B}^{2-{3\over p}}_{p,q}},\]
where
$\frac1q=\frac\theta {q_1}+\frac{1-\theta}{2}=1$ and $\theta\in (0,1)$.

From Theorem \ref{well-cri}, we can choose a $j\in\ZZ$ small enough such that there exists a unique global in time mild solution $g\in C([0,\infty);\,F\dot{B}^{2-{3\over p}}_{p,q_1})\cap \widetilde{L}^{r}([0,\infty);\,F\dot{B}^{2-{3\over p}+{2\over r}}_{p,q_1})$ with $ r\in[1,\infty)$ to \eqref{NS} starting from $g_0$. Thanks to $F\dot{B}^{2-{3\over p}}_{p,2}(\RR^3)\hookrightarrow F\dot{B}^{2-{3\over p}}_{p,q}(\RR^3)\hookrightarrow F\dot{B}^{2-{3\over p}}_{p,q_1}(\RR^3)$ and Lemma \ref{regul}, we can deduce that
$$f\triangleq NS(u_0)-g\in C\Big([0,T^*(u_0));\,F\dot{B}^{2-{3\over p}}_{p,2}\Big)\cap \widetilde{L}^{r}_{\rm loc}\Big([0,T^*(u_0));\,F\dot{B}^{2-{3\over p}+{2\over q}}_{p,2}\Big),\quad \forall  r \in(1,\infty),$$
satisfies the following equations:
\begin{equation}\label{f}
\left\{\begin{array}{ll}
\partial_t f-\nu\Delta f+f\cdot\nabla f+f\cdot\nabla g+g\cdot\nabla f+\nabla Q=0,\\
\Div f=0,\\
f(x,0)=f_0.
\end{array}\right.
\end{equation}
Note that $T^*(u_0)$ is also the maximal existence time of $f$. To prove Theorem \ref{Thm}, we only need to prove the existence of minimal blow-up initial data to problem \eqref{f} in $F\dot{B}^{2-\frac3p}_{p,2}(\RR^3)$ with $p\geq 3/2$.

Following the method developed by Gallagher and Planchon \cite{GP02}, and using Bony-paraproduct decomposition, we  have
\[(a,b,c)\in E\times E\times L^q\Big((0,T);\,F\dot{B}^{2-\frac3p}_{p,q}(\RR^3)\Big)\longmapsto \int^T_0\int_{\RR^3} (a\cdot\nabla b)\cdot c\,\mathrm{d}x\mathrm{d}\tau,\]
 is continuous for any $p\geq 3/2$ and $1\leq q<\infty$. Then, we can translated the existence of minimal
 blow-up initial data to problem \eqref{f} in $F\dot{B}^{2-\frac3p}_{p,2}(\RR^3)$ into the  existence of minimal initial data to \eqref{NS} in $F\dot{B}^{2-\frac3p}_{p,2}(\RR^3)$.

Hence, it is suffices to show Theorem \ref{Thm} in $F\dot{B}^{2-\frac{3}{p}}_{p,2}(\RR^3)$. By embedding theorem, one has
\[F\dot{B}^{2-\frac{3}{p}}_{p,2}(\RR^3)\hookrightarrow L^2_{\rm uloc}(\RR^3).\]
By Remark \ref{rem.uloc} and existence of singularities, we can get the desired result by repeating the same process as used in Case 1 without decomposition. \medskip

\noindent\textbf{Case 3:} ${2/3}\leq p\leq \infty$ and $1\leq q\leq2.$

Since $1\leq q\leq2,$ by embedding theorem, we immediately have
\[F\dot{B}^{2-\frac{3}{p}}_{p,q}(\RR^3)\hookrightarrow L^2_{\rm uloc}(\RR^3).\]
By Remark \ref{rem.uloc} again and existence of singularities, we can get the required  result by Mimicking  the proof in Case 1 without decomposition.

 Theorem \ref{Thm} is thus proved.\qed



\section*{Acknowledgments}
  This work is supported in part by the National Natural Science Foundation of China
 under grant  No.11671047, and No.11501020.


\end{document}